\def\mytopsep{3mm}
\newtheoremstyle{myplain}{\mytopsep}{\mytopsep}{\itshape}{0pt}{\bfseries}{.}{3mm}{}
\newtheoremstyle{mydefinition}{\mytopsep}{\mytopsep}{\normalfont}{0pt}{\bfseries}{.}{3mm}{}
\newtheoremstyle{myremark}{\mytopsep}{\mytopsep}{\normalfont}{0pt}{\bfseries}{.}{3mm}{}
\theoremstyle{myplain}
\newtheorem{thm}{Theorem}[section]
\newtheorem{cor}[thm]{Corollary}
\newtheorem{lem}[thm]{Lemma}
\newtheorem{prop}[thm]{Proposition}
\theoremstyle{mydefinition}
\newtheorem{dfn}[thm]{Definition}
\theoremstyle{myremark}
\numberwithin{equation}{section}
\newcommand\mc{\mathscr}
\newcommand\wt{\widetilde}
\newcommand\inverse{\langle-1\rangle}
\newcommand{\aatop}[2]{\genfrac{}{}{0pt}{}{#1}{#2}} 
\newcommand\edge[2]{\{#1,#2\}} 
\DeclareMathOperator{\fix}{fix}
\DeclareMathOperator{\lcm}{lcm} 
\begin{document}
\title{Enumeration of Point-Determining Graphs}
\author{Ira M. Gessel\footnote{{\it Email address}: gessel@brandeis.edu}\\{\small\it Department of Mathematics, Brandeis University, MS 050, Waltham, MA 02454-9110}\\[5pt]
Ji Li\footnote{{\it Email address}: jli@math.arizona.edu}\thanks{This work is supported in part by the National Science Foundation Grant Number  DUE-0634532.}\\{\small\it Department of Mathematics, The University of Arizona, 617 N. Santa Rita Ave., Tucson, AZ 85721-0089}}
\date{\small \today}
\maketitle
\begin{abstract}
Point-determining graphs are graphs in which no two vertices have the same neighborhoods, co-point-determining graphs are those whose complements are point-determining, and bi-point-determining graphs are those both point-determining and co-point-determining. Bicolored point-determining graphs are point-determining graphs whose vertices are properly colored with white and black. We use the combinatorial theory of species to enumerate these graphs as well as the connected cases.
\end{abstract}

\setcounter{section}{-1}

\section{Introduction}

A \emph{point-determining graph} (also called a \emph{mating-type graph},  \emph{mating graph}, or \emph{M-graph}) is a graph in which no two vertices have the same neighborhood. The term ``point-determining" was introduced by Sumner~\cite{sumner}. If we start with any graph, and identify vertices with the same neighborhood, we obtain a point-determining graph. This was the motivation for their use by Bull and Pease~\cite{bull} to represent mating systems: Let each vertex of a graph denote an individual animal and let two vertices be joined if the two animals can mate. If two animals have identical compatibilities they are said to be of the same mating type, and in that case they don't need to be represented by different vertices. Thus we may represent animals with the same mating type by the same vertex, resulting in a graph in which no two vertices have identical neighborhoods. 

Point-determining graphs (both labeled and unlabeled) were counted by Read~\cite{read1}, 
using this reduction of arbitrary graphs to point-determining graphs, and we shall follow the same approach in this paper.

Complements of point-determining graphs, which we call co-point-determining graphs (they have also been called ``point-distinguishing"), are graphs in which no two vertices have the same closed neighborhood. (The closed neighborhood of a vertex is the vertex together with its neighborhood.) The enumeration of co-point-determining graphs is of course the same as the enumeration of point-determining graphs, but counting connected point-determining and co-point-determining graphs is different. Surprisingly, the number of unlabeled point-determining and co-point-determining graphs is the same, but for the labeled versions, the number of point-determining and co-point-determining graphs on $n$ vertices differ by $(n-1)!$.

Next we count graphs which are both point-determining and co-point-determining, which we call bi-point-determining. (They have also been called ``totally point-determining".) These graphs may also be characterized by the property that their automorphism groups contain no transpositions; i.e., they are not fixed by switching any pair of vertices. Just as an arbitrary graph can be reduced to a point-determining graph by identifying graphs with the same neighborhood, an arbitrary graph may be reduced to a bi-point-determining graph by a more complicated compression in which the fibers are cographs (graphs obtained from edgeless graphs by complementation and union).

We use the combinatorial theory of species~\cite{joyal, joyal2, species-book} as our framework for graphical enumeration. In section~\ref{sec-species}, we introduce some terminology and basic results of species theory. The superimposition of graphs is defined and related to composition of species  in Lemma~\ref{lem-super}. In section~\ref{sec-pd}, we enumerate point-determining graphs through a functional relation between the species of point-determining graphs and the well-known species
of graphs (Theorem~\ref{thm-pdcpd}), and examine the species of connected point-determining graphs and connected co-point-determining graphs (Theorem~\ref{thm-connpdcpd}). In section~\ref{sec-noend}, we describe connection between unlabeled connected point-determining graphs and unlabeled connected graphs without endpoints (Corollary~\ref{cor-noendpd}), which was previously studied in~\cite{goulden}, ~\cite{wright}, and~\cite{kilibarda}. In section~\ref{sec-bipd}, we find a functional relation between the species of bi-point-determining graphs and the species of graphs (Theorem~\ref{thm-bipd}). The enumeration of connected bi-point-determining graphs is carried out using virtual species (Corollary~\ref{cor-connbipd}). In section~\ref{sec-2color}, we examine the $2$-sort species of \emph{bicolored graphs} (Theorem~\ref{thm-2color}), which are graphs whose vertices are properly colored with white and black, and develop ways to enumerate the bicolored point-determining graphs and the connected ones (Theorem~\ref{thm-2colorpd},~\ref{thm-2colorps}).

A list of species covered in this paper is given in appendix~\ref{appen-index}. In appendix~\ref{appen-species} we list some computational results on the cycle indices and molecular decompositions of species.

\section{Combinatorial Species and Superimposition of Graphs}
\label{sec-species}

The combinatorial theory of species was initiated by Joyal in~\cite{joyal} and ~\cite{joyal2}. For detailed definitions and descriptions about species, readers are referred to~\cite{species-book}. A \emph{species} is a functor from the category of finite sets with bijections to itself. A species $F$ generates for each finite set $U$ a finite set $F[U]$, called the set of \emph{$F$-structures} on $U$, and for each bijection $\sigma: U \to V$ a bijection $F[\sigma]: F[U]\to F[V],$ called the \emph{transport of $F$-structures along $\sigma$}. The symmetric group $\mathfrak{S}_n$ acts on the set $F[n]=F[\{ 1,2,\dots,n\}]$ by transport of structures. The $\mathfrak{S}_n$-orbits under this action are called \emph{unlabeled} $F$-structures of order $n$.

Each species $F$ is associated with three generating series: the \emph{exponential generating series} $F(x)=\sum_{n\ge 0} |F[n]|{x^n}/{n!},$ the \emph{type generating series} $\wt{F}(x)=\sum_{n\geq 0} f_n\, x^n,$ where $f_n$ is the number of unlabeled $F$-structures of order $n$, and the \emph{cycle index} 
$$Z_F=Z_F\,(p_1,p_2,\dots)=\sum_{n\geq 0} \biggl(\sum_{\lambda \vdash n} \fix \, F[\lambda]\,\frac{p_{\lambda}} {z_{\lambda}} \biggr),$$
where $\fix \,F[\lambda]$ denotes the number of $F$-structures on $[n]=\{1,2,\dots,n\}$ fixed by $F[\sigma]$ for some $\sigma$ that is a permutation of $[n]$ with cycle type $\lambda$, $z_{\lambda}$ is the number of permutations in  $\mathfrak{S}_n$ that commute with a permutation of cycle type $\lambda$, and $p_{\lambda}$ is the \emph{power sum symmetric function} (see~\cite[p.~297]{ec2}) indexed by the partition $\lambda$ of $n$, defined by
\begin{align*}
p_n &= p_n[{\bf x}]=\sum_{i}x_i^n, \quad n \ge 1\\
p_{\lambda}&= p_{\lambda}[{\bf x}]=p_{\lambda_1}p_{\lambda_2}\dots=\prod_{k \ge 1} p_k^{c_k(\lambda)}, \text{ if }
\lambda=(1^{c_1(\lambda)}, 2^{c_2(\lambda)}, \dots),
\end{align*}
where $c_i(\lambda)$ is the number of parts of length $i$ in the partition $\lambda$, and hence $\sum_i ic_i(\lambda)=n$. The $p_\lambda$ form a basis for the ring of symmetric functions in the variables $x_1, x_2, \dots$. We can also define power sum symmetric functions in the variables $y_1, y_2, y_3, \dots$, written as $p_{\lambda}[{\bf y}]$, in a similar fashion.
 
The following identities \cite[p.~18]{species-book} illustrate the importance of the cycle index in the theory of species.
\begin{align}
F(x)&=Z_F(x, 0, 0, \dots), \notag \\
\wt{F}(x)&=Z_F(x, x^2, x^3, \dots). \notag
\end{align}

We may apply operations on species to generate new species, and the operations of species translate into operations of the generating series of species systematically. For details about operations of species, readers are referred to \cite[pp.~1--58]{species-book}. The species operations that are frequently used in this paper are the \emph{sum} $\Phi+\Psi$, the \emph{product} $\Phi  \Psi$ or $\Phi\cdot \Psi$, and the \emph{composition} $\Phi(\Psi)$ or $\Phi\circ \Psi$ of species $\Phi$ and $\Psi$. We recall here the definition of the composition of $\Phi$ and $\Psi$ \cite[p.~41]{species-book}: A $\Phi\circ\Psi$-structure on a finite set $U$ is a triple of the form $(\pi, f, \gamma)$, where $\pi$ is a partition of $U$, $f$ is a $\Phi$-structure on the set of blocks of $\pi$, and $\gamma=(\gamma_B)_{B\in\pi}$, where for each block $B$ of $\pi$, $\gamma_B$ is a $\Psi$-structure on $B$. The formulas for the associated series of $\Phi\circ\Psi$ are given by
\begin{align*}
(\Phi\circ \Psi)(x)&=\Phi(\Psi(x)),\\
(\wt{\Phi\circ \Psi})(x)&=Z_{\Phi}(\wt{\Psi}(x),\wt{\Psi}(x^2),\dots),\\
Z_{\Phi\circ \Psi}&=Z_{\Phi}\circ Z_{\Psi},
\end{align*}
where $\circ$ is the operation of \emph{plethysm} on symmetric functions (see~\cite[p.~447]{ec2}).

If $F$ is a species of structures, we denote by $F_n$, for a nonnegative integer $n$, the species of $F$-structures \emph{concentrated on the cardinality $n$} (see~\cite[p.~30]{species-book}), and by $F_{\ge n}$ the $F$-structures of cardinality at least $n$. Hence $F_{\ge n}=F_n+F_{n+1}+\cdots.$ We usually write $F_{\ge 1}$ as $F_+$.

A species $M$ is called a \emph{molecular species} (see~\cite{yeh1} and~\cite{yeh2}) if there is only one isomorphism class of $M$-structures. Thus a molecular species is one that is indecomposable under addition. Every species can be expressed uniquely as the sum of molecular species, and this expression is called its \emph{molecular decomposition} (see~\cite[p.~141]{species-book}).
 
Throughout this paper, we consider only simple graphs (without loops or multiple edges). A graph $G$ is thought of as an ordered pair $(V,E)$, where $V=V(G)$ is the vertex set of $G$, and $E=E(G)$ is the edge set of $G$, a set of 2-subsets of $V$. Two graphs are called \emph{disjoint} if they have no common vertices. An \emph{unlabeled graph} is formally defined as an isomorphism class of graphs, though we think of an unlabeled graph as simply a graph without vertex labels. A graph with no vertices is called \emph{empty}. The empty graph is not considered to be a connected graph. The \emph{empty} species, denoted by $0$, is defined by $0[U]=\varnothing$ for all $U$. The species of the empty graph is $1$. The species of the singleton graph is denoted by $X$. We denote by $\mc{E}$ the species of \emph{sets}. A fundamental property of $\mc{E}$ that we shall use several times is that for any species (or virtual species) $F$ and $G$, $\mc{E}(F+G) = \mc{E}(F)\mc{E}(G)$.

The cycle index of the species $\mc{G}$ of graphs was given in~\cite[p.~79]{species-book} and~\cite[p.~334, Theorem 2]{Robinson-nonseparable}:$$Z_{\mc{G}}=\sum_{n\ge 0} \biggl( \sum_{\lambda \vdash n} \fix\,\mc{G}[\lambda] \,\frac{p_{\lambda}}{ z_{\lambda}}\biggr),$$
where $$ \fix\,\mc{G}[\lambda]=2^{\frac{1}{ 2}\sum_{i,j \ge 1}\gcd(i,\,j)\,c_i(\lambda)c_j(\lambda) - \frac{1}{ 2}\sum_{k\ge1}(k \bmod 2)\,c_k(\lambda)}.$$

A \emph{virtual species} is a formal difference of species (see~\cite[p.~121]{species-book}). Since there is only one $(1+X)$-structure on the empty set, the species $1+X$ satisfies $(1+X)(0)=1$. Proposition $18$ of~\cite[p.~129]{species-book} asserts that there exists a unique virtual species which we denote by  $(1+X)^c$, the virtual species of  ``connected $(1+X)$-structures" with $1+X=\mc{E}\circ(1+X)^c$, or equivalently, $X=\mc{E}_+\circ(1+X)^c.$ Thus $(1+X)^c$ is referred to as the ``combinatorial logarithm of the species $1+X$'' (see~\cite[p.~131]{species-book}), or the compositional inverse of $\mc{E}_+$. 

\begin{lem}
The associated series of $(1+X)^c$ are 
\begin{align*}
(1+X)^c(x) &= \log (1+x), \\
\wt{(1+X)^c}(x)&= x-x^2\\
Z_{(1+X)^c}&= 
 \sum_{k\ge 1}\frac{\mu(k)}{k} \log (1+p_k),
\end{align*}
where $\mu$ denotes the \emph{M\"obius function}, defined by
\[\mu(k)=\left\{
\begin{array}{ll}
0, & \text{ if } n \text{ has one or more repeated prime factors},\\
1, & \text{ if } n=1,\\
(-1)^j, & \text{ if } n \text{ is a product of } j\ \text{distinct
primes.}
\end{array} \right.\]
\end{lem}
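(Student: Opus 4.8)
The plan is to take the defining relation $X=\mc{E}_+\circ(1+X)^c$ supplied by Proposition~18 and push it through the three composition formulas recorded above, solving for the series of $(1+X)^c$ in each case. For the exponential generating series, applying $(-)(x)$ and using $(\Phi\circ\Psi)(x)=\Phi(\Psi(x))$ together with $\mc{E}_+(x)=e^{x}-1$ gives $x=e^{(1+X)^c(x)}-1$, hence $(1+X)^c(x)=\log(1+x)$.

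For the cycle index, applying $Z_{(-)}$ and using $Z_{\Phi\circ\Psi}=Z_\Phi\circ Z_\Psi$ (plethysm) yields $Z_{\mc{E}_+}\circ Z_{(1+X)^c}=Z_X=p_1$, where $Z_{\mc{E}_+}=\exp\bigl(\sum_{k\ge1}p_k/k\bigr)-1$ has zero constant term and linear term $p_1$, hence a unique (two-sided) plethystic inverse, which by the displayed relation must be $Z_{(1+X)^c}$. So it suffices to check that the claimed $L:=\sum_{k\ge1}\tfrac{\mu(k)}{k}\log(1+p_k)$ satisfies $Z_{\mc{E}_+}\circ L=p_1$. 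Since plethystic substitution sends each $p_i$ occurring in $L$ to $p_{ik}$, we have $p_k[L]=\sum_{j\ge1}\tfrac{\mu(j)}{j}\log(1+p_{jk})$, and therefore
\[
\sum_{k\ge1}\frac{1}{k}\,p_k[L]=\sum_{m\ge1}\log(1+p_m)\sum_{jk=m}\frac{\mu(j)}{jk}=\sum_{m\ge1}\frac{\log(1+p_m)}{m}\sum_{j\mid m}\mu(j)=\log(1+p_1),
\]
using $\sum_{j\mid m}\mu(j)=[m=1]$. Exponentiating and subtracting $1$ gives $p_1$, so by uniqueness of the plethystic inverse $Z_{(1+X)^c}=L$.

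For the type generating series, applying $\wt{(-)}$ and using $\wt{\Phi\circ\Psi}(x)=Z_\Phi(\wt\Psi(x),\wt\Psi(x^2),\dots)$, and writing $h(x)=\wt{(1+X)^c}(x)$, we get $x=\exp\bigl(\sum_{k\ge1}h(x^k)/k\bigr)-1$, i.e.\ $\sum_{k\ge1}h(x^k)/k=\log(1+x)$. This equation determines the Taylor coefficients of $h$ recursively (the coefficient of $x^n$ isolates $[x^n]h$ in terms of lower-order coefficients), so it has a unique solution, and $h(x)=x-x^2$ works because $\sum_{k\ge1}(x^k-x^{2k})/k=-\log(1-x)+\log(1-x^2)=\log(1+x)$; hence $\wt{(1+X)^c}(x)=x-x^2$. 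As a consistency check, putting $p_1=x$ and $p_k=0$ for $k\ge2$ in $L$ returns $\log(1+x)$, agreeing with the exponential generating series.

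The one step needing care is the plethysm bookkeeping: reading $p_k[L]$ correctly as the substitution $p_i\mapsto p_{ik}$ and then reindexing the resulting double sum by $m=jk$ so that the inner sum collapses to the M\"obius sum $\sum_{j\mid m}\mu(j)$. One could instead verify the type generating series by substituting $p_k\mapsto x^k$ directly into $L$, but this requires evaluating $\sum_{k\mid m}\mu(k)(-1)^{m/k-1}$, which needs a short case analysis on the $2$-adic valuation of $m$; routing through the functional equation for $h$ avoids that detour.
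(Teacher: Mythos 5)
Your proof is correct, but it is organized differently from the paper's. The paper cites the general combinatorial-logarithm formulas of Bergeron--Labelle--Leroux for the exponential generating series and the cycle index, and spends its effort on the type generating series, which it obtains by writing $\wt{(1+X)^c}(x)=\sum_{k\ge1}\frac{\mu(k)}{k}\log(1+x^k)$, extracting the coefficient of $x^n$ as $\frac1n\sum_{d\mid n}\mu(d)(-1)^{n/d-1}$, and evaluating that sum by M\"obius inversion against $g(1)=1$, $g(2)=-2$, $g(n)=0$ for $n>2$. You instead derive everything directly from the defining relation $X=\mc{E}_+\circ(1+X)^c$: your plethystic verification that $L=\sum_{k\ge1}\frac{\mu(k)}{k}\log(1+p_k)$ satisfies $Z_{\mc{E}_+}\circ L=p_1$ (via the reindexing $m=jk$ and $\sum_{j\mid m}\mu(j)=[m=1]$) is a clean, self-contained substitute for the paper's citation, and your treatment of the type series -- setting up $\sum_{k\ge1}h(x^k)/k=\log(1+x)$, noting its solution is unique, and checking $h(x)=x-x^2$ via $-\log(1-x)+\log(1-x^2)=\log(1+x)$ -- is essentially the paper's M\"obius-inversion argument run in the opposite direction: you verify a guessed answer where the paper solves for the coefficients. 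The trade-off is that your route requires knowing the answer in advance and invoking uniqueness of the (plethystic or functional) inverse, whereas the paper's coefficient extraction derives $x-x^2$ without guessing; both are legitimate, and your version has the advantage of not relying on external formulas beyond the composition rules already stated in Section 1.
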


\begin{proof} The first and third formulas are special cases of general formulas for combinatorial logarithms given 
in \cite[p.~131]{species-book}. For the second formula, we have from \cite[p.~131, eq. (58)]{species-book},
\begin{equation}
\label{eqn-read}
\wt{(1+X)^c}(x)= \sum_{k\ge 1} \frac{\mu(k)}{k} \log
\wt{(1+X)}(x^k)= \sum_{k\ge 1} \frac{\mu(k)}{k} \log (1+x^k).
\end{equation}
The coefficient of $x^n$ in \eqref{eqn-read} is easily seen to be $\frac 1n\sum_{d|n}\mu(d) (-1)^{n/d -1}$.
Now define $g$ by $g(1)=1$, $g(2)=-2$, and $g(n) = 0$ for $n>2$. Then $\sum_{d|n}g(d) = (-1)^{d-1}$, so by M\"obius inversion, $\sum_{d|n}\mu(d) (-1)^{n/d -1} = g(n)$.

A different proof of the second formula was given by Read \cite{read1}.
\end{proof}

We denote by $\mc{K}$ the species of \emph{complete graphs}, which are graphs in which each pair of vertices are adjacent. The complement of a complete graph is called an \emph{edgeless graph}. The species of edgeless graphs, which are graphs with isolated vertices, may be identified with the species $\mc{E}$ of sets. We see that there is a natural transformation $\alpha$ that produces for every finite set $U$ a bijection between $\mc{E}[U]$ and $\mc{K}[U]$, namely, sending the edgeless graph on $U$ to the complete graph with vertex set $U$. Note that this bijection is carried through the complementation of graphs. The following diagram commutes for any finite sets $U$, $V$ and any bijection $\sigma: U \to V$:
\begin{center}
 $\begin{CD}
 \mc{E}[U] @>\mc{E}[\sigma]>> \mc{E}[V] \\
 @V{\alpha}VV @VV{\alpha}V \\
 \mc{K}[U] @>\mc{K}[\sigma]>> \mc{K}[V] \\
\end{CD}$
\end{center}
In this case we call these two species \emph{isomorphic} to each other (see~\cite[p.~21]{species-book} for the general definition of two species being isomorphic). Two isomorphic species essentially possess the ``same'' combinatorial properties. Thus we write $\Phi=\Psi$ to mean the species $\Phi$ is isomorphic to the species $\Psi$, and say there is a \emph{combinatorial equality} (see~\cite[p.~21]{species-book}) between them.

The theory of \emph{multisort species} (see~\cite[p.~100]{species-book}) is analogous to multivariate functions. A $2$-sort species $F(X,Y)$ generates for each finite two-set $U=(U_1, U_2)$ a finite set $F[U_1, U_2]$, where elements in  $F[U_1, U_2]$ are called $F$-structures on $U$. Furthermore, for any multibijection $$\sigma=(\tau_1, \tau_2): (U_1, U_2)\rightarrow(V_1, V_2),$$ where $\tau_1$ is a bijection from $U_1$ to $V_2$ and $\tau_2$ is a bijection from $U_2$ to $V_2$,  the transport of $F(X,Y)$-structures along $\sigma$ is  $$F[\sigma]=F[\tau_1,\tau_2]: F[U_1, U_2] \rightarrow F[V_1,V_2].$$ Moreover, the functions $F[\sigma]$ must satisfy the functoriality properties. In this paper, we consider the 2-sort species $\mc{H}(X,Y)$ of connected graphs in which vertices of degree one have sort $Y$ and all other vertices have sort $X$ (in section~\ref{sec-noend}), and $\mc{G}(X,Y)$ of bicolored graphs in which white vertices are of sort $X$ and black vertices are of sort $Y$ (in section~\ref{sec-2color}).

We introduce a kind of decomposition for graphs that will be helpful in counting point-determining and especially bi-point-determining graphs. 
\begin{dfn}
\label{dfn-superimposition}
Let $H_1,\dots, H_m$ be graphs with disjoint vertex sets, and let $G$ be a graph with vertex set $\{V(H_1), \dots, V(H_m)\}$.  We define the {\it superimposition} $G|_{H_1,\dots,H_m}$ of $G$ on $\{H_1, \dots, H_m\}$ to be the graph with vertex set $\bigcup_{i=1}^m V(H_i)$ in which $\edge uv$ is an edge if it is an edge of some $H_i$ or if $u\in V(H_i)$ and $v\in V(H_j)$ for some $i\ne j$, and $\edge{V(H_i)}{V(H_j)}\in E(G)$. 
\end{dfn}
Figure~\ref{f-superimp_exa} illustrates the superimposition of a graph $G$ on a set of graphs $\{H_1, H_2, H_3\}$.
\begin{figure}[ht]
\begin{center}
\includegraphics[width=12cm]{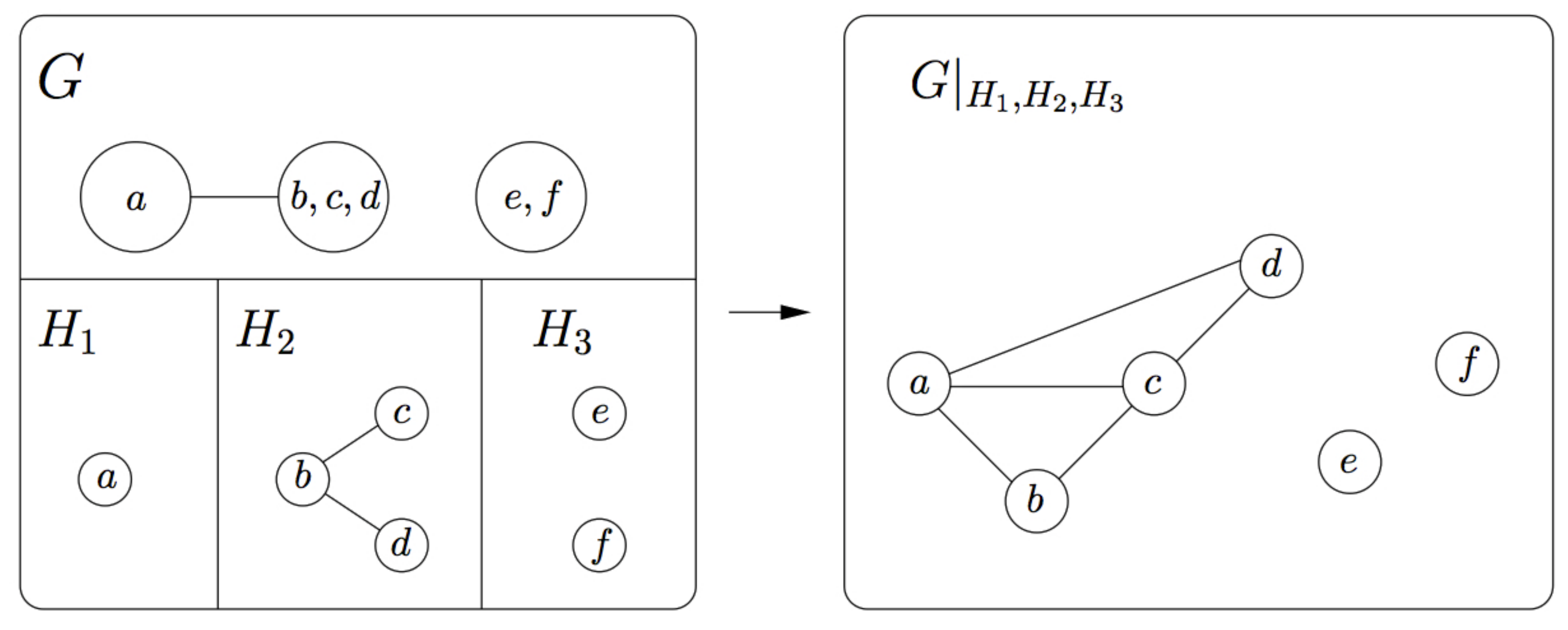}
\end{center}
\caption{\label{f-superimp_exa}
The superimposition $G|_{H_1, \,H_2,\, H_3}$.}
\end{figure}

We introduce two special cases of superimposition. Let $n$ be any positive integer. The {\it edgeless graph} of order $n$ is a graph with $n$ isolated vertices, denoted $E_n$. The {\it complete graph} of order $n$ is a graph with $n$ vertices each pair of which is adjacent to each other, denoted $K_n$. 

\begin{dfn}
\label{dfn-unionjoin}
Let $\{G_1, \dots, G_n\}$ be a set of nonempty pairwise disjoint graphs. 
We define the {\it union} of $\{G_1,\dots, G_n\}$ to be the superimposition $E_n|_{G_1,\dots,G_n}$, and the {\it join} of $\{G_1, \dots, G_n\}$ to be the superimposition $K_n|_{G_1,\dots,G_n}$, where  the vertex set of $E_n$ and $K_n$ is $\{V(G_1),\dots, V(G_n)\}$.
\end{dfn}

The operation of superimposition of species of graphs is closely related to composition of species. Let $\Phi$ and $\Psi$ be two species of graphs; i.e., for every finite set $U$, $\Phi[U]$ and $\Psi[U]$ are sets of graphs with vertex set $U$.
 We define a species $\Phi\diamond \Psi$ for which $(\Phi\diamond \Psi)[U]$ is the set of all superimpositions $G|_{H_1,\dots,H_m}$ in which $H_1,\dots, H_m$ are $\Psi$-graphs with $\bigcup_{i=1}^m V(H_i) = U$ and $G$ is a $\Phi$-graph with vertex set $\{V(H_1),\dots, V(H_n)\}$. 

It is clear from the definitions that there is a species map (see Definition 12, \cite[p.~21]{species-book}), from $\Phi\circ \Psi$ to $\Phi\diamond \Psi$. The following lemma, whose proof is straightforward, will be essential in our enumerative applications of superimposition.

\begin{lem} 
\label{lem-super}
Let $\Phi$ and $\Psi$ be species of graphs such that every $\Phi\diamond \Psi$-graph can be expressed uniquely as a superimposition of a $\Phi$-graph on a set of $\Psi$-graphs. Then $\Phi\circ \Psi$ is isomorphic to $\Phi\diamond\Psi$. \qed
\end{lem}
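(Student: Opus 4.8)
The plan is to make precise the ``species map from $\Phi\circ\Psi$ to $\Phi\diamond\Psi$'' mentioned just before the statement, and then to check that the unique-decomposability hypothesis upgrades it to an isomorphism. Concretely, I would define a morphism $\theta\colon \Phi\circ\Psi \to \Phi\diamond\Psi$ as follows. A $(\Phi\circ\Psi)$-structure on a finite set $U$ is a triple $(\pi, f, \gamma)$ with $\pi$ a partition of $U$, $f$ a $\Phi$-graph on the block set $\pi$, and $\gamma = (\gamma_B)_{B\in\pi}$ a family in which $\gamma_B$ is a $\Psi$-graph with $V(\gamma_B)=B$. Since the vertex set of $f$ is exactly $\{V(\gamma_B) : B\in\pi\}$, the superimposition $\theta_U(\pi,f,\gamma):= f|_{(\gamma_B)_{B\in\pi}}$ is defined, and it is a $(\Phi\diamond\Psi)$-graph on $U$ by the very definition of $\diamond$.

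Next I would verify naturality: for a bijection $\sigma\colon U\to V$, one checks directly from the definition of transport of structures for $\circ$ (which relabels the blocks of $\pi$, the vertices of $f$, and the vertices of each $\gamma_B$ via $\sigma$) and from the definition of transport for a species of graphs under $\diamond$ (which simply relabels vertices via $\sigma$) that $\theta_V\circ(\Phi\circ\Psi)[\sigma]=(\Phi\diamond\Psi)[\sigma]\circ\theta_U$. This is routine bookkeeping, amounting to the observation that forming a superimposition commutes with renaming vertices.

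Then comes bijectivity of each $\theta_U$. Surjectivity is immediate: by definition every $(\Phi\diamond\Psi)$-graph on $U$ has the form $G|_{H_1,\dots,H_m}$ with the $H_i$ being $\Psi$-graphs whose vertex sets partition $U$ and $G$ a $\Phi$-graph on $\{V(H_1),\dots,V(H_m)\}$, which is exactly $\theta_U(\pi,G,\gamma)$ for $\pi=\{V(H_1),\dots,V(H_m)\}$ and $\gamma_{V(H_i)}=H_i$. Injectivity is where the hypothesis is used: if $\theta_U(\pi,f,\gamma)=\theta_U(\pi',f',\gamma')$, then this common graph admits two expressions as a superimposition of a $\Phi$-graph on a set of $\Psi$-graphs, so by hypothesis the set $\{\gamma_B : B\in\pi\}$ of $\Psi$-graphs equals $\{\gamma'_{B'} : B'\in\pi'\}$ and $f=f'$; taking vertex sets recovers $\pi=\pi'$, and matching each block to the $\Psi$-graph it carries recovers $\gamma=\gamma'$. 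A natural transformation that is bijective on every finite set is an isomorphism of species in the sense recalled in the excerpt, so $\Phi\circ\Psi=\Phi\diamond\Psi$.

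The only real subtlety, and the one place the hypothesis is indispensable, is the injectivity step: without unique decomposability many composition structures can superimpose to the same graph (as already happens when $\Phi=\Psi=\mc{G}$), and $\theta$ is then merely a surjection. Everything else is a direct unwinding of the definitions of $\circ$, of $\diamond$, and of transport of structures, so I would keep the written-out proof short.
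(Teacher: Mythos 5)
Your argument is correct and is exactly the ``straightforward'' proof the paper omits: it makes explicit the species map from $\Phi\circ\Psi$ to $\Phi\diamond\Psi$ mentioned just before the lemma and verifies that the unique-decomposition hypothesis makes it a natural bijection. Nothing to add.
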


Note that the definition of superimposition of species of graphs is not a species operation in the sense that isomorphic species are not equivalent with respect to superimposition. For example, the species $\mc{E}_+$ of nonempty edgeless graphs is isomorphic to the species $\mc{K}_+$ of nonempty complete graphs, but $\mc{E}\diamond \mc{K}_+$ is the species of graphs all of whose connected components are complete and $\mc{E}\diamond \mc{E}_+$ is $\mc{E}$, the species of edgeless graphs. 

It is clear that superimposition is compatible with complementation of graphs, and that the following lemma holds.

\begin{lem}
\label{lem-supercomposition}
Suppose that the species of graphs $\Phi$ and $\Psi$ satisfy the hypotheses of Lemma \ref{lem-super}. Let $\bar \Phi$ be the species of complements of $\Phi$-graphs and let $\bar\Psi$ be the species of complements of $\Psi$-graphs. 
Then $\bar\Phi\circ \bar\Psi$ is isomorphic to $\bar\Phi\diamond\bar\Psi$, which is the species of complements of $\Phi\diamond\Phi$-graphs. \qed
\end{lem}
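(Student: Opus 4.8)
The plan is to make precise the remark that superimposition is compatible with complementation, and then to apply Lemma~\ref{lem-super} to the pair $\bar\Phi,\bar\Psi$. The crucial identity is that for pairwise disjoint graphs $H_1,\dots,H_m$ and a graph $G$ with vertex set $\{V(H_1),\dots,V(H_m)\}$,
\[
  \overline{\,G|_{H_1,\dots,H_m}\,}=\bar G\,|_{\,\bar H_1,\dots,\bar H_m\,}.
\]
This follows by inspecting Definition~\ref{dfn-superimposition} edge by edge: for $u\in V(H_i)$ and $v\in V(H_j)$, the pair $\edge uv$ is a non-edge of $G|_{H_1,\dots,H_m}$ exactly when either $i=j$ and $\edge uv\notin E(H_i)$, or $i\ne j$ and $\edge{V(H_i)}{V(H_j)}\notin E(G)$; that is precisely the condition for $\edge uv$ to be an edge of $\bar G|_{\bar H_1,\dots,\bar H_m}$.

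Next I would use this identity to identify $\bar\Phi\diamond\bar\Psi$ with the species of complements of $\Phi\diamond\Psi$-graphs. On a finite set $U$, a $(\bar\Phi\diamond\bar\Psi)$-graph has the form $G'|_{H'_1,\dots,H'_m}$ with $G'$ a $\bar\Phi$-graph and each $H'_k$ a $\bar\Psi$-graph; writing $G'=\bar G$ and $H'_k=\bar H_k$ with $G$ a $\Phi$-graph and each $H_k$ a $\Psi$-graph, the displayed identity exhibits $G'|_{H'_1,\dots,H'_m}=\overline{\,G|_{H_1,\dots,H_m}\,}$ as the complement of a $(\Phi\diamond\Psi)$-graph, and conversely. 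Since complementation is an involution on the graphs with vertex set $U$ and commutes with transport along any bijection $\sigma\colon U\to V$, this correspondence is a natural isomorphism, so $\bar\Phi\diamond\bar\Psi$ is indeed the species of complements of $\Phi\diamond\Psi$-graphs.

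Finally I would check that $\bar\Phi$ and $\bar\Psi$ satisfy the hypotheses of Lemma~\ref{lem-super} and then invoke it. If a $(\bar\Phi\diamond\bar\Psi)$-graph $K$ admitted two expressions as a superimposition of a $\bar\Phi$-graph on a set of $\bar\Psi$-graphs, complementing and applying the displayed identity would produce two expressions of the $(\Phi\diamond\Psi)$-graph $\bar K$ as a superimposition of a $\Phi$-graph on a set of $\Psi$-graphs; by the hypothesis on $\Phi,\Psi$ these coincide, and complementing once more (an injective operation, acting separately on the underlying partition, on the $\Phi$-graph indexed by the blocks, and on each block's $\Psi$-graph) shows the original two expressions coincide as well. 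Hence Lemma~\ref{lem-super} applies to $\bar\Phi,\bar\Psi$ and yields that $\bar\Phi\circ\bar\Psi$ is isomorphic to $\bar\Phi\diamond\bar\Psi$. The only mildly delicate point is this last bookkeeping step, verifying that ``the two superimposition expressions agree'' transfers across complementation at the level of each constituent piece; but since complementation is an involution on each piece, no genuine obstacle appears.
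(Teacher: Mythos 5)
Your proof is correct and follows exactly the route the paper intends: the paper states the lemma without proof, remarking only that ``superimposition is compatible with complementation,'' and your argument — the edge-by-edge identity $\overline{G|_{H_1,\dots,H_m}}=\bar G|_{\bar H_1,\dots,\bar H_m}$, the resulting identification of $\bar\Phi\diamond\bar\Psi$ with complements of $\Phi\diamond\Psi$-graphs, and the transfer of the uniqueness hypothesis so that Lemma~\ref{lem-super} applies to $\bar\Phi,\bar\Psi$ — is precisely the omitted verification. (You also correctly read ``$\Phi\diamond\Phi$'' in the statement as the typo for ``$\Phi\diamond\Psi$.'')
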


\section{Point-Determining Graphs}
\label{sec-pd}

Let $v$ be a vertex of a graph $G$. The \emph{neighborhood} $N(v)$ of $v$ in $G$ is the set of vertices adjacent to $v$. That is, $N(v)=\{ w\in V(G): \edge{v}{w}\in E(G)\}$. The \emph{closed neighborhood} $\bar{N}(v)$ of $v$ is $\bar{N}(v)=N(v)\cup \{v\}$. Note that if $N(v)=N(w)$, then $v$ is not adjacent to $w$, and if $\bar{N}(v)=\bar{N}(w)$, then $v$ is adjacent to $w$. In~\cite{corneil}, vertices with the same neighborhoods are called \emph{weak siblings}, and vertices with the same closed neighborhoods are called \emph{strong siblings}.

\begin{dfn}\label{dfn-pdcpd}
A \emph{point-determining graph} is a graph $G$ in which distinct vertices have distinct neighborhoods. A graph is called \emph{co-point-determining} if its complement is a point-determining graph. 
\end{dfn}

Note that a graph is co-point-determining if and only if distinct vertices  have distinct closed neighborhoods. Since the neighborhood of an isolated vertex (a vertex of degree $0$) is the empty set,  a point-determining graph has at most one isolated vertex. We regard the empty graph as both point-determining and co-point-determining. 

Since the complement of a point-determining graph is co-point-determining, the species $\mc{P}$ of point-determining graphs and the species $\mc{Q}$ of co-point-determining graphs are isomorphic, written as $\mc{P}=\mc{Q}$. Note that, as in this case, two species of graphs can be isomorphic without the corresponding graphs being isomorphic.

\begin{thm}
\label{thm-pdcpd}
For the species $\mc{G}$ of graphs, the species $\mc{P}$ of point-determining graphs, the species $\mc{E}_+$ of nonempty edgeless graphs, the species $\mc{Q}$ of co-point-determining graphs, and the species $\mc{K}_+$ of nonempty complete graphs, we have
\begin{equation}\label{eqn-pdcpd}
\mc{G}=\mc{P}\circ\mc{E}_+=\mc{Q}\circ\mc{K}_+.
\end{equation}
\end{thm}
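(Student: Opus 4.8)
The plan is to exhibit $\mc{G}$ as a superimposition in the sense of Lemma~\ref{lem-super}, taking $\Phi=\mc{P}$ and $\Psi=\mc{E}_+$: I will show that $\mc{P}\diamond\mc{E}_+=\mc{G}$ and that the uniqueness hypothesis of Lemma~\ref{lem-super} holds, so that $\mc{G}=\mc{P}\circ\mc{E}_+$. The relevant decomposition is the classical reduction of an arbitrary graph to a point-determining one. Given a graph $G$ on $U$, define an equivalence relation on $U$ by $u\sim v$ iff $N(u)=N(v)$, and let $\pi=\{B_1,\dots,B_m\}$ be the induced partition into classes. Two facts follow immediately from the remarks preceding Definition~\ref{dfn-pdcpd}: first, $N(u)=N(v)$ forces $u$ and $v$ to be non-adjacent, so each induced subgraph $G[B_i]$ is an edgeless graph $E_{n_i}$ with $n_i=|B_i|\ge 1$; second, for $i\ne j$ whether a vertex of $B_i$ is adjacent to a vertex of $B_j$ depends only on $i$ and $j$, since all vertices of $B_i$ have the same neighborhood. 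Hence there is a well-defined graph $P$ with vertex set $\{B_1,\dots,B_m\}$, in which $\edge{B_i}{B_j}\in E(P)$ iff some vertex of $B_i$ is adjacent to some vertex of $B_j$, and by construction $G=P|_{G[B_1],\dots,G[B_m]}$. This gives $\mc{G}\subseteq\mc{P}\diamond\mc{E}_+$; the reverse inclusion is automatic since a superimposition of graphs is a graph.

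The next step is to verify that $P$ is point-determining and that this is the \emph{only} way to express $G$ as a superimposition of a point-determining graph on nonempty edgeless graphs. For the first point, if $B_i$ and $B_j$ had the same neighborhood in $P$, then for $u\in B_i$ and $v\in B_j$ one computes $N(u)=\bigcup\{B_k:\edge{B_i}{B_k}\in E(P)\}=\bigcup\{B_k:\edge{B_j}{B_k}\in E(P)\}=N(v)$, using that $P$ has no loops, and hence $B_i=B_j$. For uniqueness, suppose $G=P'|_{H_1,\dots,H_{m'}}$ with $P'$ point-determining and each $H_k$ a nonempty edgeless graph. Vertices lying in a common block $V(H_k)$ are non-adjacent and share a $G$-neighborhood, whereas vertices in distinct blocks $V(H_k)$, $V(H_l)$ have distinct $G$-neighborhoods: if $\edge{V(H_k)}{V(H_l)}\in E(P')$, a vertex of $V(H_l)$ lies in the neighborhood of a vertex $u\in V(H_k)$ but not of $u$ itself; otherwise $N_{P'}(V(H_k))\ne N_{P'}(V(H_l))$ supplies a block in the symmetric difference, necessarily distinct from $V(H_k)$ and $V(H_l)$, and this nonempty block lies in exactly one of the two neighborhoods. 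Therefore the blocks of $P'$ are exactly the $\sim$-classes, and then $P'=P$. So each $\mc{P}\diamond\mc{E}_+$-graph has a unique such expression, and Lemma~\ref{lem-super} yields $\mc{G}=\mc{P}\circ\mc{E}_+$.

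For the second equality I would pass to complements. Since $\mc{P}$ and $\mc{E}_+$ satisfy the hypotheses of Lemma~\ref{lem-super}, Lemma~\ref{lem-supercomposition} gives $\bar{\mc{P}}\circ\bar{\mc{E}_+}=\bar{\mc{P}}\diamond\bar{\mc{E}_+}$, the species of complements of $\mc{P}\diamond\mc{E}_+$-graphs. Now $\bar{\mc{P}}=\mc{Q}$ by Definition~\ref{dfn-pdcpd}, $\bar{\mc{E}_+}=\mc{K}_+$ since the complement of a nonempty edgeless graph is a nonempty complete graph, and the complement of an arbitrary graph is again an arbitrary graph, so the species of complements of all graphs is $\mc{G}$ itself. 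Combining, $\mc{Q}\circ\mc{K}_+=\mc{G}$.

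The step I expect to be the main obstacle is the uniqueness assertion of the second paragraph — namely that in \emph{any} superimposition $P'|_{H_1,\dots,H_{m'}}$ of a point-determining graph on nonempty edgeless graphs the underlying partition of $V(G)$ is forced to be the same-neighborhood partition. Everything else is routine: the existence of the decomposition is the standard compression, and the passage to the generating-series identities (exponential, type, and cycle index) is automatic from the composition formulas recorded in Section~\ref{sec-species}, since \eqref{eqn-pdcpd} is a combinatorial equality.
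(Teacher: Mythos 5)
Your proposal is correct and follows essentially the same route as the paper: the same-neighborhood equivalence relation yields the decomposition of an arbitrary graph as a superimposition of a point-determining graph on nonempty edgeless blocks, uniqueness is checked so that Lemma~\ref{lem-super} applies, and the second equality is obtained from Lemma~\ref{lem-supercomposition} by complementation. Your uniqueness argument (same block $\Leftrightarrow$ same neighborhood, split into the adjacent and non-adjacent cases) is just a more explicit version of the paper's observation that the blocks must coincide with the equivalence classes.
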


\begin{proof}
We show that every graph can be expressed uniquely as a superimposition of a point-determining graph on a set of edgeless graphs. We first prove uniqueness. Suppose that $G$ is a superimposition $P|_{H_1,\dots, H_m}$ where $P$ is point-de\-ter\-mining and each $H_i$ is edgeless. (See Figure~\ref{f-graph_pd} for an example of this construction.) We define an equivalence relation on $V(G)$ in which vertices $u$ and $v$ are equivalent if they have the same neighborhood. Then since $H_i$ is edgeless, any two vertices of $V(H_i)$ must have the same neighborhood in $G$, so $V(H_i)$ is contained in an equivalence class. If $V(H_i)$ and $V(H_j)$ were contained in the same equivalence class, where $i\ne j$, then $V(H_i)$ and $V(H_j)$ would have the same neighborhood in $P$, so $P$ would not be point-determining. Therefore the vertex sets $V(H_i)$ must be the equivalence classes. It is easily seen that $P$ must be the graph on the equivalence classes $V(H_1)$, \dots, $V(H_m)$ in which there is an edge from $V(H_i)$ to $V(H_j)$ if and only if there is an edge of $G$ from each element of $V(H_i)$ to each element of $V(H_j)$. 
Conversely, it is easily seen that this construction does indeed express $G$ as a superimposition of a point-determining graph on a set of edgeless graphs.

The second equality follows from Lemma~\ref{lem-supercomposition}.
\begin{figure}[htb]
\begin{center}
\includegraphics[width=12cm]{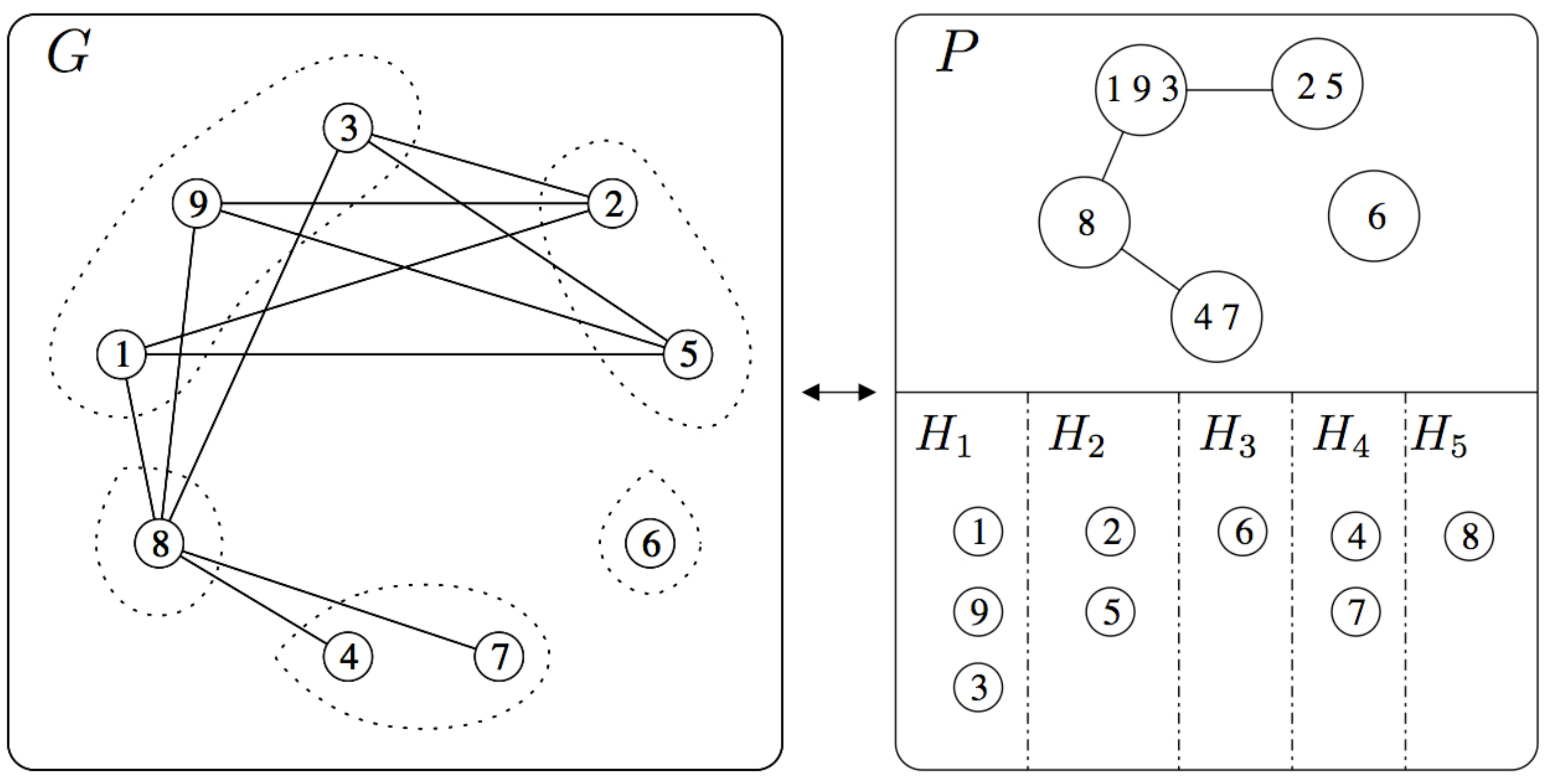}
\end{center}
\caption
{\label{f-graph_pd} $G = P|_{H_1,\dots, H_m}$}
\end{figure}
\end{proof}

Recall that $(1+X)^c$  is the compositional inverse of $\mc{E}_+$.  It follows from~\eqref{eqn-pdcpd} that 
\begin{equation}\label{eqn-qgamma}
\mc{P}=\mc{Q}=\mc{G}\circ(1+X)^c,
\end{equation}
which gives rise to several identities that can be used to compute the associated series of $\mc{P}$:
\begin{align}
\mc{P}(x)&=\mc{Q}(x)=\mc{G}(\log(1+x)), \label{eqn-pdcpd-label}\\
\wt{\mc{P}}(x)&=\wt{\mc{Q}}(x)=Z_{\mc{G}}( x-x^2, x^2-x^4, \dots ), \label{eqn-pdcpd-unlabel}\\
Z_{\mc{P}}&=Z_{\mc{Q}}=Z_{\mc{G}}\biggl( \sum_{k\ge
1}\frac{\mu(k)}{k} \log(1+p_k), \sum_{k\ge 1}\frac{\mu(k)}{k} \log
(1+p_{2k}), \ \dots \biggr).\notag
\end{align}

Read derived formulas~\eqref{eqn-pdcpd-label} and~\eqref{eqn-pdcpd-unlabel} in~\cite{read1}. Figure~\ref{f-pd_molecular} shows the unlabeled nonempty point-determining graphs on $n\le 5$ vertices.
\begin{figure}[ht]
  \begin{center}
  \includegraphics[width=15cm]{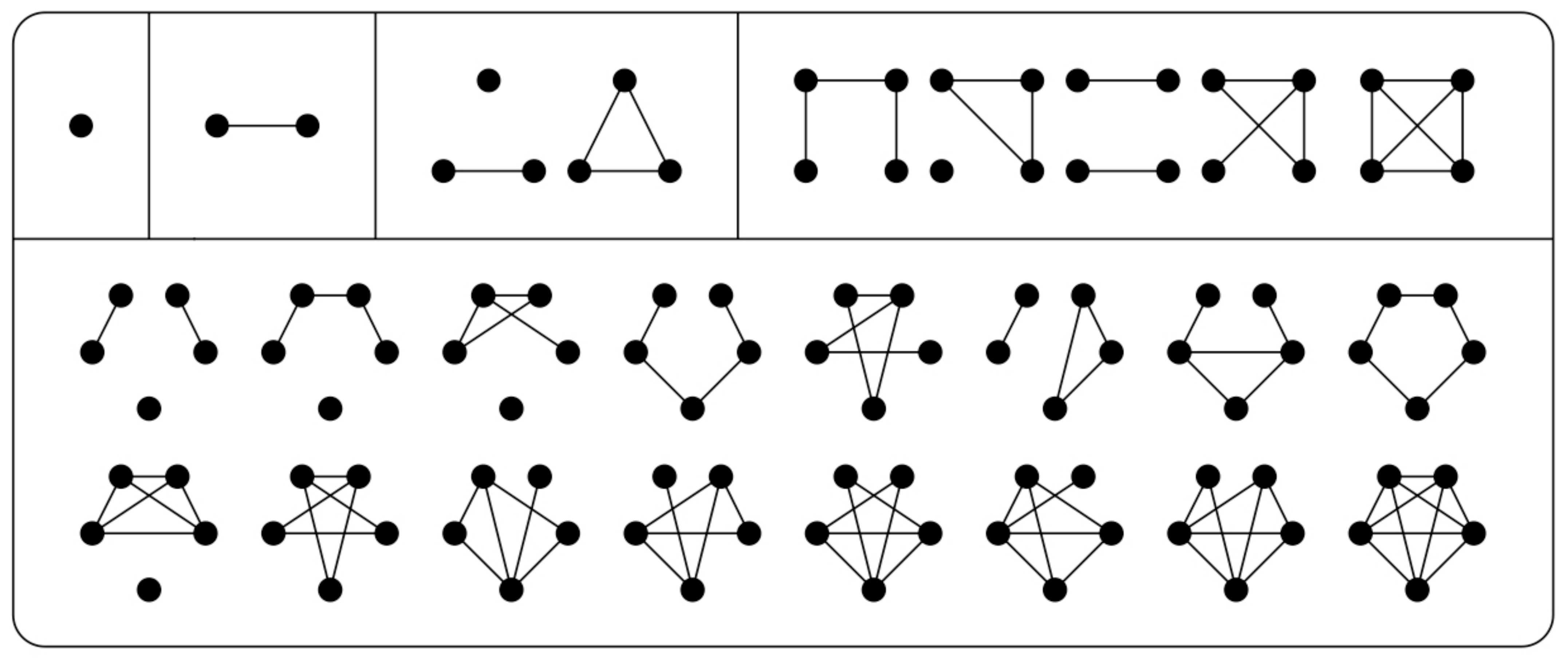}
  \end{center}
  \caption{\label{f-pd_molecular} Unlabeled nonempty point-determining graphs on at most 5 vertices.}
\end{figure}

Let $\mc{G}^c$ be the species of connected graphs. The observation that every graph is a set of connected graphs gives rise to the species identity
\begin{equation}\label{eqn-conngraph}
\mc{G}=\mc{E}\circ{\mc{G}^c},
\end{equation}
which can be written as $\mc{G}^c=(1+X)^c\circ\mc{G}_+.$

Connected point-determining graphs and connected co-point-determining graphs may be enumerated by looking at the connected components of point-determining graphs and co-point-determining graphs. In contrast to point-determining graphs and co-point-determining graphs, the species of connected point-determining graphs and connected co-point-determining graphs are not isomorphic.

\begin{thm}
\label{thm-connpdcpd}
For the species $\mc{P}$ of point-determining graphs, $\mc{Q}$ of co-point-determining graphs, $\mc{P}^c$ of connected point-determining graphs, and $\mc{Q}^c$ of connected co-point-determining graphs, we have
\begin{equation}\label{eqn-connpdcpd}
\mc{P}=\mc{Q}=(1+X)\cdot (\mc{E}\circ\mc{P}^c_{\ge 2})=\mc{E}\circ\mc{Q}^c.
\end{equation}
\end{thm}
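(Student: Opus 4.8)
The plan is to prove the two substantive equalities $\mc{P}=(1+X)\cdot(\mc{E}\circ\mc{P}^c_{\ge 2})$ and $\mc{Q}=\mc{E}\circ\mc{Q}^c$ separately, and then to chain them through the combinatorial equality $\mc{P}=\mc{Q}$ already established before the statement. Both rest on understanding how the point-determining and co-point-determining properties interact with the decomposition of a graph into connected components, so I would first isolate two facts. First, each connected component of a point-determining (resp.\ co-point-determining) graph is again point-determining (resp.\ co-point-determining): for a vertex $v$ lying in a component $C$ of $G$ one has $N_G(v)=N_C(v)$ and $\bar N_G(v)=\bar N_C(v)$, so equal (closed) neighborhoods inside $C$ would already violate the hypothesis on $G$. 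Second, a disjoint union of co-point-determining graphs is always co-point-determining, whereas a disjoint union of point-determining graphs is point-determining if and only if at most one summand is a single vertex: if $u$ and $v$ lie in different components then $N_G(u),N_G(v)$ (resp.\ $\bar N_G(u),\bar N_G(v)$) sit in disjoint vertex sets, which in the closed case forces $u=v$, a contradiction, and in the open case forces $N_G(u)=N_G(v)=\varnothing$, i.e.\ $u$ and $v$ are both isolated vertices.

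Granting these two facts, $\mc{Q}=\mc{E}\circ\mc{Q}^c$ follows exactly as $\mc{G}=\mc{E}\circ\mc{G}^c$ does: sending a co-point-determining graph to the set of its connected components, each carrying its induced structure, is a bijection---natural in the underlying vertex set---onto the set of $\mc{E}\circ\mc{Q}^c$-structures, with inverse given by disjoint union. Alternatively one may apply Lemma~\ref{lem-super} with $\Phi=\mc{E}$ and $\Psi=\mc{Q}^c$: superimposing an edgeless graph is forming a disjoint union, the decomposition into components is unique, and by the two facts above the species $\mc{E}\diamond\mc{Q}^c$ of graphs all of whose components are co-point-determining is exactly $\mc{Q}$.

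For the point-determining side I would invoke the observation recorded after Definition~\ref{dfn-pdcpd} that a point-determining graph has at most one isolated vertex; equivalently, since $\mc{P}^c=X+\mc{P}^c_{\ge 2}$, at most one connected component is a single vertex. Given a point-determining graph, one separates off its (at most one) isolated vertex and views the remaining components as a set of connected point-determining graphs on at least two vertices; by the two facts above this assignment is a bijection, natural in the vertex set, onto the set of $(1+X)\cdot(\mc{E}\circ\mc{P}^c_{\ge 2})$-structures, where the factor $1+X$ records whether the lone isolated vertex is present and the product is disjoint union. This yields $\mc{P}=(1+X)\cdot(\mc{E}\circ\mc{P}^c_{\ge 2})$, and combining with $\mc{P}=\mc{Q}$ finishes the proof.

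The one genuinely delicate point---and the place where I expect the only real care is needed---is the asymmetry between the two halves: connected co-point-determining pieces may be reassembled by disjoint union completely freely, but connected point-determining pieces may not, because two isolated vertices share the empty neighborhood. This is precisely why the full set species $\mc{E}$ must be replaced by $1+X$ in the point-determining formula, and correctly bookkeeping that single exceptional vertex is the whole content of the argument; the rest is a routine unwinding of the definitions of connected component and of species composition and product.
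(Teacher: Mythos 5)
Your proof is correct and takes essentially the same approach as the paper's: decompose into connected components, observe that the co-point-determining property passes freely between a graph and its components while the point-determining property additionally forbids two isolated vertices, whence the factor $1+X$ replacing a full $\mc{E}$ on that side. The paper compresses this into two sentences; you supply the component-wise verifications it leaves implicit.
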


\begin{proof}
A point-determining graph can have at most one isolated vertex, and its other connected components are connected point-determining graphs with at least two vertices. Therefore, $\mc{P}=(1+X)\cdot (\mc{E}\circ\mc{P}^c_{\ge 2})$. On the other hand,  a graph is co-point-determining if and only if all its connected components are. Therefore, $\mc{Q}=\mc{E}\circ\mc{Q}^c.$
\end{proof}

\begin{lem}
\label{lem-especies}
Let species $\Phi$ and $\Psi$ satisfy $\mc{E}(\Phi)=\mc{E}(\Psi)$.
Then $\Phi=\Psi$.
\end{lem}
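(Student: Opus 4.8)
The plan is to use the fact recalled above that $(1+X)^c$ is the compositional inverse of $\mc{E}_+$, after first converting the hypothesis about $\mc{E}$ into one about $\mc{E}_+$.

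First I would split off the empty structure. Since $\mc{E}=1+\mc{E}_+$ and $1\circ\Phi=1$ (composing with $1$ forces the set of blocks to be empty), we have $\mc{E}(\Phi)=1+\mc{E}_+(\Phi)$ and likewise $\mc{E}(\Psi)=1+\mc{E}_+(\Psi)$. Comparing the two sides on nonempty sets, where the summand $1$ contributes nothing (and on the empty set both $\mc{E}_+(\Phi)$ and $\mc{E}_+(\Psi)$ are empty), the hypothesis $\mc{E}(\Phi)=\mc{E}(\Psi)$ forces $\mc{E}_+(\Phi)=\mc{E}_+(\Psi)$; equivalently, subtract $1$ in the ring of virtual species.

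Next I would compose on the left with $(1+X)^c$. The virtual species $\mc{E}_+\circ\Phi$ has no structure on the empty set, so $(1+X)^c\circ(\mc{E}_+\circ\Phi)$ is well defined, and by associativity of composition together with $(1+X)^c\circ\mc{E}_+=X$ (the compositional inverse being two-sided) this equals $X\circ\Phi$. The same computation applies to $\Psi$, so $X\circ\Phi=X\circ\Psi$, whence $\Phi=\Psi$.

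The delicate point, and the only real obstacle, is the behavior on the empty set: a composition $\mc{E}\circ\Phi$ never sees $\Phi[\varnothing]$, because the blocks of a partition are nonempty, and $X\circ\Phi$ discards it as well, so what the argument above literally delivers is $\Phi_+=\Psi_+$. To conclude $\Phi=\Psi$ one needs $\Phi(0)=\Psi(0)$, which holds in every application of this lemma in the paper (the species involved, such as connected point-determining graphs, have at least one vertex); I would therefore either add this as a hypothesis or note that it holds in context. A secondary point to verify is that composing with the virtual species $(1+X)^c$ and the associativity rule $(A\circ B)\circ C=A\circ(B\circ C)$ are legitimate here, which is safe because $\mc{E}_+$ and $\mc{E}_+\circ\Phi$ both vanish on the empty set, so everything stays inside the combinatorial-logarithm formalism of \cite{species-book}. (Alternatively, one can run the argument on cycle indices: $Z_{\mc{E}}$ is invertible for plethysm, so $Z_{\mc{E}}\circ Z_\Phi=Z_{\mc{E}}\circ Z_\Psi$ gives $Z_\Phi=Z_\Psi$, and a species is determined up to isomorphism by its cycle index; but the compositional argument is more direct.)
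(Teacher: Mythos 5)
Your proof is correct and is essentially the paper's own argument: reduce the hypothesis to $\mc{E}_+(\Phi)=\mc{E}_+(\Psi)$ and compose on the left with $(1+X)^c$, using associativity and $(1+X)^c\circ\mc{E}_+=X$. The empty-set caveat you raise is not a real gap, since the composition $\mc{E}(\Phi)$ is only defined (even for virtual species) when $\Phi(0)=0$, so $\Phi=\Phi_+$ and $\Psi=\Psi_+$ automatically in any legitimate instance of the hypothesis.
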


\begin{proof}
It follows from $\mc{E}_+(\Phi)=\mc{E}_+(\Psi)$ that $(1+X)^c\circ \mc{E}_+(\Phi)=(1+X)^c\circ
\mc{E}_+(\Psi)$. Since $(1+X)^c \circ \mc{E}_+ = X,$ we have $\Phi=\Psi$.
\end{proof}

\begin{cor}
\label{cor-conncpd}
The species $\mc{Q}^c$ of connected  co-point-determining graphs and $\mc{G}^c$ of connected graphs satisfy
\begin{equation}\label{eqn-conncpdconngamma}
\mc{Q}^c=\mc{G}^c\circ(1+X)^c.
\end{equation}
\end{cor}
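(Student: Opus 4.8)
The plan is to obtain~\eqref{eqn-conncpdconngamma} by writing $\mc{Q}$ in two different ways as a ``set of connected structures'' and then cancelling the outer $\mc{E}$ by means of Lemma~\ref{lem-especies}. First I would recall from~\eqref{eqn-qgamma} that $\mc{Q}=\mc{G}\circ(1+X)^c$ and from~\eqref{eqn-conngraph} that $\mc{G}=\mc{E}\circ\mc{G}^c$. Since composition of (virtual) species is associative, substituting the second identity into the first gives
\[
\mc{Q}=(\mc{E}\circ\mc{G}^c)\circ(1+X)^c=\mc{E}\circ\bigl(\mc{G}^c\circ(1+X)^c\bigr).
\]
On the other hand, Theorem~\ref{thm-connpdcpd} furnishes $\mc{Q}=\mc{E}\circ\mc{Q}^c$. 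Comparing the two expressions for $\mc{Q}$ yields $\mc{E}(\mc{Q}^c)=\mc{E}\bigl(\mc{G}^c\circ(1+X)^c\bigr)$, and then Lemma~\ref{lem-especies}, applied with $\Phi=\mc{Q}^c$ and $\Psi=\mc{G}^c\circ(1+X)^c$, gives $\mc{Q}^c=\mc{G}^c\circ(1+X)^c$ at once. This is the ``connected refinement'' of~\eqref{eqn-qgamma}, exactly parallel to the way Theorem~\ref{thm-connpdcpd} refines Theorem~\ref{thm-pdcpd}.

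The only point I would treat with a little care is that $(1+X)^c$ is a virtual species, so $\Psi=\mc{G}^c\circ(1+X)^c$ is a priori only a virtual species, whereas Lemma~\ref{lem-especies} is stated for species. This is not a genuine obstacle: the composition $\mc{G}^c\circ(1+X)^c$ is legitimate because $(1+X)^c(0)=0$, and the proof of Lemma~\ref{lem-especies} uses nothing beyond the identity $(1+X)^c\circ\mc{E}_+=X$ together with stability of the operations involved, all of which hold in the ring of virtual species; hence the cancellation argument goes through verbatim. (It follows a posteriori that $\mc{G}^c\circ(1+X)^c$ is in fact an ordinary species, namely $\mc{Q}^c$.) So overall I expect this to be a short bookkeeping consequence of the two main theorems already established, with no substantive difficulty.
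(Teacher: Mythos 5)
Your proof is correct and follows exactly the paper's own argument: combine $\mc{Q}=\mc{G}\circ(1+X)^c$ with $\mc{G}=\mc{E}\circ\mc{G}^c$ via associativity, compare with $\mc{Q}=\mc{E}\circ\mc{Q}^c$ from Theorem~\ref{thm-connpdcpd}, and cancel $\mc{E}$ using Lemma~\ref{lem-especies}. Your extra remark about the lemma applying to virtual species is a reasonable point of care but does not change the argument.
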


\begin{proof}
Since the composition of species is associative~\cite[p.~53, Exercise 1]{species-book}, we deduce from~\eqref{eqn-conngraph} and~\eqref{eqn-qgamma} that $$\mc{Q}=\mc{G}\circ(1+X)^c=\mc{E}\circ\mc{G}^c\circ(1+X)^c.$$
The result follows immediately from Theorem~\ref{thm-connpdcpd} and Lemma~\ref{lem-especies}.
\end{proof}

Theorem~\ref{thm-connpdcpd} gives $\mc{Q}^c=(1+X)^c\circ\mc{P}_+.$ We have the following formulas for $\mc{Q}^c$:
\begin{align}
\mc{Q}^c(x)&=\log(\mc{P}(x)), \notag\\
\wt{\mc{Q}^c}(x)&=\sum_{k\ge 1} \frac{\mu(k)}{k}\,\log(\wt{\mc{P}}(x^k)), \notag\\
 Z_{\mc{Q}^c}&=\sum_{k \ge 1}\frac{\mu(k)}{k}\,\log(Z_{\mc{P}}\circ p_k). \notag
\end{align}

A consequence of Theorem~\ref{thm-connpdcpd} is $$(1+X)\cdot(\mc{E}\circ\mc{P}^c_{\ge 2})=\mc{E}\circ((1+X)^c+\mc{P}^c_{\ge 2})=\mc{E}\circ\mc{Q}^c.$$ Therefore, Lemma~\ref{lem-especies} gives
\begin{equation}\label{eqn-gammadifference}
(1+X)^c=\mc{Q}^c-\mc{P}^c_{\ge 2}.
\end{equation}
We have the following functional
equations relating the associated series of $\mc{P}^c$ and those
of $\mc{Q}^c$:
\begin{align}
\mc{Q}^c(x)  -\mc{P}^c(x) &=  \log(1+x) -x, \notag\\
\wt{\mc{Q}^c}(x)-\wt{\mc{P}^c}(x)&=  -x^2, \label{eqn-connpdcpd-ordinary}\\
Z_{\mc{Q}^c}- Z_{\mc{P}^c} &=    \sum_{k\ge 1}\frac{\mu(k)}{k} \log (1+p_k) -p_1. \notag
\end{align}

Note that the only unlabeled connected graph on two vertices is point-determining, and this accounts for the right-hand side of~\eqref{eqn-connpdcpd-ordinary}. Thus~\eqref{eqn-connpdcpd-ordinary} says that for $n>2$ there are as many   unlabeled connected point-determining as unlabeled connected co-point-determining graphs. A combinatorial bijection between these two sets might be interesting.

Proposition 7 of~\cite[p.~122]{species-book} states that every virtual species $\Phi$ can be written uniquely in its \emph{reduced form} \[\Phi=\Phi^{+}-\Phi^{-},\] where $\Phi^{+}$ and $\Phi^{-}$ are species with no molecular components in common. Now~\eqref{eqn-gammadifference} gives a way to write the virtual species $(1+X)^c$ as the difference of two species. However $\mc{Q}^c-\mc{P}^c_{\ge 2}$ is not the reduced form of $(1+X)^c$, since $\mc{Q}^c$ share the same molecular components as $\mc{P}^c_{\ge 2}$. For example, we can write the first few terms of the molecular decompositions of $\mc{P}^c$ and  $\mc{Q}^c$ as follows:
\begin{align*}
\mc{P}^c &=X+\mc{E}_2+\mc{E}_3+\big(\mc{E}_2\circ X^2+X^2\mc{E}_2+\mc{E}_4\big)+\cdots\\
\mc{Q}^c &=X+X\mc{E}_2+\big(\mc{E}_2\circ X^2+X\mc{E}_3+\mc{E}_2\circ\mc{E}_2\big)+\cdots
\end{align*}

For any finite set $U$, the intersection $\mc{P}^c[U]\cap\mc{Q}^c[U]$ is the set of connected \emph{bi-point-determining graphs} on $U$, denoted $\mc{B}^c[U]$ (enumeration of bi-point-determining graphs is carried out in section~\ref{sec-bipd}). The species $\mc{B}^c$ is a \emph{subspecies} (see~\cite[p.~120]{species-book}) of both $\mc{P}^c$ and $\mc{Q}^c$, and $$(1+X)^c= (\mc{Q}^c-\mc{B}^c)-(\mc{P}^c-\mc{B}^c).$$ However, further examination shows that this is still not a reduced form of $(1+X)^c$. 

\section{Graphs Without Endpoints}
\label{sec-noend}
Let $\mc{H}(X,Y)$ be the $2$-sort species of connected graphs in which every vertex of degree one has sort $Y$ and every other vertex has sort $X$. 

\begin{thm}
\label{thm-2sortnoend}
The $2$-sort species $\mc{H}(X,Y)$ satisfies
\begin{equation}\label{eqn-ira2sort}
\mc{H} (X,X+Y)=\mc{G}^c\circ(X\mc{E}(Y)) + \mc{E}_2(Y),
\end{equation}
where $\mc{G}^c$ is the species of connected graphs, and $\mc{E}_2$ is the species of $2$-element sets.
\end{thm}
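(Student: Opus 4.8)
The plan is to prove \eqref{eqn-ira2sort} by a direct bijection rather than by invoking Lemma~\ref{lem-super}; note that the way the pieces are glued together here is \emph{not} a superimposition of graphs, so the machinery of Section~\ref{sec-species} does not apply. For each pair of finite sets $(V_1,V_2)$ I would exhibit a bijection between $\bigl(\mc{G}^c\circ(X\mc{E}(Y))\bigr)[V_1,V_2]$ and the set of connected graphs $H$ on $V_1\cup V_2$ in which every vertex of $V_2$ has degree $1$ and no edge joins two vertices of $V_2$, and then observe that these graphs, together with the remaining structures, which form $\mc{E}_2(Y)$, exhaust $\mc{H}(X,X+Y)[V_1,V_2]$.

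First I would unwind both sides. Substituting $X+Y$ into the second sort of $\mc{H}$ amounts to recoloring some of the sort-$Y$ atoms of an $\mc{H}$-structure to sort $X$; since the sort-$Y$ atoms of an $\mc{H}$-structure are exactly the vertices of degree $1$, an $\mc{H}(X,X+Y)$-structure on $(V_1,V_2)$ is precisely a connected graph $H$ on $V_1\cup V_2$ in which every vertex of $V_2$ has degree $1$ (the vertices of $V_1$ being unrestricted). On the other side, a $\bigl(\mc{G}^c\circ(X\mc{E}(Y))\bigr)$-structure on $(V_1,V_2)$ consists of a partition of $V_1\cup V_2$, a connected graph on its blocks, and an $X\mc{E}(Y)$-structure on each block; an $X\mc{E}(Y)$-structure on a block $B$ exists exactly when $B$ contains a single element of $V_1$ (its ``center'', the remaining elements being ``leaves'' in $V_2$), and is then unique. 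Hence such a structure is the same data as a function $f\colon V_2\to V_1$ (sending each leaf to its center) together with a connected graph $G'$ on $V_1$ (identifying each block with its center); in particular this forces $V_1\ne\varnothing$.

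Next I would describe the bijection and check it. From a pair $(f,G')$ build the graph $H$ on $V_1\cup V_2$ with edge set $\{\edge{c}{\ell}:\ell\in V_2,\ f(\ell)=c\}\cup E(G')$; then $H$ is connected, each vertex of $V_2$ has degree $1$ in $H$, and no edge of $H$ joins two vertices of $V_2$. Conversely, if $H$ is a connected graph on $V_1\cup V_2$ with every $V_2$-vertex of degree $1$ and no $V_2$--$V_2$ edge, then each $\ell\in V_2$ has a unique neighbor, which necessarily lies in $V_1$; let $f(\ell)$ be that neighbor and set $G'=H[V_1]$. The one substantive point is that $G'$ is connected: a vertex of degree $1$ cannot be an interior vertex of a path, so any path of $H$ between two vertices of $V_1$ already lies in $H[V_1]$, and $V_1\ne\varnothing$ because $H$ is not a single edge with both endpoints in $V_2$. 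Since every edge of $H$ meeting $V_2$ must have the form $\edge{f(\ell)}{\ell}$, the two constructions are mutually inverse; being free of arbitrary choices, they are natural in $(V_1,V_2)$, so they give a combinatorial isomorphism of species.

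Finally I would collect the leftover term: for a connected graph $H$ on $V_1\cup V_2$ with all $V_2$-vertices of degree $1$, the presence of a $V_2$--$V_2$ edge forces $H=K_2$ with both vertices in $V_2$ (both endpoints of such an edge have degree $1$), and conversely any such $K_2$ is of this kind; these structures are exactly the $\mc{E}_2(Y)$-structures. Adding this species to the isomorphism established above yields $\mc{H}(X,X+Y)=\mc{G}^c\circ(X\mc{E}(Y))+\mc{E}_2(Y)$. I expect the only real friction to be the careful bookkeeping around the degenerate graphs $K_1$ and $K_2$ and the constraint $V_1\ne\varnothing$; the heart of the argument is just the short observation that a degree-$1$ vertex cannot be interior to a path, which makes $H[V_1]$ connected.
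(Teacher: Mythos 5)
Your proof is correct and takes essentially the same route as the paper's: split off the $K_2$ with both endpoints of sort $Y$ as the $\mc{E}_2(Y)$ term, and identify every other $\mc{H}(X,X+Y)$-structure with a connected graph on the sort-$X$ vertices together with pendant sort-$Y$ leaves attached to them, i.e., a $\mc{G}^c\circ(X\mc{E}(Y))$-structure. You merely make explicit the details the paper leaves to a picture, notably the verification that the induced graph on $V_1$ is connected.
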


\begin{proof}
An $\mc{H}(X,X+Y)$-structure is a connected graph in which every vertex of degree one has sort either $X$ or $Y$, and every other vertex has sort $X$. Such a graph either is a graph with two vertices, both of sort $Y$, which is an $\mc{E}_2(Y)$-structure, or has at least one vertex of sort $X$.  An $\mc{H}(X, X+Y)$-structure with at least one vertex of sort $X$ consists of a connected graph whose vertices all have sort $X$, together with some additional vertices of sort $Y$, each adjacent to one of the vertices of sort $X$. 

An $X\mc{E}(Y)$-structure is a singleton $X$-structure connected to a set, possibly empty, of $Y$-structures. (See Figure~\ref{f-noend_2sort}.)
\begin{figure}[ht]
\begin{center}
\includegraphics[width=11cm]{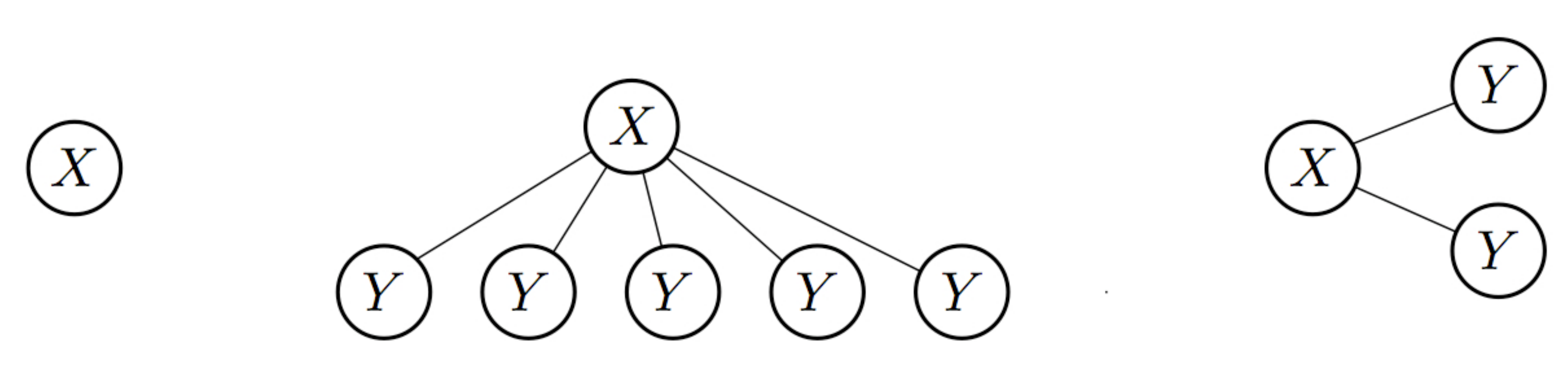}
\end{center}
\caption{\label{f-noend_2sort} Some $X\mc{E}(Y)$-structures.}
\end{figure}
We get a $\mc{G}^c\circ(X\mc{E}(Y))$-structure by replacing each vertex of a connected graph with an $X\mc{E}(Y)$-structure. Such a graph is the same as an $\mc{H}(X, X+Y)$-structure with at least one vertex of sort $X$. 
See Figure~\ref{f-noend_2sort_pf} for an illustration of an  $\mc{H}(X,X+Y)$-structure decomposed into a connected graph with each vertex replaced with an $X\mc{E}(Y)$-structure.
\end{proof}

\begin{figure}[ht]
\begin{center}
\includegraphics[width=17cm]{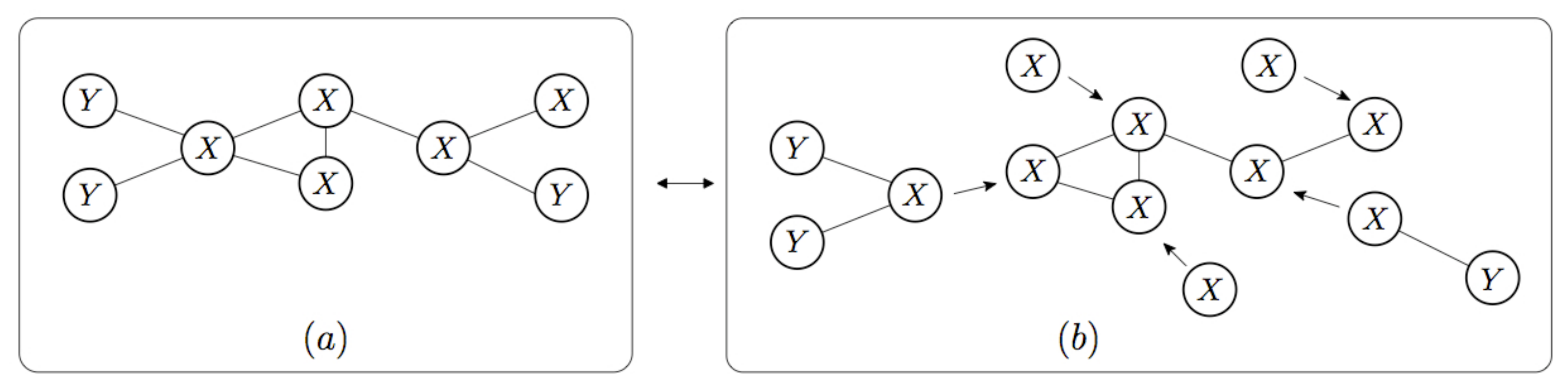}
\end{center}
\caption{\label{f-noend_2sort_pf}$\mc{H} (X,X+Y)-\mc{E}_2(Y)=\mc{G}^c\circ(X\mc{E}(Y))$.}
\end{figure}

Let $\mc{M}$  be the species of \emph{graphs without endpoints}, i.e., graphs without vertices of degree one, including the empty graph. Let $\mc{M}^c$ be the species of connected graphs without endpoints. Since a graph has no endpoints if and only if all of its connected components have no endpoints,  we have
\begin{equation}\label{eqn-noend-conn}
\mc{M}=\mc{E}\circ\mc{M}^c.
\end{equation}

Note that $\mc{M}^c(X)=\mc{H}(X,0)$. Replacing $Y$ with $-X$ in~\eqref{eqn-ira2sort}, we get an expression for the species $\mc{M}^c$ in terms of virtual species
\begin{equation}\label{eqn-ira}
\mc{M}^c=\mc{H}(X,0)=\mc{G}^c\circ(X\mc{E}(-X))+\mc{E}_2(-X).
\end{equation}
The type generating series for $\mc{E}_2(-X)$ is $0$, and the cycle index series for $\mc{E}_2(-X)$ is $(p_1^2-p_2)/2$. We get the associated generating series of $\mc{M}^c$:
\begin{align}
\mc{M}^c(x) &= \mc{G}^c(xe^{-x})+\frac{x^2}{2}, \label{eqn-connnoend-label}\\
\wt{\mc{M}^c}(x) &=Z_{\mc{G}^c}(\wt{X\mc{E}(-X)}(x), \wt{X\mc{E}(-X)}(x^2),\dots), \label{eqn-connnoend-unlabel}\\
Z_{\mc{M}^c} &=Z_{\mc{G}^c}(Z_{X\mc{E}(-X)}\circ p_1, Z_{X\mc{E}(-X)}\circ p_2,\dots)+\frac{1}{2}(p_1^2-p_2).\notag
\end{align}

Formula~\eqref{eqn-connnoend-label} was given by Wright~\cite[p.~206, Theorem 1]{wright} and by Goulden and Jackson~\cite[p.~180, Theorem 1]{goulden} and leads to the exponential generating series of the species $\mc{M}$ of graphs without endpoints
$$\mc{M}(x)=  \biggl(\sum_{n\ge 0}2^{\big(\aatop{n}{2}\big)}\frac{(xe^{-x})^n}{n!} \biggr) \exp\biggl(\frac{x^2}{2}\biggr).$$

\begin{cor}
\label{cor-noendpd}
For the species $\mc{M}$ of graphs without endpoints, $\mc{M}^c$ of connected graphs without endpoints, $\mc{Q}$ of co-point-determining graphs, and $\mc{Q}^c$ of connected co-point-determining graphs, we have the following identities for their type generating series:
\begin{align}
\wt{\mc{M}}(x) &=\wt{\mc{Q}}(x).\label{eqn-mq-unlabel}\\
\wt{\mc{M}^c}(x) &=\wt{\mc{Q}^c}(x), \label{eqn-mcqc-unlabel}
\end{align}
\end{cor}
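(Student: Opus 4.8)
The plan is to deduce both identities from a single observation at the level of symmetric functions: the type generating series of the virtual species $X\mc{E}(-X)$ coincides with $\wt{(1+X)^c}(x)=x-x^2$, even though $X\mc{E}(-X)$ and $(1+X)^c$ are genuinely different virtual species. I would prove the connected case \eqref{eqn-mcqc-unlabel} first and then derive \eqref{eqn-mq-unlabel} from it.

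First I would compute $\wt{X\mc{E}(-X)}(x)$. From $\mc{E}(X)\cdot\mc{E}(-X)=\mc{E}(0)=1$ together with the well-known formula $Z_{\mc{E}}=\exp\bigl(\sum_{k\ge 1}p_k/k\bigr)$ we get $Z_{\mc{E}(-X)}=\exp\bigl(-\sum_{k\ge 1}p_k/k\bigr)$; substituting $p_k=x^k$ gives $\wt{\mc{E}(-X)}(x)=\exp\bigl(-\sum_{k\ge 1}x^k/k\bigr)=1-x$, and hence $\wt{X\mc{E}(-X)}(x)=x(1-x)=x-x^2$, which is exactly $\wt{(1+X)^c}(x)$ as computed in the lemma on $(1+X)^c$.

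For \eqref{eqn-mcqc-unlabel}, formula \eqref{eqn-connnoend-unlabel} (in which the $\mc{E}_2(-X)$ term has already been discarded, its type generating series being $0$) reads $\wt{\mc{M}^c}(x)=Z_{\mc{G}^c}\bigl(\wt{X\mc{E}(-X)}(x),\wt{X\mc{E}(-X)}(x^2),\dots\bigr)=Z_{\mc{G}^c}(x-x^2,x^2-x^4,\dots)$. On the other side, Corollary~\ref{cor-conncpd} gives $\mc{Q}^c=\mc{G}^c\circ(1+X)^c$, so applying the composition formula for type generating series — which continues to hold for a virtual inner species, since it is just the plethystic identity $Z_{\Phi\circ\Psi}=Z_{\Phi}\circ Z_{\Psi}$ specialized at $p_i=x^i$ — together with $\wt{(1+X)^c}(x)=x-x^2$ yields $\wt{\mc{Q}^c}(x)=Z_{\mc{G}^c}(x-x^2,x^2-x^4,\dots)$. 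Thus $\wt{\mc{M}^c}(x)=\wt{\mc{Q}^c}(x)$. For \eqref{eqn-mq-unlabel} I would then use $\mc{M}=\mc{E}\circ\mc{M}^c$ from \eqref{eqn-noend-conn} and $\mc{Q}=\mc{E}\circ\mc{Q}^c$ from Theorem~\ref{thm-connpdcpd}, so that $\wt{\mc{M}}(x)=Z_{\mc{E}}\bigl(\wt{\mc{M}^c}(x),\wt{\mc{M}^c}(x^2),\dots\bigr)$ and likewise for $\mc{Q}$; the equality of $\wt{\mc{M}^c}$ and $\wt{\mc{Q}^c}$ just established then gives $\wt{\mc{M}}(x)=\wt{\mc{Q}}(x)$.

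The main obstacle is conceptual rather than computational: there is no species isomorphism, and not even an equality of cycle indices, between $\mc{M}^c$ and $\mc{Q}^c$ (their molecular decompositions differ), so the statement is genuinely a coincidence at the unlabeled level, and the content of the proof is to localize that coincidence in the single equality $\wt{X\mc{E}(-X)}(x)=\wt{(1+X)^c}(x)$. Everything else is routine bookkeeping with generating series, the only point requiring care being the justification, indicated above, of the composition formula for type generating series when the inner species is virtual.
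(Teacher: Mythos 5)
Your proposal is correct and follows essentially the same route as the paper: both proofs hinge on the single computation $\wt{X\mc{E}(-X)}(x)=x\exp\bigl(-\sum_{k\ge 1}x^k/k\bigr)=x-x^2=\wt{(1+X)^c}(x)$, then compare \eqref{eqn-connnoend-unlabel} with the type-generating-series form of $\mc{Q}^c=\mc{G}^c\circ(1+X)^c$, and finally pass from the connected to the general case via $\mc{M}=\mc{E}\circ\mc{M}^c$ and $\mc{Q}=\mc{E}\circ\mc{Q}^c$. Your added remarks on discarding the $\mc{E}_2(-X)$ term and on the validity of the plethystic composition formula for a virtual inner species are correct and merely make explicit what the paper leaves implicit.
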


\begin{proof}
Let $\mc{B}=X\mc{E}(-X)$. The type generating function of $\mc{B}$ is
\begin{align*}
\wt{\mc{B}}(x) &= x Z_{\mc{E}}(-x, -x^2, \dots) =x \exp\biggl(- \sum_{n\ge 1} \frac{x^n}{n}\biggr) =x-x^2=\wt{(1+X)^c}(x),
\end{align*}
where the virtual species $(1+X)^c$ is the compositional inverse of $\mc{E}_+$. Recall~\eqref{eqn-conncpdconngamma}:
$$\wt{\mc{Q}^c}(x)=Z_{\mc{G}^c}(\wt{(1+X)^c}(x), \wt{(1+X)^c}(x^2),\dots).$$
We get~\eqref{eqn-mcqc-unlabel} from~\eqref{eqn-connnoend-unlabel}. The equation~\eqref{eqn-mq-unlabel} follows from~\eqref{eqn-connpdcpd} and~\eqref{eqn-noend-conn}.
\end{proof}

Kilibarda~\cite{kilibarda} gave a bijective proof of Corollary~\ref{cor-noendpd}. 

Robinson gave a formula~\cite[p.~353, Theorem 8]{Robinson-nonseparable} (see also~\cite[p.~191, equation (8.7.11)]{harary}) for enumerating connected graphs without endpoints. Let $\mc{A}$ be the species of trees, and let $\mc{A}^r$ be the species of rooted trees. Robinson's formula may be expressed in terms of species as
\begin{equation}\label{eqn-speciesrobinson}
\mc{G}^c=\mc{A}+(\mc{M}^c-X)\circ\mc{A}^r,
\end{equation}
which is equivalent to our next result.

\begin{cor}
\label{cor-robinson}
We have another expression for the species $\mc{M}^c$ in terms of virtual species
\begin{equation}
\mc{M}^c=X+(\mc{G}^c-\mc{A})\circ\mc{B},\notag
\end{equation}
where $\mc{B}=X\mc{E}(-X)$, and $\mc{G}^c$ and $\mc{A}$ denote the species of connected graphs and trees, respectively.
\end{cor}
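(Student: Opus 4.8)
The plan is to deduce the corollary from Robinson's identity \eqref{eqn-speciesrobinson} by composing on the right with $\mc{B}=X\mc{E}(-X)$, so the heart of the matter is to recognize $\mc{B}$ as the compositional inverse of the species $\mc{A}^r$ of rooted trees. First I would record the structural equation $\mc{A}^r=X\mc{E}(\mc{A}^r)$ (a rooted tree is a root vertex together with a set of rooted trees hanging from it). Combining this with the exponential property $\mc{E}(F+G)=\mc{E}(F)\mc{E}(G)$ emphasized in Section~\ref{sec-species}, one gets
\[
\mc{B}\circ\mc{A}^r=\mc{A}^r\,\mc{E}(-\mc{A}^r)=X\,\mc{E}(\mc{A}^r)\,\mc{E}(-\mc{A}^r)=X\,\mc{E}\bigl(\mc{A}^r-\mc{A}^r\bigr)=X.
\]
Since $\mc{A}^r=X+\cdots$ has a (two-sided) compositional inverse $(\mc{A}^r)^{\inverse}$, the relation $\mc{B}\circ\mc{A}^r=X$ forces $\mc{B}=(\mc{A}^r)^{\inverse}$ (apply $\,\cdot\circ(\mc{A}^r)^{\inverse}$), and in particular $\mc{A}^r\circ\mc{B}=X$ as well.

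Next I would compose \eqref{eqn-speciesrobinson}, namely $\mc{G}^c=\mc{A}+(\mc{M}^c-X)\circ\mc{A}^r$, on the right with $\mc{B}$. Using associativity of composition, additivity of $\Phi\mapsto\Phi\circ\mc{B}$ on virtual species, the identity $\mc{A}^r\circ\mc{B}=X$ just established, and $(\mc{M}^c-X)\circ X=\mc{M}^c-X$, this yields
\[
\mc{G}^c\circ\mc{B}=\mc{A}\circ\mc{B}+(\mc{M}^c-X),
\]
and rearranging gives exactly $\mc{M}^c=X+(\mc{G}^c-\mc{A})\circ\mc{B}$. Conversely, composing the corollary on the right with $\mc{A}^r$ and using $\mc{B}\circ\mc{A}^r=X$ recovers \eqref{eqn-speciesrobinson}, which is why the excerpt calls the two statements equivalent.

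I do not expect a genuine obstacle; the only point requiring care is that $\mc{B}$ is the compositional inverse of $\mc{A}^r$ not merely at the level of exponential generating series (where it is the familiar functional inverse $y\mapsto ye^{-y}$ of the tree function) but as virtual species, i.e.\ at the level of cycle indices — and this is precisely what the identity $\mc{B}\circ\mc{A}^r=X$ above secures. If one prefers an argument not invoking Robinson's externally cited formula, the same result follows from the already proved identity \eqref{eqn-ira}, $\mc{M}^c=\mc{G}^c\circ\mc{B}+\mc{E}_2(-X)$, together with the dissymmetry theorem for trees $\mc{A}=\mc{A}^r+\mc{E}_2(\mc{A}^r)-(\mc{A}^r)^2$: composing the latter with $\mc{B}$ and using $\mc{A}^r\circ\mc{B}=X$ gives $\mc{A}\circ\mc{B}=X+\mc{E}_2-X^2=X-\mc{E}_2(-X)$, the last equality being the cycle-index fact $Z_{\mc{E}_2(-X)}=(p_1^2-p_2)/2$ recorded before \eqref{eqn-connnoend-label}; substituting into \eqref{eqn-ira} then produces $\mc{M}^c=X+(\mc{G}^c-\mc{A})\circ\mc{B}$.
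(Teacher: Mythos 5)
Your proposal is correct, and your fallback argument is essentially the paper's proof. The paper does not assume Robinson's identity \eqref{eqn-speciesrobinson}: it starts from the dissymmetry theorem $\mc{A}^r+\mc{E}_2\circ\mc{A}^r=\mc{A}+(\mc{A}^r)^2$, rewrites it as $\mc{A}=(X-\mc{E}_2(-X))\circ\mc{A}^r$ via the virtual-species identity $\mc{E}_2(-X)=X^2-\mc{E}_2(X)$, composes on the right with $\mc{B}$ to get $\mc{A}\circ\mc{B}=X-\mc{E}_2(-X)$, and substitutes into \eqref{eqn-ira} --- exactly your second argument. Your primary route, which takes Robinson's externally cited formula as an input and composes it with $\mc{B}$, inverts the paper's logic: the paper derives the corollary independently, so that the corollary together with \eqref{eqn-ira} amounts to a species-level proof of \eqref{eqn-speciesrobinson}, whereas your main route would make the corollary depend on it. Two smaller remarks. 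Your derivation of $\mc{B}\circ\mc{A}^r=X$ from $\mc{A}^r=X\mc{E}(\mc{A}^r)$ and $\mc{E}(F+G)=\mc{E}(F)\mc{E}(G)$ is a nice self-contained substitute for the paper's bare citation that $\mc{B}=(\mc{A}^r)^{\inverse}$. On the other hand, justifying $X+\mc{E}_2-X^2=X-\mc{E}_2(-X)$ solely by the cycle-index computation $Z_{\mc{E}_2(-X)}=(p_1^2-p_2)/2$ is in principle too weak: equality of cycle indices does not in general imply equality of (virtual) species, so one should invoke the species-level identity $\mc{E}_2(-X)=X^2-\mc{E}_2(X)$ directly, as the paper does.
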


\begin{proof}
We start with the dissymmetry theorem for trees~\cite[p.~280, Theorem 1]{species-book} $$\mc{A}^r+\mc{E}_2\circ\mc{A}^r=\mc{A}+(\mc{A}^r)^2,$$ and rewrite it in terms of virtual species $$\mc{A}=(X+\mc{E}_2-X^2)\circ \mc{A}^r.$$ We apply the identity for virtual species~\cite[p.~128]{species-book} $$\mc{E}_2(-X)=X^2-\mc{E}_2(X),$$ and get $$\mc{A} = (X-\mc{E}_2(-X))\circ \mc{A}^r.$$ Since $\mc{B}$ is the compositional inverse of the species $\mc{A}^r$ of rooted trees (see~\cite[p.~132]{species-book}), we have $$\mc{A}\circ\mc{B}=X-\mc{E}_2(-X).$$ The result follows from \eqref{eqn-ira}.
\end{proof}

Equation~\eqref{eqn-speciesrobinson} also appeared in~\cite[p.~303, Example 5]{species-book} as an application of the dissymmetry theorem for graphs~\cite[p.~301, Theorem 3]{species-book}.

\section{Bi-Point-Determining Graphs}
\label{sec-bipd}
\begin{dfn}
\label{dfn-cograph}
A \emph{cograph}, also called a \emph{complement-reducible graph} is defined recursively as follows (see~\cite{corneil}):
\begin{enumerate}[(i)]
\item A graph on a single vertex is a cograph.
\item For a set of cographs $\{G_1, \dots, G_n\}$, their union $E_n|_{G_1, \dots, G_n}$ is also a cograph.
\item If $G$ is a cograph, then so is its complement.
\end{enumerate}
\end{dfn}

Note that the complement of $E_n|_{G_1, \dots, G_n}$ is $K_n|_{H_1, \dots, H_n}$ where each $H_i$ is the complement of $G_i$. It follows from the definition that the join of a set of cographs is a cograph. Let $\mc{C}$ be the species of cographs, and let $\mc{C}^c$ be the species of connected cographs.

\begin{lem}
\label{lem-cograph}
The species $\mc{C}$ of cographs satisfies the combinatorial equality $$\mc{C}=\mc{E}_+\circ\biggl(\frac{\mc{C}+X}{2}\biggr).$$
\end{lem}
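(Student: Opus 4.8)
## Proof Proposal

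The plan is to set up a bijection between cographs and certain structures built from connected cographs, and then to identify the building blocks explicitly. The key structural observation is that cographs are closed under complementation, and that in the recursive Definition~\ref{dfn-cograph}, a connected cograph on more than one vertex must arise via rule (iii) from a disconnected cograph — equivalently, a connected cograph on $\ge 2$ vertices is the join of a set of (at least two) cographs. So connected cographs split into the single-vertex cograph $X$ together with joins $K_n|_{G_1,\dots,G_n}$ with $n\ge 2$, where each $G_i$ is a cograph. Dually, every cograph is a union (as a set) of connected cographs, which gives $\mc{C}=\mc{E}\circ\mc{C}^c$ as the first basic identity.

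First I would establish $\mc{C}=\mc{E}\circ\mc{C}^c$ directly from the fact that a graph is a cograph iff all its connected components are cographs (both directions follow from the recursive definition and the complementation rule, since complementing a union means joining the complemented pieces). Next, for the connected piece, I would argue that $\mc{C}^c=X+\mc{K}_+\diamond\mc{C}_{\ge 2}'$ in the sense of superimposition, where $\mc{C}_{\ge 2}'$ is the species whose structures are cographs on $\ge 2$ vertices — no wait, more carefully: a connected cograph on $\ge 2$ vertices is uniquely the join of the set of connected components of its complement, each of which is a cograph. The uniqueness needed to invoke Lemma~\ref{lem-super} (via Lemma~\ref{lem-supercomposition}) comes from the fact that the complement of a connected cograph on $\ge 2$ vertices is disconnected, and its decomposition into connected components is canonical. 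This should give $\mc{C}^c = X + \mc{K}_+\circ\mc{C}$ read through superimposition — but I must take care that $\mc{K}_+\circ\mc{C}$ allows the trivial join of a single cograph, which would double-count; the join must be of $n\ge 2$ pieces. So the honest identity is $\mc{C}^c = X + (\mc{K}_{\ge 2})\diamond\mc{C}$, and by Lemma~\ref{lem-supercomposition}, since $\mc{K}$ and $\mc{E}$ are isomorphic species, $\mc{K}_{\ge 2}\diamond\mc{C} \cong \mc{E}_{\ge 2}\circ\bar{\mc{C}} = \mc{E}_{\ge 2}\circ\mc{C}$ (cographs being closed under complement, $\bar{\mc{C}}=\mc{C}$). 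Hence $\mc{C}^c = X + \mc{E}_{\ge 2}\circ\mc{C}$.

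Then I would combine the two. From $\mc{C}=\mc{E}\circ\mc{C}^c$ and $\mc{C}^c = X + \mc{E}_{\ge 2}(\mc{C})$ we get
\[
\mc{C} = \mc{E}\bigl(X + \mc{E}_{\ge 2}(\mc{C})\bigr) = \mc{E}(X)\cdot\mc{E}\bigl(\mc{E}_{\ge 2}(\mc{C})\bigr),
\]
using the exponential property $\mc{E}(F+G)=\mc{E}(F)\mc{E}(G)$. Now $\mc{E}\circ\mc{E}_{\ge 2} = \mc{E}_+\circ\mc{E}_+ \,/\,$—here I recall the standard species identity that iterating set-of-nonempty-sets relates to $\mc{E}_+$ composed with itself; more directly, $\mc{E}\circ\mc{E}_{\ge 2}(\mc{C})$ counts partitions of a $\mc{C}$-structured underlying set into blocks of size $\ge 2$. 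The cleanest route is to observe $\mc{E}(\mc{E}_{\ge 2}(F)) = \mc{E}_+(F)\big/ \dots$ — rather than hunt for the slickest form, I would directly verify, using $\mc{E}=\mc{E}_+ + 1$ and $\mc{E}_+ = X + \mc{E}_{\ge 2}$, that $\mc{C}/\mc{E}(X)$ satisfies the right fixed-point equation, eventually massaging the relation into $\mc{C} = \mc{E}_+\circ\bigl((\mc{C}+X)/2\bigr)$. The main obstacle I anticipate is precisely this last algebraic manipulation: correctly handling the interaction between the "$+X$ or complement" dichotomy at the connected level and the set-partition structure so that the factor of $1/2$ and the $(\mc{C}+X)$ emerge — the $1/2$ should come from the fact that a connected cograph on $\ge 2$ vertices and its complement are interchanged by a single complementation, so the recursion double-counts unless one quotients, while the extra $X$ accounts for the single-vertex cograph being its own complement. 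I would pin this down by passing to cycle indices if the species-level bookkeeping gets delicate, since the identity $\mc{E}(X)\mc{E}(\mc{E}_{\ge 2}(\mc{C})) = \mc{E}_+((\mc{C}+X)/2)$ is then a routine (if slightly involved) check.
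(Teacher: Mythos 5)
Your overall skeleton (decompose into connected components, then decompose a connected cograph via its complement) is in the right spirit, but the central identity you propose, $\mc{C}^c = X + \mc{E}_{\ge 2}\diamond\mc{C}$ (equivalently $X+\mc{K}_{\ge 2}\diamond\mc{C}$), is false, and the error propagates into the final ``massaging'' step, which cannot succeed from that starting point. The problem is that expressing a connected cograph as a join of $\ge 2$ \emph{arbitrary} cographs is not unique, so the hypothesis of Lemma~\ref{lem-super} fails: $K_3$ is both the join of three singletons and the join of $K_2$ with a singleton. Concretely, at $3$ vertices your formula predicts $3$ unlabeled connected cographs ($\mc{E}_3$, plus a $2$-block carrying either $K_2$ or $E_2$), whereas there are only $2$ (namely $K_3$ and $P_3$). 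The canonical join decomposition of a connected cograph $G$ on $\ge 2$ vertices is $G=K_k|_{\bar H_1,\dots,\bar H_k}$ where $H_1,\dots,H_k$ ($k\ge 2$) are the connected components of $\bar G$; the pieces $\bar H_i$ are therefore \emph{co-connected} cographs (complements of connected cographs), not arbitrary ones. Recording $H_i$ rather than $\bar H_i$ on each block gives the correct recursion $\mc{C}^c = X+\mc{E}_{\ge 2}\circ\mc{C}^c$ (the phylogenetic-tree equation the paper notes after the lemma) --- but that recursion alone does not produce $(\mc{C}+X)/2$. Also, since the empty graph is not a cograph here ($\mc{C}_0=0$), your first identity should read $\mc{C}=\mc{E}_+\circ\mc{C}^c$, not $\mc{E}\circ\mc{C}^c$.

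The missing idea --- which you gesture at in your final sentence but never actually isolate --- is that the factor $1/2$ comes from complementation acting as a species isomorphism at the \emph{connected} level, not from quotienting a double count in a recursion. A cograph on $\ge 2$ vertices is connected if and only if its complement is disconnected, and the single vertex is self-complementary and connected; hence complementation gives an isomorphism between $\mc{C}^c$ and $(\mc{C}-\mc{C}^c)+X$, i.e.\ $2\mc{C}^c=\mc{C}+X$, so $\mc{C}^c=(\mc{C}+X)/2$ as an honest (non-virtual) species. Combined with $\mc{C}=\mc{E}_+\circ\mc{C}^c$ this yields the lemma in one line, with no need for cycle-index verification. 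I recommend replacing your join-decomposition step entirely with this complementation bijection.
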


\begin{proof}
Since a cograph is connected if and only if its complement is a disconnected cograph, the species $\mc{C}^c$ of connected cographs is isomorphic to the species $\mc{C}-\mc{C}^c+X$ of disconnected cographs. That is, $\mc{C}^c=\mc{C}-\mc{C}^c+X,$ which gives $$\mc{C}^c=\frac{\mc{C}+X}{2}.$$ On the other hand, each cograph consists of at least one connected component each of which is a $\mc{C}^c$-structure. This gives $$\mc{C}=\mc{E}_+\circ\mc{C}^c=\mc{E}_+\circ\biggl(\frac{\mc{C}+X}{2}\bigg).$$
\end{proof}

Note that the species $\mc{C}^c$ of connected cographs satisfies $\mc{C}^c = X + \mc{E}_{\ge 2}\circ \mc{C}^c$, so 
$\mc{C}^c$ is isomorphic to the species of \emph{phylogenetic trees} \cite{corneil, phylogenetic1, schroder}, which are rooted trees with labeled leaves and unlabeled internal vertices, in which every internal vertex has at least two children. Labeled and unlabeled cographs have been counted by Guruswami \cite{guruswami}.

\begin{lem}
\label{lem-cographinverse}
The compositional inverse of the species $\mc{C}$ of cographs is $$\mc{C}^{\inverse}=2(1+X)^c-X.$$
\end{lem}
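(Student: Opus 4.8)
The plan is to use Lemma~\ref{lem-cograph}, which gives the combinatorial equality $\mc{C}=\mc{E}_+\circ\bigl((\mc{C}+X)/2\bigr)$, together with the known fact that $(1+X)^c$ is the compositional inverse of $\mc{E}_+$. First I would rewrite the functional equation in a form where $\mc{E}_+$ acts on something whose inverse I can substitute. Setting $\mc{D}=(\mc{C}+X)/2$, Lemma~\ref{lem-cograph} reads $\mc{C}=\mc{E}_+\circ\mc{D}$, so composing on the left with the inverse $(1+X)^c$ of $\mc{E}_+$ yields $(1+X)^c\circ\mc{C}=\mc{D}=(\mc{C}+X)/2$, i.e.
\[
2\,\bigl((1+X)^c\circ\mc{C}\bigr)=\mc{C}+X.
\]
The next step is to subtract $X$ (as composition with $\mc{C}$ on the right — note $X\circ\mc{C}=\mc{C}$ is not what I want; rather $X$ here is the singleton species, and I should view $X$ on the right side as $X\circ\mc{C}$ only after I have rearranged) — more carefully, I would write $2(1+X)^c\circ\mc{C}-X\circ\mc{C}=\bigl(2(1+X)^c-X\bigr)\circ\mc{C}=\mc{C}$, using linearity of composition on the right in its first argument (distributivity of $\circ$ over virtual-species addition, valid since $\mc{C}(0)=0$ so $\mc{C}$ is composable on the right). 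Here the identity $X\circ\mc{C}=\mc{C}$ is exactly the statement that $X$ is the identity for composition. This gives $\bigl(2(1+X)^c-X\bigr)\circ\mc{C}=X$, which says precisely that $2(1+X)^c-X$ is the compositional inverse of $\mc{C}$, hence $\mc{C}^{\inverse}=2(1+X)^c-X$.

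The one genuine point requiring care, and the step I expect to be the main obstacle, is justifying that $\mc{C}$ admits a compositional inverse at all and that the virtual-species manipulations above are legitimate. A species $F$ has a compositional inverse (a virtual species $F^{\inverse}$ with $F\circ F^{\inverse}=F^{\inverse}\circ F=X$) precisely when $F[\varnothing]=\varnothing$ and $|F[\{*\}]|=1$; since every cograph has at least one vertex and the only cograph on a single vertex is the one-vertex graph (clause (i) of Definition~\ref{dfn-cograph}), $\mc{C}$ satisfies $\mc{C}=X+\mc{C}_{\ge 2}$ and so such an inverse exists and is unique in the ring of virtual species. Given existence, the algebraic rearrangement is routine: composition is associative and is additive in its left argument, so applying $(1+X)^c\circ(-)$ to both sides of $\mc{C}=\mc{E}_+\circ\mc{D}$ and using $(1+X)^c\circ\mc{E}_+=X$ is valid.

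As a sanity check one could verify the exponential generating series: $(1+X)^c(x)=\log(1+x)$, so $\bigl(2(1+X)^c-X\bigr)(x)=2\log(1+x)-x$, and one expects $\mc{C}(2\log(1+x)-x)=x$; equivalently, writing $y=\mc{C}(x)$, one should have $2\log(1+y)-y=x$. Indeed Lemma~\ref{lem-cograph} gives $y=\exp\bigl((x+y)/2\bigr)-1$ at the level of generating series, i.e. $\log(1+y)=(x+y)/2$, which rearranges to $x=2\log(1+y)-y$ — confirming the inverse relationship and serving as a concrete check that no sign or factor has been dropped.
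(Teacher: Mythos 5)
Your argument is correct and takes essentially the same route as the paper's own proof: compose Lemma~\ref{lem-cograph} on the left with $(1+X)^c$ to get $(1+X)^c\circ\mc{C}=(\mc{C}+X)/2$, rearrange to $(2(1+X)^c-X)\circ\mc{C}=X$, and invoke existence and uniqueness of the compositional inverse from $\mc{C}_0=0$, $\mc{C}_1=X$. The only blemish is the stray ``$=\mc{C}$'' ending your displayed chain, which should be $=X$ since $2(1+X)^c\circ\mc{C}-X\circ\mc{C}=(\mc{C}+X)-\mc{C}=X$, exactly as your following sentence correctly states.
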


\begin{proof}
Recall that $(1+X)^c\circ \mc{E}_+=X.$ It follows from Lemma~\ref{lem-cograph} that
$$(1+X)^c\circ\mc{C}=\frac{\mc{C}+X}{2}.$$
Therefore, $$2(1+X)^c\circ\mc{C}-\mc{C}=(2(1+X)^c-X)\circ\mc{C}=X.$$
Since the species $\mc{C}$ satisfies $\mc{C}_0=0, \mc{C}_1=X$, by Proposition 19 on~\cite[p.~130]{species-book} there exists a unique virtual species $\mc{C}^{\inverse}$ such that
$$\mc{C}^{\inverse}\circ\mc{C}=\mc{C}\circ\mc{C}^{\inverse}=X.$$
The result follows.
\end{proof}

A \emph{bi-point-determining graph} is a point-determining graph whose complement is also point-determining. As we noted earlier, a graph is bi-point-determining if and only if its automorphism group contains no transpositions, and it is not difficulty to show that the automorphism group of a bi-point-determining graph cannot contain any 3-cycles or 4-cycles.

The following theorem is the key to enumerating bi-point-determining graphs.

\begin{thm}
\label{thm-bipd}
The species $\mc{G}$ of graphs is the composition of the species $\mc{B}$ of bi-point-determining graphs and $\mc{C}$ of cographs. That is, $$\mc{G}= \mc{B}\circ\mc{C}.$$
\end{thm}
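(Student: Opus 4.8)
The plan is to mimic the proof of Theorem~\ref{thm-pdcpd} using the superimposition machinery of Lemma~\ref{lem-super}, with cographs playing the role that edgeless graphs played for point-determining graphs. Concretely, I would show that every graph $G$ can be expressed \emph{uniquely} as a superimposition $B|_{H_1,\dots,H_m}$ in which $B$ is bi-point-determining and each $H_i$ is a cograph; then $\mc{G}=\mc{B}\diamond\mc{C}$, and the uniqueness hypothesis lets Lemma~\ref{lem-super} upgrade this to the combinatorial equality $\mc{G}=\mc{B}\circ\mc{C}$. The key notion is the right equivalence relation on $V(G)$: call two vertices $u,v$ equivalent if they have either the same neighborhood or the same closed neighborhood — more precisely, take the transitive closure of the relation ``$N(u)\setminus\{v\}=N(v)\setminus\{u\}$''. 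One should check that each equivalence class, with the induced subgraph, is exactly a cograph (the recursive structure of Definition~\ref{dfn-cograph} matches the recursive merging of weak-sibling and strong-sibling classes), and that contracting each class to a point yields a graph with no weak siblings and no strong siblings, i.e. a bi-point-determining graph $B$.

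The steps, in order, would be: (1) define the equivalence relation $\sim$ on $V(G)$ and verify it is well-defined (the transitive closure step matters — two vertices that are strong siblings of a common neighbor need not themselves be siblings, but the fibers must still be cographs, so some care is needed here); (2) show each $\sim$-class induces a cograph on $G$, by induction on its size, distinguishing whether the class decomposes as a union or as a join at the top level (this uses that within a class, ``same neighborhood'' and ``same closed neighborhood'' partition the internal adjacency structure in the cograph way, and that a cograph is exactly a graph every induced subgraph of which on $\ge 2$ vertices has two weak or two strong siblings); (3) show the quotient graph $B$ on the $\sim$-classes is bi-point-determining; (4) check that $G=B|_{H_1,\dots,H_m}$ where $H_i$ are the induced cographs; (5) conversely, check that if $G=B'|_{H'_1,\dots,H'_m}$ with $B'$ bi-point-determining and each $H'_i$ a cograph, then the $V(H'_i)$ are forced to be the $\sim$-classes and $B'=B$, giving uniqueness; (6) invoke Lemma~\ref{lem-super} to conclude $\mc{G}=\mc{B}\circ\mc{C}$. (One can also phrase the uniqueness in step~(5) as in Theorem~\ref{thm-pdcpd}: within a fiber no quotient-level vertex can be split off, because cographs are precisely the graphs whose every $\ge 2$-vertex induced subgraph has a pair of siblings of one of the two kinds.)

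The main obstacle, and the place I would spend the most effort, is step~(2): proving that the fibers of the contraction are \emph{exactly} cographs, and that the equivalence relation is the coarsest one whose quotient is bi-point-determining. The delicate point is that repeatedly identifying weak siblings and strong siblings — which is how one naturally reduces an arbitrary graph to a bi-point-determining one — does not obviously terminate in a canonical object, and does not obviously produce cograph fibers, because after one round of identifications new sibling pairs can appear at the quotient level while the fibers accumulated so far must retain cograph structure. I would handle this by using the standard characterization of cographs (equivalently $P_4$-free graphs, or graphs built by union and join) together with the recursive cotree/phylogenetic-tree structure already noted after Lemma~\ref{lem-cograph}: the cotree of each fiber records exactly the nested union/join operations, its leaves are the original vertices, and contracting the fiber corresponds to deleting that entire cotree and replacing it by a single leaf. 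Matching this up cleanly with the recursive definition of $\mc{C}$ and verifying that the quotient is bi-point-determining (no transpositions, equivalently no weak or strong siblings) is the crux; everything else is a routine application of Lemma~\ref{lem-super}, exactly parallel to Theorem~\ref{thm-pdcpd}.
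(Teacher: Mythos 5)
Your overall strategy---reduce to a unique superimposition $B|_{H_1,\dots,H_m}$ with $B$ bi-point-determining and each $H_i$ a cograph, then apply Lemma~\ref{lem-super}---is the same as the paper's. But the way you propose to produce the decomposition has a genuine gap, and it is exactly at the point you flag as the crux. The equivalence relation you define in step~(1), the transitive closure of ``$u$ and $v$ are weak or strong siblings,'' is not the right one. Take $G=P_3$ with vertices $a-b-c$: the only sibling pair is $\{a,c\}$ (weak siblings), so your relation has classes $\{a,c\}$ and $\{b\}$, and the quotient is $K_2$, which is \emph{not} bi-point-determining (its two vertices are strong siblings). The correct fiber here is all of $\{a,b,c\}$, with quotient a single point; but $a$ and $b$ are neither weak nor strong siblings in $G$, so no transitive closure of the one-step sibling relation on $V(G)$ can find this class. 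The classes can only be obtained by \emph{iterating}: amalgamate a sibling pair, look for new sibling pairs in the quotient, and repeat. Your step~(5) has the same problem: for the point-determining case the fibers are edgeless, so every fiber lies inside a single ``same neighborhood'' class of $G$ and maximality pins the decomposition down; for cograph fibers the vertices of $H_i$ need not be pairwise siblings in $G$ at all, so the argument that the $V(H_i')$ ``are forced to be the $\sim$-classes'' does not go through.

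Once you accept that the fibers are defined by an iterative contraction, the real content of the theorem is that this iteration has a canonical, order-independent outcome---different choices of which sibling pair to amalgamate first could a priori lead to different final decompositions. The paper resolves this by organizing all partial decompositions of a fixed graph $K$ into a digraph and applying Newman's diamond lemma: local confluence is checked directly (the only nontrivial case, a weak-sibling amalgamation versus a strong-sibling amalgamation, is handled by showing the two pairs are disjoint, Lemma~\ref{lem-siblings}), the source is unique, and termination is clear since each step reduces the number of blocks; hence the sink (a decomposition with bi-point-determining quotient) is unique. Along the way one gets for free that each fiber is a cograph, since each amalgamation replaces two cograph fibers by their union or join. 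Your proposal acknowledges that ``new sibling pairs can appear at the quotient level'' and that termination in a canonical object is not obvious, but the cotree/$P_4$-free machinery you invoke does not by itself supply the confluence argument; without it (or some equivalent canonical description of the fibers proved correct from scratch) the uniqueness claim, and hence the hypothesis of Lemma~\ref{lem-super}, is unestablished.
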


First we prove a lemma.
Following \cite{corneil}, let us call two distinct vertices \emph{weak siblings} if they have the same neighborhood and \emph{strong siblings} if they have the same closed neighborhood.
\begin{lem}
\label{lem-siblings}
In any graph, if $t$ and $u$ are weak siblings and $v$ and $w$ are strong siblings, then the sets $\{t,u\}$ and $\{v,w\}$ are disjoint.
\end{lem}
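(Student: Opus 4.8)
The plan is to argue by contradiction: suppose $\{t,u\}$ and $\{v,w\}$ are not disjoint, and derive a contradiction with the hypotheses that $t,u$ are weak siblings (so $N(t)=N(u)$, hence $t\not\sim u$) and $v,w$ are strong siblings (so $\bar N(v)=\bar N(w)$, hence $v\sim w$). Since the two unordered pairs share at least one vertex but cannot be equal (one pair consists of non-adjacent vertices, the other of adjacent vertices), they must share exactly one vertex. Without loss of generality, relabel so that $u=v$; the shared vertex then plays the role of both a weak sibling (with $t$) and a strong sibling (with $w$), and $t,w$ are distinct from each other and from $u$.

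The key step is to extract a contradiction from the three vertices $t,u,w$ alone, using only the adjacency relations forced by the sibling conditions. First I would record what we know: $N(t)=N(u)$ and $\bar N(u)=\bar N(w)$, together with $t\not\sim u$ and $u\sim w$. From $u\sim w$ and $N(t)=N(u)$, consider whether $t\sim w$. We have $w\in N(u)=N(t)$, so $t\sim w$. Now use the strong-sibling relation between $u$ and $w$: since $t\sim w$, we have $t\in N(w)\subseteq\bar N(w)=\bar N(u)$, and as $t\ne u$ this forces $t\in N(u)$, i.e. $t\sim u$. This contradicts $t\not\sim u$, which came from $N(t)=N(u)$. (One should double-check the remaining label identifications — the shared vertex could a priori be $t=v$, $t=w$, $u=v$, or $u=w$ — but by the symmetry $N(t)=N(u)$ and $\bar N(v)=\bar N(w)$ these four cases are equivalent after renaming, so treating $u=v$ suffices.)

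The main obstacle, such as it is, is purely bookkeeping: making sure the case reduction "share exactly one vertex, WLOG $u=v$" is justified cleanly, since the statement is symmetric in $t\leftrightarrow u$ and in $v\leftrightarrow w$ but the argument treats the shared vertex asymmetrically. Once that reduction is in place, the contradiction is a two-line chase through the definitions of $N$ and $\bar N$, exactly as the paper's own remark (\emph{if $N(v)=N(w)$ then $v\not\sim w$, and if $\bar N(v)=\bar N(w)$ then $v\sim w$}) anticipates. No species machinery is needed; this is a statement about a single fixed graph.
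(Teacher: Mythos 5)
Your proof is correct and follows essentially the same route as the paper's: assume the pairs overlap in a single vertex (the equal-pairs case being impossible since weak siblings are non-adjacent while strong siblings are adjacent), and then run a short adjacency chase through the three remaining vertices to reach a contradiction. The paper takes the shared vertex to be $t$ (a weak sibling of $u$ and a strong sibling of $v$) and contradicts the adjacency status of $\edge uv$, while you take it to be $u=v$ and contradict that of $\edge tu$; these are the same argument up to relabeling, and your extra care with the WLOG reduction is fine but not a different method.
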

\begin{proof}
Suppose that $t$ and $u$ are weak siblings and that $t$ and $v$ are strong siblings. Since $t$ and $u$ have the same neighborhood, $t$ and $u$ are not adjacent. Since $t$ and $v$ have the same closed neighborhood, $u$ and $v$ are not adjacent but $t$ and $v$ are adjacent.  Now, since $t$ and $u$ have the same neighborhood, $u$ and $v$ are adjacent, a contradiction.
\end{proof}

\begin{proof}
[Proof of Theorem~\ref{thm-bipd}]
We show that every graph can be expressed uniquely as a superimposition of a bi-point-determining graph on a set of cographs. Let $K$ be any graph. We consider the set $S(K)$ of pairs $(G, \{H_1, \cdots, H_m\})$ such that $G|_{H_1,\dots, H_m}=K$ and $H_1,\dots, H_k$ are cographs. First, we note that $S(K)$ is nonempty, because it contains the pair $(K_0, V_0)$, where $V_0$ is the set of singleton graphs on the vertices of $K$ and $K_0$ is the graph obtained from $K$ by replacing each vertex $v$ of $K$ with $\{v\}$. 

Next we make $S(K)$ into a digraph. We say that there is an edge from $(G,\{H_1, \dots, H_m\})$ to $(G', \{H_1,\dots , H_{m-2},H_{m-1}'\})$ if $V(H_{m-1})$ and $V(H_{m})$, as vertices of $G$, have either the same neighborhood or the same closed neighborhood, $G'$ is obtained from $G$ by replacing vertices $V(H_{m-1})$ and  $V(H_{m})$ with the new vertex $V(H_{m-1})\cup V(H_{m})$ (with the same neighbors that $V(H_{m-1})$ and  $V(H_{m})$ had in $G$), and $H_{m-1}'$ is the induced subgraph of $K$ on $V(H_{m-1})\cup V(H_{m})$.  Note that $H_{m-1}'$ is a cograph since it is either a union or join of two cographs. 

It is clear that the sinks of this digraph are the pairs $(G, \{H_1, \cdots, H_m\})$ in which $G$ is bi-point-determining. Thus to prove the theorem we need to show that $S(K)$ has a unique sink. This will follow from the ``diamond lemma" of Newman 
\cite{newman} if we can prove the following two properties of $S(K)$.
\begin{enumerate}
\item $S(K)$ has a unique source, the pair $(K_0,V_0)$ defined above.
\item If $\alpha$, $\beta$, and $\gamma$ are vertices of $S(K)$ such that there is an edge in $S(K)$ from $\alpha$ to $\beta$ and an edge from $\alpha$ to $\gamma $, then there is a vertex $\delta$ of $S(K)$ such that there is an edge from $\beta$ to $\delta$ and an edge from $\gamma$ to $\delta $.
\end{enumerate}

To prove (1), we note that $(K_0,V_0)$ is the only pair  $(G, \{H_1, \cdots, H_m\})$ in $S(K)$ for which $H_1$, \dots, $H_m$ are all singleton graphs. So it is enough to show that if $(G, \{H_1, \cdots, H_m\})\in S(K)$ and $H_m$ is not a singleton graph, then $(G, \{H_1, \cdots, H_m\})$ has a predecessor in $S(K)$ (i.e., there is an edge from some element of $S(K)$ to $(G, \{H_1, \cdots, H_m\})$).  Since $H_m$ is a cograph that is not a singleton graph, $H_m$ can be expressed as either a join or union of two nonempty cographs. Suppose first that $H_m$ is the union of $H_m'$ and $H_{m+1}'$. Then $(G', \{H_1, \cdots, H_{m-1}, H_m', H_{m+1}'\})$ is a predecessor of $(G, \{H_1, \cdots, H_m\})$, where $G'$ is obtained from $G$ by ``splitting" vertex $V(H_m)$ into vertices $V(H_m')$ and $V(H_{m+1}')$; these new vertices are adjacent in $G'$ to all the neighbors of $V(H_m)$ in $G$ but not to each other.  The case in which $H_m$ is a join is similar.

To prove property (2), we note that if $\beta$ and $\gamma$ are both obtained from $\alpha$ by amalgamating pairs of vertices with the same neighborhood, or are both obtained by amalgamating pairs with the same closed neighborhood then the existence of $\delta$ is clear. If $\beta$ is obtained by amalgamating a pair of vertices with the same neighborhood  and $\gamma$ is obtained by amalgamating a pair of vertices with the same closed neighborhood, then by Lemma \ref{lem-siblings}, the four amalgamated vertices are all distinct, and thus the existence of $\delta$ is again clear.
\end{proof}

\begin{cor}
\label{cor-enumbipd}
In terms of virtual species, the species of bi-point-determining graphs $\mc{B}$ is related to the species of graphs $\mc{G}$ and the virtual species $(1+X)^c$ in the following way:
\begin{equation}\label{eqn-evaluateR}
\mc{B}=\mc{G}\circ(2(1+X)^c -X).
\end{equation}
\end{cor}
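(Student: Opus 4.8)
The plan is to read off \eqref{eqn-evaluateR} directly from Theorem~\ref{thm-bipd} and Lemma~\ref{lem-cographinverse} by composing with the compositional inverse of $\mc{C}$. First I would recall the combinatorial equality $\mc{G}=\mc{B}\circ\mc{C}$ from Theorem~\ref{thm-bipd}, and the fact from Lemma~\ref{lem-cographinverse} that $\mc{C}$ has a virtual compositional inverse $\mc{C}^{\inverse}=2(1+X)^c-X$ satisfying $\mc{C}\circ\mc{C}^{\inverse}=\mc{C}^{\inverse}\circ\mc{C}=X$.

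Next I would compose both sides of $\mc{G}=\mc{B}\circ\mc{C}$ on the right with $\mc{C}^{\inverse}$. Using associativity of composition, the right-hand side becomes $\mc{B}\circ(\mc{C}\circ\mc{C}^{\inverse})=\mc{B}\circ X=\mc{B}$, while the left-hand side is $\mc{G}\circ\mc{C}^{\inverse}=\mc{G}\circ(2(1+X)^c-X)$. Equating the two gives $\mc{B}=\mc{G}\circ(2(1+X)^c-X)$, which is exactly \eqref{eqn-evaluateR}.

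The only point requiring care — and hence the main (mild) obstacle — is legitimacy of these compositions of virtual species. The substitution $\mc{G}\circ\mc{C}^{\inverse}$ is well-defined precisely because $\mc{C}^{\inverse}$ has no term of degree $0$: the virtual species $(1+X)^c$ is the compositional inverse of $\mc{E}_+$ and so starts in degree $1$, whence $2(1+X)^c-X$ also has vanishing constant term, which is the condition under which an arbitrary species may be substituted into it (cf.\ Proposition~19, \cite[p.~130]{species-book}) — the same condition that guaranteed the existence of $\mc{C}^{\inverse}$. Associativity of composition for virtual species in this range is standard, extending the associativity for ordinary species cited in \cite[p.~53, Exercise~1]{species-book}, so the manipulation above is valid. (One could instead verify the identity separately for the three associated series by plethystic substitution of the series of $2(1+X)^c-X$ into those of $\mc{G}$, but the species-level argument is cleaner and immediate.)
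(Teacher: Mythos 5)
Your proposal is correct and matches the paper's argument, which likewise deduces \eqref{eqn-evaluateR} immediately from Theorem~\ref{thm-bipd} and Lemma~\ref{lem-cographinverse} by composing $\mc{G}=\mc{B}\circ\mc{C}$ with $\mc{C}^{\inverse}=2(1+X)^c-X$. Your added remarks on the vanishing constant term and associativity of composition for virtual species only make explicit what the paper leaves implicit.
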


\begin{proof}
It follows immediately from Theorem~\ref{thm-bipd} and Lemma~\ref{lem-cographinverse}.
\end{proof}

Equation~\eqref{eqn-evaluateR} gives rise to identities for
computing the associated series of $\mc{B}$:
\begin{align}
\mc{B}(x)&=\mc{G}(2\log(1+x)-x), \notag\\
\wt{\mc{B}}(x)&=Z_{\mc{G}}(x-2x^2, x^2-2x^4, \dots),\notag \\
Z_{\mc{B}}&=Z_{\mc{G}}\biggl( 2\sum_{k\ge 1}\frac{\mu_k}{
k}\,\log(1+p_k)-p_1, 2\sum_{k\ge 1}\frac{\mu_k }{
k}\,\log(1+p_{2k})-p_2, \dots \biggr). \notag
\end{align}

There are no bi-point-determining graphs on $3$ vertices. The unlabeled bi-point-determining graphs with $4$ or $5$ vertices
are shown in Figure~\ref{f-bipd_unlabel}.
\begin{figure}[ht]
\begin{center}
\includegraphics[width=15cm]{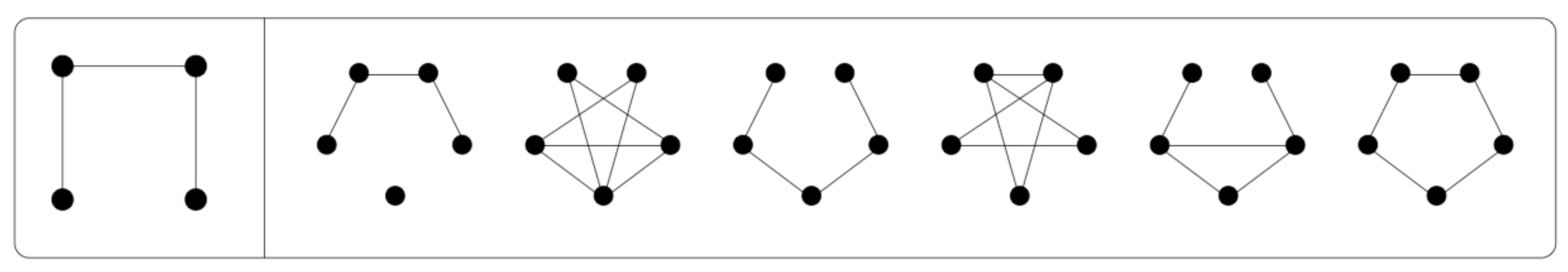}
\end{center}
\caption[\ \ Unlabeled bi-point-determining graphs.]{\label{f-bipd_unlabel} Unlabeled bi-point-determining graphs on $n$ vertices, $n=4,5$.}
\end{figure}

Next we enumerate connected bi-point-determining graphs.

\begin{cor}
\label{cor-connbipd}
The species $\mc{B}^c$ of connected bi-point-determining graphs is expressed, in terms of virtual species, by both of the following combinatorial identities
\begin{align}
\mc{B}^c &=(1+X)^c\circ\mc{B}_+-(1+X)^c+X,\label{eqn-bipdconn1}\\
\mc{B}^c &=\mc{G}^c\circ(2(1+X)^c-X)-(1+X)^c+X, \label{eqn-bipdconn2}
\end{align}
where $\mc{B}_+$ is the species of nonempty bi-point-determining graphs, $\mc{G}^c$ is the species of connected graphs, and $(1+X)^c$ is the compositional inverse of $\mc{E}_+$.
\end{cor}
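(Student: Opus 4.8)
The plan is to establish the structural identity
$$\mc{B}=(1+X)\cdot(\mc{E}\circ\mc{B}^c_{\ge 2}),$$
the bi-point-determining analogue of the first equality of Theorem~\ref{thm-connpdcpd}, and then to extract $\mc{B}^c$ from it by the same ``combinatorial logarithm'' manipulation used for Corollary~\ref{cor-conncpd}. Once~\eqref{eqn-bipdconn1} is in hand, \eqref{eqn-bipdconn2} will follow by substituting the formula $\mc{B}=\mc{G}\circ(2(1+X)^c-X)$ of Corollary~\ref{cor-enumbipd}.

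First I would prove the structural identity. In one direction: a bi-point-determining graph $G$ has at most one isolated vertex (two would share the empty neighborhood), and every connected component $G_i$ of $G$ is again bi-point-determining, since for $u,v$ lying in the same component we have $N_G(u)=N_{G_i}(u)$ and $\bar N_G(u)=\bar N_{G_i}(u)$, so $G_i$ inherits both properties from $G$; deleting the isolated vertex, if present, leaves a set of connected bi-point-determining graphs each on at least two vertices. In the other direction, I would check that any disjoint union of an optional isolated vertex with connected bi-point-determining graphs on at least two vertices each is bi-point-determining. It is point-determining because vertices in distinct such components have nonempty neighborhoods contained in disjoint vertex sets, while the isolated vertex alone has the empty neighborhood. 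It is co-point-determining by applying the same reasoning to the complement: for a vertex $u$ in a component $G_i$ with $|V(G_i)|\ge2$, the closed neighborhood $\bar N_{\bar G}(u)$ meets $V(G_i)$ in a proper subset (as $u$ is not isolated in $G_i$) but contains all of $V(G)\setminus V(G_i)$, so it cannot equal the closed neighborhood in $\bar G$ of any vertex outside $G_i$, and within $G_i$ distinctness follows since $G_i$ is co-point-determining. Since the isolated vertex and the family of remaining components are canonically determined by $G$, this is a genuine combinatorial equality.

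Now comes the species bookkeeping. Using $1+X=\mc{E}\circ(1+X)^c$ together with $\mc{E}(F+G)=\mc{E}(F)\mc{E}(G)$, rewrite the structural identity as $\mc{B}=\mc{E}\circ\mathcal{Y}$, where $\mathcal{Y}=(1+X)^c+\mc{B}^c_{\ge 2}$ has no structure on the empty set. Hence $\mc{B}_+=\mc{E}_+\circ\mathcal{Y}$, and composing on the left with $(1+X)^c$, using associativity of composition and $(1+X)^c\circ\mc{E}_+=X$ (as in the proof of Lemma~\ref{lem-especies}), gives $(1+X)^c\circ\mc{B}_+=\mathcal{Y}=(1+X)^c+\mc{B}^c_{\ge 2}$. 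Since the one-vertex graph is the only connected bi-point-determining graph on at most one vertex, $\mc{B}^c_{\ge 2}=\mc{B}^c-X$, and solving for $\mc{B}^c$ yields~\eqref{eqn-bipdconn1}. For~\eqref{eqn-bipdconn2}, write $\mc{B}=\mc{G}\circ(2(1+X)^c-X)=\mc{E}\circ\mc{G}^c\circ(2(1+X)^c-X)$ by Theorem~\ref{thm-bipd}, Lemma~\ref{lem-cographinverse}, and $\mc{G}=\mc{E}\circ\mc{G}^c$; since $2(1+X)^c-X$ has no constant term, the identical computation gives $(1+X)^c\circ\mc{B}_+=\mc{G}^c\circ(2(1+X)^c-X)$, and substituting into~\eqref{eqn-bipdconn1} produces~\eqref{eqn-bipdconn2}.

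The only step with real content is the converse half of the structural identity, namely verifying that attaching an isolated vertex to, or taking the disjoint union of, connected bi-point-determining graphs on at least two vertices cannot spoil the co-point-determining property; this reduces to the neighborhood computation in the complement sketched above. Everything after the structural identity is formal manipulation of (virtual) species of exactly the kind already carried out for Theorem~\ref{thm-connpdcpd} and Corollary~\ref{cor-conncpd}.
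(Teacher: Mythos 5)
Your proof is correct and follows essentially the same route as the paper: the same decomposition $\mc{B}=(1+X)\cdot\mc{E}\circ(\mc{B}^c-X)$, the same extraction of $\mc{B}^c$ via $\mc{B}_+=\mc{E}_+\circ((1+X)^c+\mc{B}^c-X)$ and $(1+X)^c\circ\mc{E}_+=X$, and the same substitution of $\mc{B}=\mc{G}\circ(2(1+X)^c-X)$ to obtain the second identity. One harmless slip in your (more detailed) verification of the structural identity: testing distinctness of the closed neighborhoods $\bar N_{\bar G}(u)$ in the complement re-checks that $G$ is point-determining, whereas co-point-determining requires distinct closed neighborhoods $\bar N_G(u)$ in $G$ itself --- which is immediate, since for $u$ in a component $G_i$ with $|V(G_i)|\ge 2$ the set $\bar N_G(u)$ is a subset of $V(G_i)$ of size at least two, so closed neighborhoods of vertices in distinct components are disjoint nonempty sets and the isolated vertex is the unique vertex whose closed neighborhood is a singleton.
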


\begin{proof}
A bi-point-determining graph can have at most one vertex of degree zero and the other connected components are connected bi-point-determining graphs with more than one vertex. Hence we have $$\mc{B}=(1+X)\cdot\mc{E}\circ(\mc{B}^c-X).$$ Since $(1+X)^c$ satisfies $1+X=\mc{E}\circ(1+X)^c$, we have $$\mc{B}_+=\mc{E}_+\circ((1+X)^c+\mc{B}^c-X),$$ and~\eqref{eqn-bipdconn1} follows.

Corollary~\ref{cor-enumbipd} gives $$\mc{B}_+=\mc{G}_+\circ\mc{C}.$$ Since $$(1+X)^c\circ\mc{G}_+=\mc{G}^c,$$ we have $$(1+X)^c\circ \mc{B}_+=\mc{G}^c\circ\mc{C}.$$ Now~\eqref{eqn-bipdconn2} follows from~\eqref{eqn-bipdconn1}.
\end{proof}

Corollary~\ref{cor-connbipd} allows us to enumerate the $\mc{B}^c$-structures based on our enumeration results on $\mc{B}$-structures or $\mc{G}^c$-structures. For example, the exponential generating series of $\mc{B}^c$ can be written in two ways:
\begin{align}
\mc{B}^c(x) &= \log(\mc{B}(x))-\log(1+x)+x, \notag \\
\mc{B}^c(x) &= \mc{G}^c(2\log(1+x)-x) - \log(1+x) +x. \notag
\end{align}

\section{Point-Determining Bicolored Graphs}\label{sec-2color}

A \emph{proper coloring} of a graph is an assignment of colors to the vertices of the graph where no two adjacent vertices are assigned the same color. A \emph{bicolorable graph} is a graph that can be properly colored with two colors. A \emph{bicolored graph} (or \emph{2-colored graph}) is a graph in which all vertices are properly bicolored. The enumeration of bicolorable and bicolored graphs was studied in~\cite{harary0},~\cite{harary5}, and~\cite{hanlon}.  For simplicity, we call the two colors in a bicolored graph white and black.

We denote by $\mc{G}(X,Y)$ the $2$-sort species  of bicolored graphs. To be more specific, for a two-set $U=(W,B)$, $\mc{G}[U]$ is a bicolored graph in which the vertices colored white are elements of $W$ and the vertices colored black are elements of $B$. Furthermore, for any multibijection $\sigma=(\tau_1, \tau_2): (W_1, B_1)\rightarrow(W_2, B_2),$ where $\tau_1$ is a bijection from $W_1$ to $W_2$ and $\tau_2$ is a bijection from $B_1$ to $B_2$, the transport of $\mc{G}(X,Y)$-structures along $\sigma$ is $\mc{G}[\sigma]$, which is a bijection from the set of bicolored graphs with vertex set $(W_1, B_1)$ to the set of bicolored graphs with vertex set $(W_2, B_2)$ that preserves the colors of all vertices. The isomorphism classes of (unlabeled) bicolored graphs are called \emph{color-non-isomorphic} bicolored graphs in~\cite{harary5}.

A quick observation is that in a bicolored graph, each edge must connect one vertex of sort $X$ and one vertex of sort $Y$, and hence there are $2^{mn}$ labeled bicolored graphs with $m$ white vertices and $n$ black vertices. Therefore, the exponential generating series of $\mc{G}(X,Y)$ is $$\mc{G}(x,y)=\sum_{m,n=0}^\infty 2^{mn} \frac{x^m}{m!}\,\frac{y^n}{ n!}.$$

\begin{thm}
\label{thm-2color}
Let $\mc{G}(X,Y)$ be the $2$-sort species of bicolored graphs. Then the cycle index of $\mc{G}(X,Y)$ is given by $$Z_{\mc{G}(X,Y)}=\sum_{m,n\ge 0}  \biggl( \sum_{\lambda \vdash m,\,\mu\vdash n} 2^{\,\sum_{i,j}\gcd(\lambda_i,\,\mu_j)}\,\frac{p_{\lambda}[x]}{z_{\lambda}}\, \frac{p_{\mu}[y]}{z_{\mu}}\biggr).$$
\end{thm}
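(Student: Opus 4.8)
The plan is to compute $\fix\,\mc{G}(X,Y)[\lambda,\mu]$ directly from the combinatorial description of a bicolored graph and then substitute into the defining formula for the cycle index of a $2$-sort species,
$$Z_{\mc{G}(X,Y)}=\sum_{m,n\ge 0}\biggl(\sum_{\lambda\vdash m,\,\mu\vdash n}\fix\,\mc{G}(X,Y)[\lambda,\mu]\,\frac{p_{\lambda}[x]}{z_{\lambda}}\,\frac{p_{\mu}[y]}{z_{\mu}}\biggr).$$
First I would observe that, because no two vertices of the same color are adjacent, a bicolored graph on the two-set $U=(W,B)$ is completely determined by its edge set, which is an \emph{arbitrary} subset of $W\times B$; and the transport of $\mc{G}(X,Y)$-structures along a multibijection $(\tau_1,\tau_2)$ acts on these edge sets precisely through the induced action of $\tau_1\times\tau_2$ on $W\times B$. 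Hence, given a permutation $\sigma$ of $W$ of cycle type $\lambda$ and a permutation $\tau$ of $B$ of cycle type $\mu$, a bicolored graph is fixed by $(\sigma,\tau)$ if and only if its edge set is a union of orbits of the cyclic group generated by $\sigma\times\tau$ acting on $W\times B$. Therefore $\fix\,\mc{G}(X,Y)[\lambda,\mu]=2^{N(\lambda,\mu)}$, where $N(\lambda,\mu)$ is the number of such orbits.

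The second step is the elementary orbit count $N(\lambda,\mu)=\sum_{i,j}\gcd(\lambda_i,\mu_j)$. Decomposing $W$ into the cycles of $\sigma$ and $B$ into the cycles of $\tau$, the set $W\times B$ breaks up as a disjoint union of $(\sigma\times\tau)$-invariant pieces $C_i\times D_j$, where $C_i$ is a $\lambda_i$-cycle of $\sigma$ and $D_j$ a $\mu_j$-cycle of $\tau$. On each such piece the cyclic group $\langle\sigma\times\tau\rangle$ has order $\lcm(\lambda_i,\mu_j)$ and acts freely (the stabilizer of a point $(c,d)$ consists of the powers $k$ with $\lambda_i\mid k$ and $\mu_j\mid k$, i.e. the identity), so every orbit on $C_i\times D_j$ has size $\lcm(\lambda_i,\mu_j)$; since $|C_i\times D_j|=\lambda_i\mu_j$, the number of orbits on that piece is $\lambda_i\mu_j/\lcm(\lambda_i,\mu_j)=\gcd(\lambda_i,\mu_j)$. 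Summing over all $i$ and $j$ gives $N(\lambda,\mu)=\sum_{i,j}\gcd(\lambda_i,\mu_j)$, and substituting $\fix\,\mc{G}(X,Y)[\lambda,\mu]=2^{\sum_{i,j}\gcd(\lambda_i,\mu_j)}$ into the cycle-index formula above yields the claimed identity.

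The only real content here is the orbit count, which is routine; the step that deserves a careful sentence is the equivariant identification of a bicolored graph with a subset of $W\times B$, but this is immediate from the definition of $\mc{G}(X,Y)$ recalled just before the theorem, so I do not anticipate a genuine obstacle. As a consistency check, specializing $Z_{\mc{G}(X,Y)}$ by $p_1[x]=x$, $p_1[y]=y$, and all other $p_k$ to $0$ leaves only the terms $\lambda=1^m$, $\mu=1^n$, for which $z_{1^m}=m!$, $z_{1^n}=n!$, and $N(1^m,1^n)=mn$, recovering the exponential generating series $\mc{G}(x,y)=\sum_{m,n}2^{mn}x^m y^n/(m!\,n!)$ stated above.
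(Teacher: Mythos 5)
Your proof is correct and follows essentially the same route as the paper: identify a bicolored graph with a subset of the edge set of the complete bipartite graph, observe that fixed graphs are unions of orbits of $\sigma\times\tau$, and count $\gcd(\lambda_i,\mu_j)$ orbits on each product of cycles. The only cosmetic difference is that the paper first treats the case of a single $k$-cycle and $l$-cycle and then multiplies over pairs of cycles, whereas you handle the general case in one pass; the content is identical.
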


\begin{proof}
Let $(\lambda, \mu)$ be an ordered pair of partitions, and let $(\sigma, \pi)$ be an ordered pair of permutations with $\sigma$ having cycle type $\lambda$ and $\pi$ having cycle type $\mu$. Let $\fix\,(\sigma,\pi)=\fix\,\mc{G}[\lambda,\mu]$ be the number of bicolored graphs fixed by $(\sigma,\pi)$.

To start with, we consider the simpler case when $\sigma$ is a $k$-cycle and $\pi$ is an $l$-cycle. Let $K_{k,l}$ denote the complete bipartite graph on $[k,l]$, and let $E(K_{k,l})$ be its edge set. Then $|E(K_{k,l})|=kl$. Without loss of generality, we let the labeling of left-hand side vertices of $K_{k,l}$ be $\{1,2,\dots, k\}$, and the labeling of right-hand side vertices of $K_{k,l}$ be $\{ 1', 2', \dots, l'\}$. Then each edge of $K_{k,l}$ is represented by an ordered pair $(i,j')$, for some $i\in[k]$ and $j \in[l]$. The pair of permutations $(\sigma, \pi)$ acts on the set $E(K_{k,l})$ by letting $\sigma$ act on the set $\{1,2,\dots, k\}$ and $\pi$ act on the set $\{ 1', 2', \dots, l'\}$. This action partitions the $kl$ edges of $K_{k,l}$ into orbits $\{A_1, A_2, \dots\}$. We observe that there are $\lcm(k,l)$ edges in each of the orbits, since all edges of the form $(i_r, j_r')$, where $i_r=\sigma^r(i)$ and $j_r=\pi^r(j)$ for some $r=1,2\dots,\lcm(k,l)-1$, are in the same orbit as the edge $(i,j')$, and hence this action of $(\sigma,\pi)$ on the set $E(K_{k,l})$ results in $(kl)/{\lcm(k,l)}=\gcd(k,l)$ orbits. Note that each bicolored graph with vertex set $[k,l]$ can be identified with a subset of $E(K_{k,l})$. If a subset $S$ of $E(K_{k,l})$ is fixed by the pair of permutations $(\sigma,\pi)$, then whenever an edge $(i,j')$ is in $S$, all edges in the same orbit as $(i,j')$ under the action of $(\sigma,\pi)$ on $E(K_{k,l})$ is in $S$ as well. This means that the number of bicolored graphs fixed by the pair of permutations $(\sigma,\pi)$ is the same as the number of subsets of $\{A_1, A_2, \dots, A_{\gcd(k,l)}\}$. Therefore, $$\fix\,(\sigma,\pi)=2^{\gcd(k,l)}.$$

For the general case, we write $\lambda=(\lambda_1,\lambda_2,\dots)$ and $\mu=(\mu_1,\mu_2,\dots)$. It is straightforward to see that each ordered pair $(\lambda_i, \mu_j)$, for some integers $i$ and $j$, gives rise to a factor $2^{\gcd{\lambda_i, \mu_j}}$ in the number $\fix\,(\sigma,\pi)$, and hence $$\fix\,\mc{G}[\lambda,\mu]=\prod_{i,j} 2^{\,\gcd(\lambda_i,\,\mu_j)}=2^{\sum_{i,j}\gcd(\lambda_i,\,\mu_j)}.$$
\end{proof}

Theorem~\ref{thm-2color} enables us to compute the associated series of $\mc{G}(X,Y)$. Equations~\eqref{eqn-2color-label},~\eqref{eqn-2color-unlabel}, and~\eqref{eqn-2colorconn-unlabel} appeared in~\cite{harary0} and~\cite{harary5}. The argument given in the proof of Theorem~\ref{thm-2color} also gives a way to count bicolored graphs by the number of edges. Let $b_{m,n}(x)$ be the ordinary generating function for bicolored graphs, in which $m$ vertices are colored white and $n$ vertices colored black, by the number of edges. We get the following expression for $b_{m,n}(x)$, which agrees with the result of Harary and Palmer~\cite[p.~95]{harary}: $$b_{m,n}(x)=\sum_{\lambda \vdash m,\,\mu\vdash n} \frac{1}{z_{\lambda}z_{\mu}} \, \prod_{k,l=1}^{m,n} \biggl(1+x^{\lcm(k,\,l)}\biggr)^{c_k(\lambda) c_l(\mu) \gcd(k,\,l)},$$ where $c_i(\lambda)$ denotes the number of parts in $\lambda$ with length $i$. As illustrated in Figure~\ref{f-b23_x4}, there are three unlabeled bicolored graphs with four edges and five vertices, two colored white, three colored black, hence the coefficient of $x^4$ in $b_{2,3}(x)$ is $3$.
\begin{figure}[ht]
\begin{center}
\includegraphics[width=8cm]{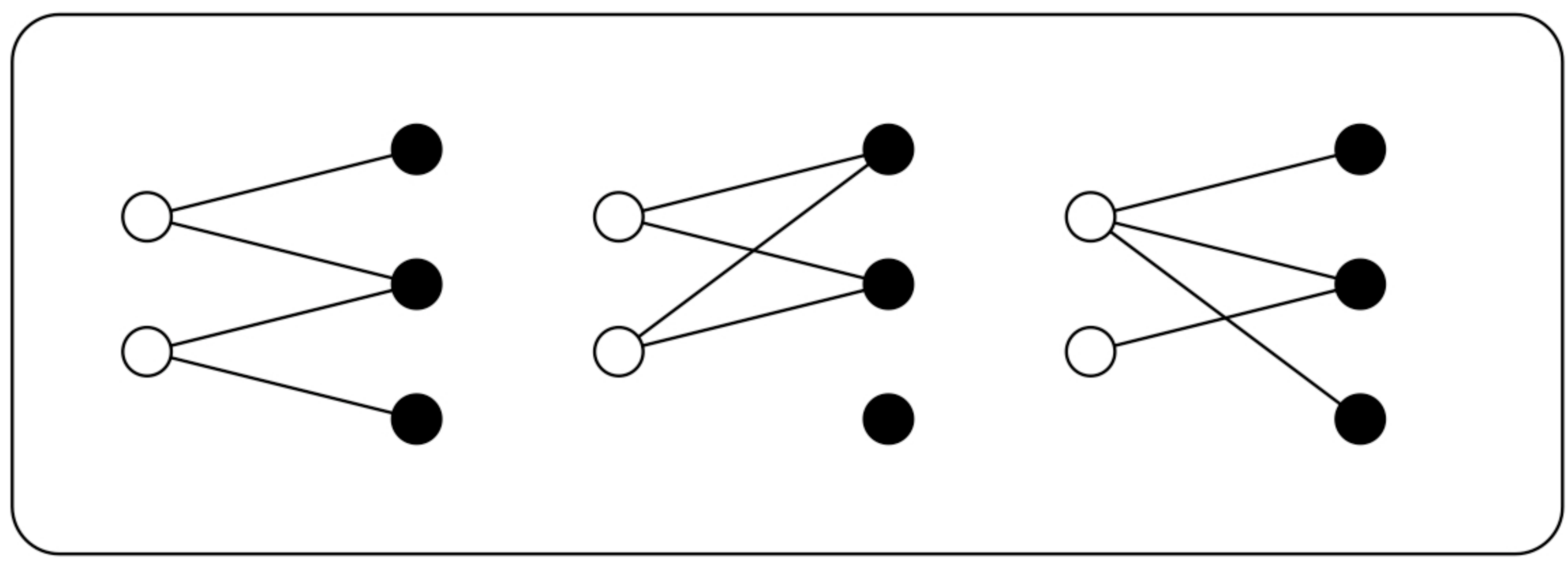}
\end{center}
\caption{\label{f-b23_x4} Unlabeled bicolored graphs with four edges and five vertices.}
\end{figure}

The canonical decomposition of a graph into connected components applies to bicolored graphs.

\begin{prop}
\label{prop-2colorconn}
The species $\mc{G}^c(X,Y)$ of connected bicolored graphs and the species $\mc{G}(X,Y)$ of bicolored graphs satisfy $$\mc{G}(X,Y)=\mc{E}\circ\mc{G}^c(X,Y).$$
\end{prop}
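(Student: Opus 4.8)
The plan is to mimic exactly the argument used to establish the single-sort identity $\mc{G} = \mc{E}\circ\mc{G}^c$ in equation~\eqref{eqn-conngraph}, lifting it to the $2$-sort setting. The underlying combinatorial fact is completely structural: every bicolored graph on a two-set $U=(W,B)$ decomposes canonically into its connected components, each of which is itself a bicolored graph (a connected subgraph inherits the coloring), and this decomposition is unique. So first I would observe that a $\mc{G}(X,Y)$-structure on $U=(W,B)$ is a graph on $W\sqcup B$ whose edges all join a white vertex to a black vertex, and that the connected components of such a graph partition the vertex set $W\sqcup B$ into blocks $U_i=(W_i,B_i)$, inducing a partition of $W$ and a partition of $B$ simultaneously.

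Next I would invoke the definition of composition of $2$-sort species: an $(\mc{E}\circ\mc{G}^c(X,Y))$-structure on $(W,B)$ consists of a partition $\pi$ of $W\sqcup B$ into nonempty blocks, together with a connected-bicolored-graph structure on each block. The map sending a bicolored graph to the pair consisting of (the partition into connected components, the family of induced connected bicolored graphs on the blocks) is the desired natural isomorphism. I would check that it is a bijection on each two-set $U$: it is injective because a bicolored graph is recovered from its components by taking the disjoint union, and surjective because any disjoint union of connected bicolored graphs on a partition of $(W,B)$ is again a bicolored graph (no new edges are created between blocks, so the coloring stays proper and no two distinct blocks become joined). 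Finally I would note that this family of bijections is compatible with the transport of structures along any multibijection $\sigma=(\tau_1,\tau_2)$, since relabeling vertices commutes with taking connected components — this is exactly the functoriality/naturality condition required for a combinatorial equality of $2$-sort species in the sense of~\cite[p.~21]{species-book}.

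I do not expect a genuine obstacle here; the statement is essentially ``connectedness decomposition works sort-by-sort,'' and the only things to be careful about are bookkeeping: that the empty bicolored graph corresponds to the empty partition (handled by the $\mc{E}$ rather than $\mc{E}_+$), and that ``nonempty block'' in a $2$-sort partition means the pair $(W_i,B_i)$ is not $(\varnothing,\varnothing)$, which is automatic for a connected component. If anything needs slightly more care, it is spelling out the $2$-sort version of $\mc{E}$ and of composition precisely enough to make ``replace each component by itself'' a legitimate $(\mc{E}\circ\mc{G}^c(X,Y))$-structure, but this is already implicit in the paper's earlier uses of multisort species and of the single-sort identity~\eqref{eqn-conngraph}, so the proof can be stated in a single short paragraph.
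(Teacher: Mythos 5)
Your proof is correct and is exactly the argument the paper intends: the paper offers only the one-sentence remark that ``the canonical decomposition of a graph into connected components applies to bicolored graphs'' and states the proposition without further proof, so your spelled-out version of the component decomposition in the $2$-sort setting is the same approach, just made explicit.
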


If follows that
\begin{equation}\label{eqn-2colorconn}
\mc{G}^c(X,Y)=(1+X)^c\circ\mc{G}_+(X,Y).
\end{equation}
Figure~\ref{f-2color4_conn} shows the unlabeled connected bicolored graphs with at most four vertices.
\begin{figure}[ht]
\begin{center}
\includegraphics[width=9cm]{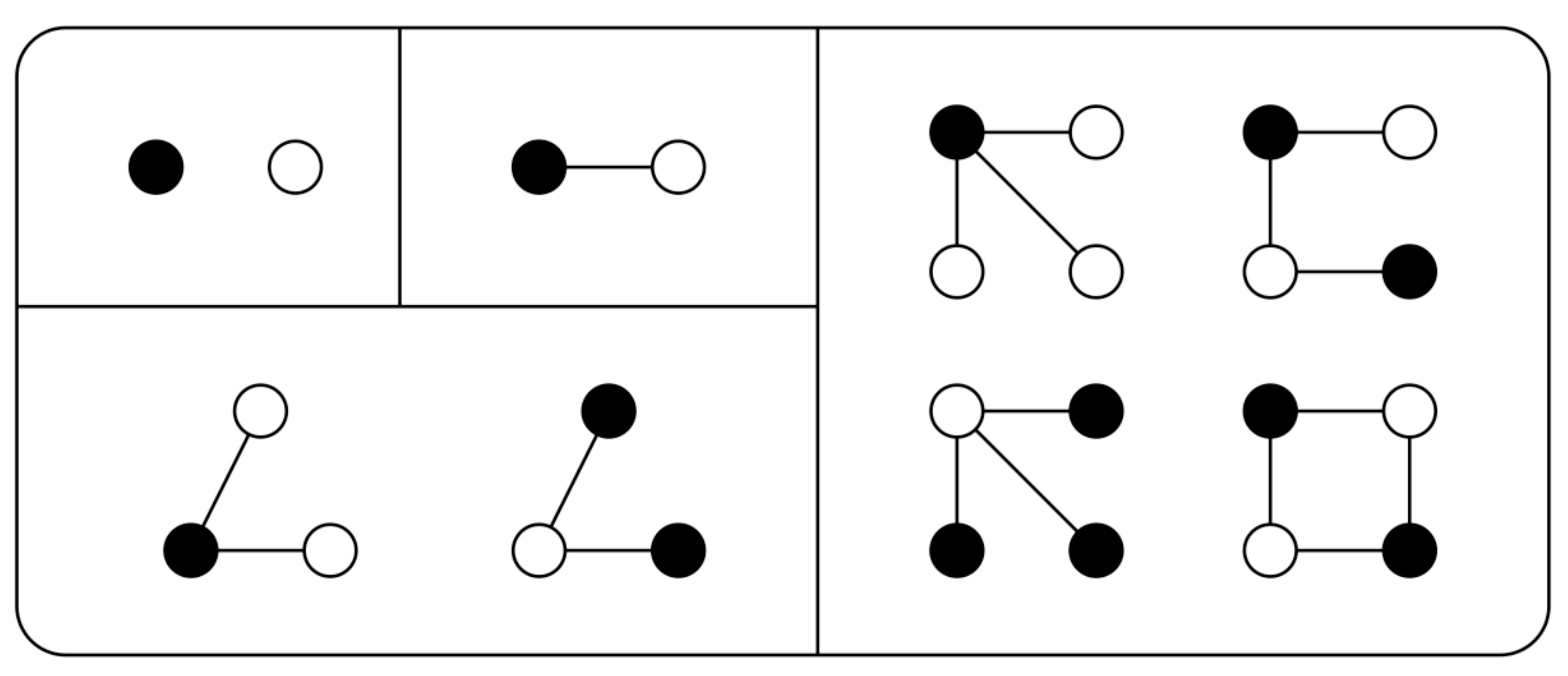}
\end{center}
\caption{\label{f-2color4_conn} Unlabeled connected bicolored graphs with $n$ vertices, $n\le 4$.}
\end{figure}

A bicolored graph is called \emph{point-determining} if the underlying graph is point-determining. A bicolored graph is called \emph{semi-point-determining} if all vertices of the same  color have distinct neighborhoods.  Note that the notion of co-point-determining bicolored graphs is not interesting, since  any two adjacent vertices  in a bicolored graph are colored differently, so that there is no vertex that could  be adjacent to both of them.

\begin{thm}
\label{thm-2colorpd}
For the species $\mc{P}(X,Y)$ of bicolored point-determining graphs, $\mc{P}^s(X,Y)$ of bicolored semi-point-determining graphs, and $\mc{P}^c(X,Y)$  of bicolored connected point-determining graphs, we have
\begin{align}
\mc{P}^s(X,Y) &=(1+X)(1+Y)\,\mc{E}\circ\mc{P}^c_{\ge 2}(X,Y), \label{formula_pd2c1}\\
\mc{P}(X,Y) &=(1+X+Y)\,\mc{E}\circ\mc{P}^c_{\ge 2}(X,Y). \label{formula_pd2c2}
\end{align}
\end{thm}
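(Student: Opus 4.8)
The plan is to imitate the structure of the proof of Theorem~\ref{thm-connpdcpd}, analyzing the connected-component structure of point-determining and semi-point-determining bicolored graphs. First I would observe that, as in the uncolored case, a point-determining bicolored graph can have at most one isolated vertex (an isolated vertex has empty neighborhood, and two such vertices would violate the point-determining condition regardless of their colors); moreover such an isolated vertex may be colored white or black, which accounts for the factor $1+X+Y$ in~\eqref{formula_pd2c2}. Every other connected component must be a connected point-determining bicolored graph with at least two vertices, and any unordered collection of such components is again point-determining. This yields $\mc{P}(X,Y)=(1+X+Y)\cdot\mc{E}\circ\mc{P}^c_{\ge 2}(X,Y)$, which is~\eqref{formula_pd2c2}.

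For~\eqref{formula_pd2c1}, I would argue that a bicolored graph is semi-point-determining if and only if no two white vertices share a neighborhood and no two black vertices share a neighborhood. As above, in any single connected component with at least two vertices these constraints are inherited from the point-determining condition on that component — wait, more care is needed here: a semi-point-determining graph need not have point-determining components, since two white vertices in different components both of which are "twins within their own component" could still have different neighborhoods globally. The key point is rather that an isolated white vertex forces there to be no other isolated white vertex, and similarly for black, but an isolated white vertex and an isolated black vertex may coexist (they have the same neighborhood, namely $\varnothing$, but being of different colors this does not violate semi-point-determinism). So a semi-point-determining bicolored graph decomposes as: at most one isolated white vertex (factor $1+X$), at most one isolated black vertex (factor $1+Y$), together with a set of non-isolated connected components. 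The remaining obstacle is to check that these non-isolated components are exactly the $\mc{P}^c_{\ge 2}(X,Y)$-structures, i.e., that a connected bicolored graph on at least two vertices is semi-point-determining if and only if it is point-determining; this holds because within a connected component two vertices of the same color with the same neighborhood would automatically be twins, and conversely two vertices with the same neighborhood in a connected graph on $\geq 2$ vertices must be nonadjacent and hence (since the only obstruction in the bipartition is color) of the same color. This identification gives $\mc{P}^s(X,Y)=(1+X)(1+Y)\cdot\mc{E}\circ\mc{P}^c_{\ge 2}(X,Y)$.

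The main obstacle I anticipate is precisely the subtlety flagged above: ensuring that the notion of "semi-point-determining" contributes nothing new at the level of individual non-trivial connected components, so that both formulas share the same factor $\mc{E}\circ\mc{P}^c_{\ge 2}(X,Y)$. Once that equivalence (for connected bicolored graphs on $\geq 2$ vertices, semi-point-determining $\iff$ point-determining) is established, both identities fall out by the same component-counting argument that proves Theorem~\ref{thm-connpdcpd}, with the only difference being the bookkeeping for isolated vertices: semi-point-determinism permits one isolated vertex of each color (factor $(1+X)(1+Y)$) while point-determinism permits a single isolated vertex of either color (factor $1+X+Y$). I would present the connected-component decomposition first, then verify the equivalence lemma, and conclude with the two product formulas.
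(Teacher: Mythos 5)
Your proposal follows essentially the same route as the paper: decompose by connected components, observe that every component on at least two vertices is a connected point-determining bicolored graph, and account for the isolated vertices by the factors $(1+X)(1+Y)$ and $1+X+Y$ respectively. Both product formulas and the overall decomposition are correct, and you in fact supply more justification for the key equivalence (semi-point-determining $\iff$ point-determining for a connected bicolored graph on at least two vertices) than the paper does, which simply asserts it. One justification is garbled, however: you claim that two vertices with the same neighborhood ``must be nonadjacent and hence \dots\ of the same color,'' but nonadjacency does not force equal colors in a bicolored graph (the two endpoints of a path on four vertices are nonadjacent and differently colored). The correct one-line argument is that if $N(u)=N(v)\neq\varnothing$, then any common neighbor $w$ is adjacent to both $u$ and $v$, so both must carry the color opposite to that of $w$; this is what makes the point-determining and semi-point-determining conditions coincide on nontrivial connected components. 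Also, the scenario you pause over --- vertices that are twins within their own components yet have different neighborhoods globally --- cannot occur, since a vertex's neighborhood lies entirely inside its component, so ``twin within the component'' and ``twin in the whole graph'' are the same condition. With those two small repairs your argument is complete and matches the paper's.
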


\begin{proof}
Let $K$ be a bicolored semi-point-determining graph.
We observe that a connected component of $K$ could be either a single vertex colored white, a single vertex colored black, or a bicolored connected point-determining graph with at least two vertices. At the same time, $K$ can have at most one isolated vertex colored with each color, due to the fact that all vertices in $K$ of the same color must have distinct neighborhoods. Equation~\eqref{formula_pd2c1} follows by translating the above into combinatorial equalities.
\begin{figure}[ht]
\begin{center}
\includegraphics[width=16cm]{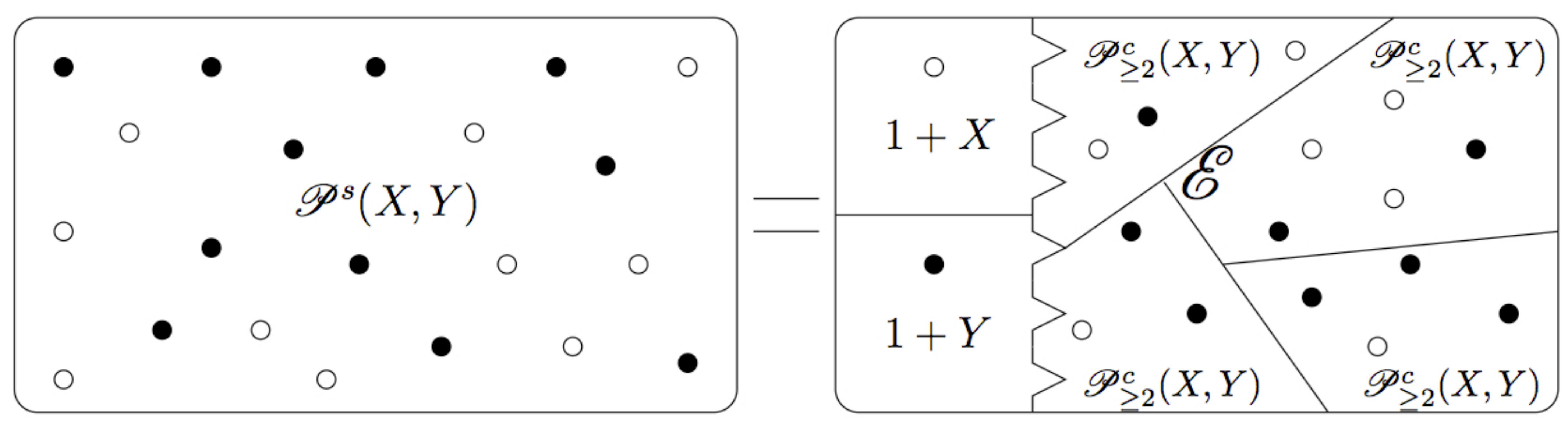}
\end{center}
\caption[\ \ The formula $\mc{P}^s(X,Y)=(1+X+Y)\,\mc{E}\circ\mc{P}^c_{\ge 2}(X,Y).$]{\label{f-bicolor_ppdconn}  $\mc{P}^s(X,Y)=(1+X+Y)\,\mc{E}\circ\mc{P}^c_{\ge 2}(X,Y).$}
\end{figure}

Let $H$ be a bicolored point-determining graph. As in the above discussion we see that a connected component of $H$ could be either a single vertex colored white, a single vertex colored black, or a bicolored connected point-determining graph with at least two vertices. But this time, since the underlying graph of $H$ is a point-determining graph, $H$ can have at most one isolated vertex. Hence the term $(1+X)(1+Y)$ in~\eqref{formula_pd2c1} is replaced with the term $1+X+Y$ in~\eqref{formula_pd2c2}.
\begin{figure}[ht]
\begin{center}
\includegraphics[width=16cm]{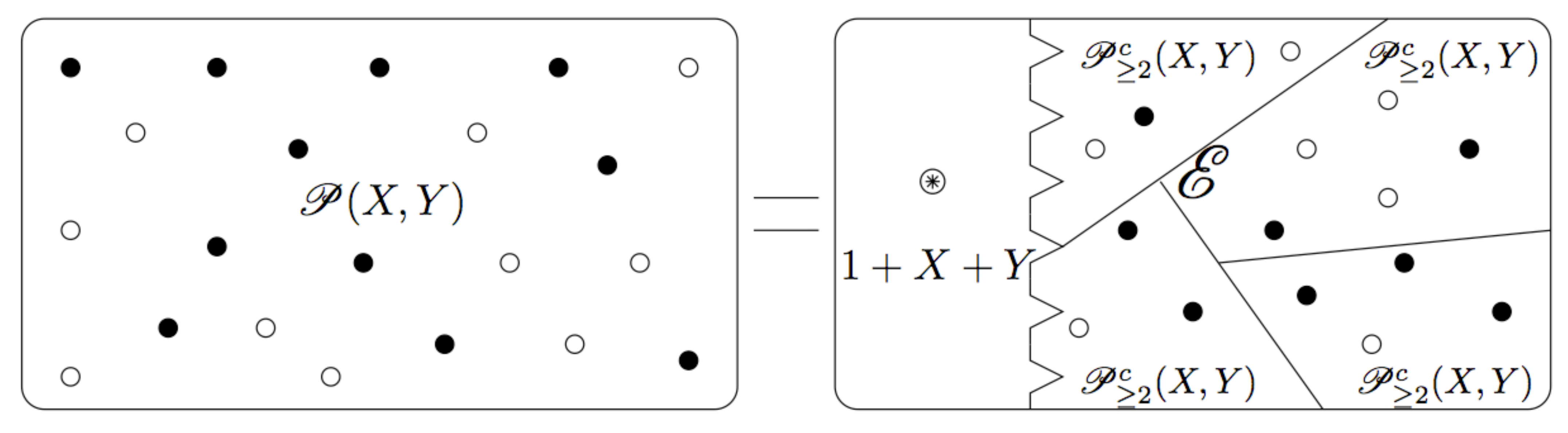}
\end{center}
\caption[\ \ The formula $\mc{P}(X,Y)=(1+X+Y)\,\mc{E}(\mc{P}^c_{\ge 2}(X,Y)).$]{\label{f-bicolor_pdconn}  $\mc{P}(X,Y)=(1+X+Y)\,\mc{E}(\mc{P}^c_{\ge 2}(X,Y)).$}
\end{figure}
\end{proof}

\begin{thm}
\label{thm-2colorps}
For the species $\mc{G}(X,Y)$ of bicolored graphs and $\mc{P}^s(X,Y)$ of bicolored semi-point-determining graphs, we have $$\mc{G}(X,Y)=\mc{P}^s(\mc{E}_+(X),\mc{E}_+(Y)).$$
\end{thm}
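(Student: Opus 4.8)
The plan is to mimic the structure of the proof of Theorem~\ref{thm-pdcpd}: show that every bicolored graph $K$ can be expressed \emph{uniquely} as a superimposition of a bicolored semi-point-determining graph on a set of monochromatic edgeless graphs, where the white vertices are grouped into white edgeless pieces and the black vertices into black edgeless pieces. Concretely, I would apply Lemma~\ref{lem-super} with $\Phi = \mc{P}^s(X,Y)$ and $\Psi = X\mc{E}_+(X) + Y\mc{E}_+(Y)$; wait — more precisely, the outer structure $\mc{P}^s$ acts on a ground set each of whose points is itself a nonempty edgeless monochromatic graph, so the statement $\mc{G}(X,Y) = \mc{P}^s(\mc{E}_+(X), \mc{E}_+(Y))$ is exactly the $2$-sort analogue of $\mc{G} = \mc{P}\circ\mc{E}_+$, with the substitution $\mc{E}_+(X)$ into the $X$-sort and $\mc{E}_+(Y)$ into the $Y$-sort.

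First I would define the relevant equivalence relation on $V(K)$: declare $u \sim v$ if $u$ and $v$ have the same color and the same neighborhood in $K$. Since $K$ is a bicolored graph, any two vertices with the same neighborhood are non-adjacent (the note following Definition~\ref{dfn-pdcpd}), so each equivalence class induces an edgeless subgraph; moreover each class is monochromatic by construction, so it is an $\mc{E}_+(X)$-structure or an $\mc{E}_+(Y)$-structure. Next I would form the quotient graph $P^s$ on the set of equivalence classes, joining two classes iff every vertex of one is adjacent to every vertex of the other in $K$ (equivalently, iff some — hence every — vertex of one is adjacent to some vertex of the other). Then $P^s$ inherits a bicoloring (the color of a class), it is a legitimately bicolored graph since adjacent classes have vertices of opposite colors hence opposite class-colors, and I claim $P^s$ is semi-point-determining: if two classes of the same color had the same neighborhood in $P^s$, then all their vertices would have the same neighborhood in $K$ and we would not have separated them. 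This gives $K = P^s|_{H_1,\dots,H_m}$ with the $H_i$ nonempty monochromatic edgeless, establishing existence.

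For uniqueness, I would argue as in Theorem~\ref{thm-pdcpd}: in any expression $K = G|_{H_1,\dots,H_m}$ with each $H_i$ nonempty monochromatic edgeless and $G$ bicolored semi-point-determining, each $V(H_i)$ lies inside a single $\sim$-class (the $H_i$ are edgeless and monochromatic so all their vertices share color and neighborhood in $K$), and two distinct $V(H_i), V(H_j)$ cannot lie in the same class — if they did and had the same color, they would have the same neighborhood in $G$, contradicting that $G$ is semi-point-determining; and if the two blocks have the same neighborhood in $K$ they necessarily have the same color, since a block's color is forced by whether it is adjacent to blocks of a given color. Hence the $V(H_i)$ are exactly the $\sim$-classes and $G$ is forced to be the quotient $P^s$ described above. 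Then Lemma~\ref{lem-super}, in its evident $2$-sort form, yields $\mc{G}(X,Y) = \mc{P}^s(X,Y) \circ (X\mc{E}_+(X) + Y\mc{E}_+(Y))$ — but substituting a sum of an $X$-structure-part and a $Y$-structure-part into the two sorts of $\mc{P}^s$ is precisely $\mc{P}^s(\mc{E}_+(X), \mc{E}_+(Y))$, since $X\mc{E}_+(X) = \mc{E}_+(X)$ reinterpreted as the "sort-$X$ atoms are nonempty edgeless white graphs" and similarly for $Y$.

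The main obstacle I anticipate is not any deep combinatorics but rather the bookkeeping of the $2$-sort composition: I must be careful that $\mc{P}^s(\mc{E}_+(X),\mc{E}_+(Y))$ really is the right $2$-sort composite — i.e., that substituting $\mc{E}_+(X)$ for sort $X$ means "each sort-$X$ place of a $\mc{P}^s$-structure is replaced by a nonempty set of sort-$X$ singletons" and that this matches the superimposition operation $\diamond$ on $2$-sort species of bicolored graphs. I would state explicitly (or invoke as the obvious $2$-sort extension of Lemma~\ref{lem-super}) that when $\Psi$ splits as $\Psi_X(X) + \Psi_Y(Y)$ with $\Psi_X$ supported on $X$-sorted sets and $\Psi_Y$ on $Y$-sorted sets, then $\Phi \circ (\Psi_X + \Psi_Y) = \Phi(\Psi_X, \Psi_Y)$ as $2$-sort species, so that the unique-superimposition argument above translates directly into the claimed identity. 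The rest is a routine transcription of the monochrome argument of Theorem~\ref{thm-pdcpd}, now performed one color at a time.
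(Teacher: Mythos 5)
Your proposal is correct and follows essentially the same route as the paper: both define the equivalence relation identifying same-colored vertices with equal neighborhoods, pass to the quotient bicolored graph on the equivalence classes, observe that it is semi-point-determining, and invoke the (two-sort analogue of the) superimposition-to-composition correspondence exactly as in Theorem~\ref{thm-pdcpd}. You simply spell out the uniqueness argument and the multisort substitution bookkeeping that the paper's proof dismisses as ``straightforward.''
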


\begin{proof}
The proof uses the same idea as the proof of Theorem~\ref{thm-pdcpd}. To be more precise, given any bicolored graph, we define equivalence relations on the vertex sets by setting two same-colored vertices to be equivalent if they have the same neighborhoods, and get a new bicolored graph whose vertex set is the set of equivalence classes and the adjacency in the original graph is accordingly preserved. We observe that the resulting new graph is a bicolored semi-point-determining graph, and the rest is straightforward.
\end{proof}

Recall the virtual species $(1+X)^c$, the compositional inverse of $\mc{E}_+$. Theorem~\ref{thm-2colorps} gives $$\mc{P}^s(X,Y)=\mc{G}((1+X)^c(X), (1+X)^c(Y)),$$ which, together with~\ref{formula_pd2c1} and~\ref{formula_pd2c2}, allows us to compute the associated series of the species $\mc{P}^s(X,Y)$, $\mc{P}(X,Y)$, and $\mc{P}^c(X,Y)$. Figure~\ref{f-2color5_pd} shows the unlabeled nonempty point-determining bicolored graphs with at most five vertices and at least one vertex of each color.
\begin{figure}[ht]
\begin{center}
\includegraphics[width=15cm]{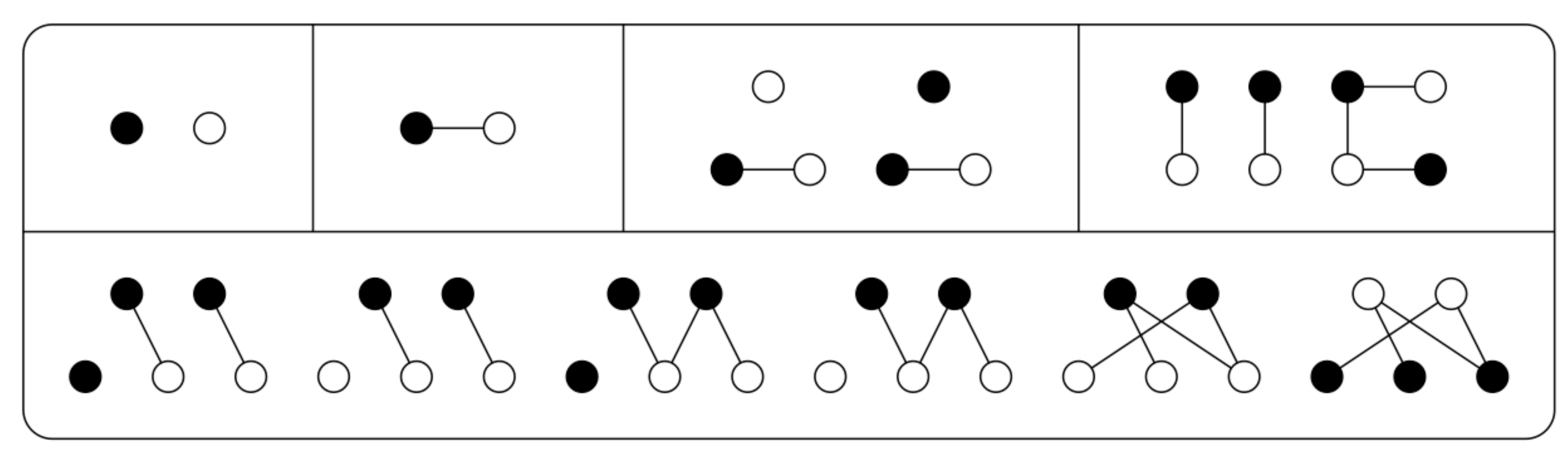}
\end{center}
\caption{\label{f-2color5_pd} Unlabeled nonempty point-determining bicolored graphs on $n$ vertices, $n\le 5$.}
\end{figure}

\appendix

\section{Index of Species}\label{appen-index}

\begin{tabular}[t]{ll}
$0$ &  empty species.\\[4pt]
$1$ &  characteristic of the empty set.\\[4pt]
$X$ &  species of singletons.\\[4pt]
$\mc{A}$ &  species of trees.\\[4pt]
$\mc{A}^r$ &  species of rooted trees.\\[4pt]
$\mc{B} / \mc{B}^c$ &  species of (connected) bi-point-determining graphs.\\[4pt]
$\mc{C} / \mc{C}^c$ &  species of (connected) cographs.\\[4pt]
$\mc{D}_n$ & molecular species of regular $n$-gons.\\[4pt]
$\mc{G} / \mc{G}^c$ &  species of (connected) simple graphs.\\[4pt]
$\mc{G}(X,Y) / \mc{G}^c(X,Y)$ &  $2$-sort species of (connected) bicolored graphs in which white vertices \\[2pt]
& are of sort $X$ and black vertices are of sort $Y$.\\[4pt]
$\mc{E}$ &  species of edgeless graphs.\\[4pt]
$\mc{K}$ &  species of complete graphs.\\[4pt]
$\mc{G}$ &  species of graphs.\\[4pt]
$\mc{G}(X,Y)$ &  $2$-sort species of bicolored graphs in which white vertices are of sort $X$\\[2pt]
                        & and black vertices are of sort $Y$.\\[4pt]
$(1+X)^c$ &  virtual species known as the compositional inverse of  $\mc{E}_+$. \\[4pt]
$\mc{H}(X,Y)$ & $2$-sort species of connected graphs in which vertices of degree one have sort $Y$  \\[2pt]
& and all other vertices have sort $X$.\\[4pt]
$\mc{M} / \mc{M}^c$ & species of (connected) graphs with no endpoints.\\[4pt]
$\mc{P} / \mc{P}^c$ &  species of (connected) point-determining graphs.\\[4pt]
$\mc{P}(X,Y)$ &  $2$-sort species of point-determining bicolored graphs.\\[4pt]
$\mc{P}^s(X,Y)$ &  $2$-sort species of semi-point-determining bicolored graphs.\\[4pt]
$\mc{P}^c(X,Y)$ &  $2$-sort species of connected
point-determining bicolored graphs.\\[4pt]
$\mc{Q} / \mc{Q}^c$ &  species of (connected) co-point-determining graphs.
\end{tabular}

\section{Cycle Indices and Molecular Decompositions}\label{appen-species}

In this section we give the first terms of the cycle indices and molecular decompositions of the species discussed in this paper. The cycle indices were computed with the help of  John Stembridge's SF package \cite{stembridge} for Maple. We have submitted the numbers of labeled and unlabeled structures to the Online Encyclopedia of Integer Sequences \cite{sloan} when they were not previously there, and thus rather than listing the numbers here, we give references to \cite{sloan}.

\subsection{The Species $\mc{P}$ of Point-Determining Graphs}

The numbers of labeled and unlabeled point-determining graphs are given in~\cite[A006024, A004110]{sloan}. Theorem~\ref{thm-pdcpd} allows us to compute the cycle index $Z_{\mc{P}}$ of point-determining graphs from the cycle index $Z_{\mc{G}}$ of graphs.
\begin{align*}
Z_{\mc{P}} &= 1+ p_1 + \biggl( \frac{1}{2}\, p_1^2+\,\frac{1}{2}\,p_2 \biggr) + \biggl(\frac{1}{3}\,p_3+p_1p_2+\,\frac{2}{3}\,p_1^3 + \,\frac{3}{2}\,p_1^2p_2 \biggr)   \notag\\
& \phantom{=} \ \,+ \biggl(\frac{1}{2}\,p_4+\,\frac{4}{3}\,p_1^4+p_2^2+\,\frac{2}{3}\,p_1p_3 \biggr) \notag\\
& \phantom{=} \ \, + \biggl( p_1^2 p_3+ \,\frac{49}{10}\, p_1^5+\,\frac{11}{3} \, p_1^3 p_2+\,\frac{1}{3}\,p_2p_3+\,\frac{3}{5}\,p_5+p_1p_4+\,\frac{9}{2}\,p_1p_2^2 \biggr) +\cdots.
\end{align*}

The molecular decomposition of $\mc{P}$ (see Figure~\ref{f-pd_molecular}) begins with
\begin{align*}
\mc{P} &= 1+X + \mc{E}_2 + ( X \cdot \mc{E}_2 + \mc{E}_3 )  + ( \mc{E}_2 \circ X^2 + X \cdot \mc{E}_3 + \mc{E}_2 \circ \mc{E}_2 +
  X^2 \cdot \mc{E}_2+ \mc{E}_4 )  \notag\\
  & \phantom{=}   + ( X\cdot \mc{E}_2\circ \mc{E}_2 + 5 X \cdot \mc{E}_2 \circ X^2  + 4 X^3\cdot \mc{E}_2 + X^2\cdot \mc{E}_3+ X \cdot \mc{E}_2 \cdot \mc{E}_2  + X\cdot \mc{E}_4 \notag\\
  & \phantom{=}  + \mc{D}_5 + \mc{E}_2\cdot \mc{E}_3   + \mc{E}_5 )+ \cdots,
\end{align*}
where $\mc{D}_5=X^5/D_5$ is the molecular species of  pentagons.

The numbers of labeled and unlabeled connected point-determining graphs are given in~\cite[A092430, A004108]{sloan}. Theorem~\ref{thm-connpdcpd} allows us to compute $Z_{\mc{Q}^c}$ and $Z_{\mc{P}^c}$ from $Z_{\mc{P}}$:
\begin{align*}
Z_{\mc{Q}^c} &=  p_1 + \biggl( \frac{1}{2}\, p_1p_2+ \,\frac{1}{2} \, p_1^3 \biggr)+\biggl( \frac{19}{24}\, p_1^4+\,\frac{3}{4}\, p_1^2p_2+\,\frac{1}{3}\,p_1p_3+\,\frac{7}{8}\,p_2^2+\,\frac{1}{4}\,p_4 \biggr)  \notag\\
& \phantom{=} \ \, +\biggl( \frac{7}{3}\,p_1^3p_2+\,\frac{1}{6}\,p_2p_3+\,\frac{77}{20}\,p_1^5+\,\frac{13}{4}\,p_1p_2^2+\,\frac{1}{2} \,p_1^2p_3+
\,\frac{2}{5}\,p_5+\,\frac{1}{2}\,p_1p_4 \biggr)+ \cdots.\\
Z_{\mc{P}^c} &= p_1+\biggl( \frac{1}{2}\,p_1^2+ \,\frac{1}{2}\,p_2  \biggr) +\biggl( \frac{1}{6}\,p_1^3+\,\frac{1}{3}\,p_3+\,\frac{1}{2}\,p_1p_2 \biggr)  \notag\\
& \phantom{=} \ \, +\biggl( \frac{25}{24}\,p_1^4+\,\frac{5}{8}\,p_2^2+\,\frac{3}{4}\,p_1^2p_2+\,\frac{1}{3}\,p_1p_3+\,\frac{1}{4}\,p_4 + \,\frac{1}{2}\,p_1^2p_3+\,\frac{13}{4}\,p_1p_2^2 \biggr) \notag\\
& \phantom{=}\ \, +\biggl( \frac{7}{3}\,p_1^3p_2+\,\frac{3}{5}\,p_5+\,\frac{73}{20}\,p_1^5+\,\frac{1}{6}\,p_2p_3+\,\frac{1}{2}\,p_1p_4 \biggr) + \cdots. 
\end{align*}

\subsection{The Species $\mc{M}$ of Graphs without Endpoints}

The numbers of labeled and unlabeled graphs without endpoints are given in~\cite[A059166, A004108]{sloan}. The equation $\mc{M}=\mc{E}\circ \mc{M}^c$ allows us to compute $Z_{\mc{M}}$:
\begin{align*}
Z_{\mc{M}} &=1+p_1+\biggl(\frac{1}{2}\,p_1^2+\,\frac{1}{2}\,p_2\biggr)+\biggl(\frac{1}{3}\,p_1^3+p_1p_2+\frac{2}{3}\,p_3\biggr)\\
&\phantom{=} +\biggl(\frac{13}{24}\,p_1^4 +\,\frac{7}{4}\,p_1^2p_2+\,\frac{4}{3}\,p_1p_3+\,\frac{5}{8}\,p_2^2+\,\frac{3}{4}\,p_4 \biggr)+\cdots
\end{align*}

The molecular decomposition of the species $\mc{M}$ begins with $$M= 1+X+\mc{E}_2 + 2\mc{E}_3 + (2X\mc{E}_3+2\mc{E}_4+\mc{D}_4)+\cdots,$$ where $\mc{D}_4=X^4/D_4$ is the molecular species of squares.

The numbers of labeled and unlabeled connected graphs without endpoints are given in~\cite[A059166, A004108]{sloan}. Equation~\eqref{eqn-ira} allows us to compute $Z_{\mc{M}^c}$:
\begin{align*}
Z_{\mc{M}^c} &= p_1 + \biggl( \frac{1}{6}\,p_1^3 +\,\frac{1}{2}\,p_1p_2 +\, \frac{1}{3}\, p_3\biggr)
+\biggl( \frac{5}{12}\,p_1^4 + p_1^2p_2+\,\frac{1}{3}\,p_1p_3 +\,\frac{3}{4}\,p_2^2+\,\frac{1}{2}\,p_4 \biggr) \notag\\
& \phantom{=} + \biggl( \frac{253}{120}\,p_1^5 + \,\frac{31}{12}\,p_1^3p_2
+\,\frac{2}{3}\,p_1^2p_3+\,\frac{29}{8}\,p_1p_2^2+\,\frac{3}{4}\,p_1p_4+\,\frac{2}{3}\,p_2p_3+\,\frac{3}{5}\,p_5 \biggr)+\cdots.
\end{align*}

\subsection{The Species $\mc{C}$ of Cographs}

The numbers of labeled and unlabeled cographs are given in~\cite[A006351, A000084]{sloan}. Lemma~\ref{lem-cograph} gives a way to compute the cycle index $Z_{\mc{C}}$ of cographs recursively:
\begin{align*}
Z_{\mc{C}} &= p_1+(p_1^2+p_2)+\biggl(\frac{4}{3}\,p_1^3+2p_1p_2+ \frac{2}{3}\,p_3\biggr) \notag\\
& \phantom{=}\ \, +\biggl(\frac{13}{6}\,p_1^4+2p_1^2p_2+ \frac{4}{3}\,p_1p_3+ p_2^2+p_4 \biggr) \notag\\
& \phantom{=}\ \, +\biggl(\frac{59}{15}\,p_1^5+\frac{8}{3}\,p_1^2p_3+5p_1p_2^2+\frac{26}{3}\,p_1^3p_2+ 2p_1 p_4 + \frac{4}{3}\,p_2p_3+\frac{2}{5}\,p_5\biggr)\notag\\
& \phantom{=} \ \, +\biggl( \frac{344}{45}\,p_1^6+  \frac{4}{5}\,p_1 p_5+  \frac{59}{3}\,p_1^4 p_2+\frac{52}{9}\,p_1^3 p_3+ 15p_1^2 p_2^2+    4p_1^2p_4+  \frac{16}{3}\,p_1p_2p_3  \biggr.\notag\\
& \phantom{=} \ \,  \biggl.  + 3p_2p_4 + p_6
\biggr) + \cdots.
\end{align*}

\subsection{The Species $\mc{B}$ of Bi-Point-Determining Graphs}

The numbers of labeled and unlabeled bi-point-determining graphs are given in~\cite[A129583, A129584]{sloan}. We obtain from Corollary~\ref{cor-enumbipd} the cycle index $Z_{\mc{B}}$ of bi-point-determining graphs from the cycle index $Z_{\mc{G}}$ of graphs.
\begin{align*}
Z_{\mc{B}}  &=  p_1 + \biggl( \frac{1}{2}\,p_1^4 + \frac{1}{2}\,p_2^2\biggr) +
 \biggl( \frac{13}{5}\,p_1^5+3p_1p_2^2+\frac{2}{5}\,p_5 \biggr) \notag \\
  & \phantom{=}\ + \biggl( \frac{96}{5}\, p_1^6 + 11 p_1^2p_2^2 + \frac{4}{5}\,p_1p_5
  + \frac{11}{3}\, p_2^3 + p_3^2 + \frac{1}{3}\, p_6  \biggr) +
  \cdots.
\end{align*}
Since the automorphism groups of bi-point-determining graphs contain no 2-, 3-, or 4-cycles, the cycle index  $Z_{\mc{B}}$ contains no terms of the form $p_1^n p_2$, $p_1^np_3$, or $p_1^n p_4$.

The molecular decomposition of $\mc{B}$ (see Figure~\ref{f-bipd_unlabel}) begins with
\begin{align*}
\mc{B} &= X + \mc{E}_2 \circ X^2 +( 5 X \cdot (\mc{E}_2 \circ X^2) + \mc{D}_5 ) + \cdots.
\end{align*}

The numbers of labeled and unlabeled connected bi-point-determining graphs are give in~\cite[A129585, A129586]{sloan}. We obtain the cycle index $Z_{\mc{B}^c}$ from Corollary~\ref{cor-connbipd}.
\begin{align*}
Z_{\mc{B}^c}&=p_1+\biggl(\frac{1}{2}\,p_1^4+\frac{1}{2}p_2^2\biggr)
+\biggl(\frac{21}{10}\,p_1^5+\frac{5}{2}\,p_1p_2^2+\frac{2}{5}p_5\biggr)\notag\\
&\phantom{=}\ +\biggl(\frac{17}{10}\,p_1^6+\frac{17}{2}\,p_1^2p_2^2
+\frac{2}{5}\,p_1p_5+\frac{11}{3}\,p_2^3+p_3^2\frac{1}{3}\,p_6\biggr)
+\cdots.
\end{align*}

\subsection{The Species $\mc{G}(X,Y)$ of bicolored Graphs}

Theorem~\ref{thm-2color} enables us to calculate the associated series of $\mc{G}[X,Y]$.
\begin{align}
\mc{G}(x,y)&=1+\,\frac{x}{1!}+\,\frac{x^2}{2!}+\,\frac{x^3}{3!}+\,\frac{y}{1!}+\,\frac{y^2}{2!}+\,\frac{y^3}{3!}+2\,\frac{x}{1!}\,\frac{y}{1!}+4\,\frac{x^2}{2!}\,\frac{y}{1!}+4\,\frac{x}{1!}\,\frac{y^2}{2!}+\cdots,\label{eqn-2color-label}\\
\widetilde{\mc{G}}(x,y)&=1+x+y+2xy+x^2+y^2+3x^2y+3xy^2+x^3+y^3+\cdots,\label{eqn-2color-unlabel}\\
Z_{\mc{G}(X,Y)}&=1+(p_1[\textbf{x}]+p_1[\textbf{y}])+\biggl(\,\frac{1}{2}\,p_1^2[\textbf{x}]+\,\frac{1}{2}\,p_2[\textbf{x}]+2p_1[\textbf{x}]p_1[\textbf{y}]+\,\frac{1}{2}\,p_2[\textbf{y}]+\,\frac{1}{2}\,p_1^2[\textbf{y}]\biggr)\notag\\
&\phantom{=}\ +\biggl(\,\frac{1}{6}\,p_1^3[\textbf{x}]+\,\frac{1}{2}\,p_1[\textbf{x}]p_2[\textbf{x}]+\,\frac{1}{3}\,p_3[\textbf{x}]+p_2[\textbf{x}]p_1[\textbf{y}]+2p_1^2[\textbf{x}]p_1[\textbf{y}]\biggr.\notag\\
&\phantom{=}\ \,\biggl.+2p_1[\textbf{x}]p_1^2[\textbf{y}]+p_1[\textbf{x}]p_2[\textbf{y}]+\,\frac{1}{3}\,p_3[\textbf{y}]+\,\frac{1}{2}\,p_1[\textbf{y}]p_2[\textbf{y}]+\,\frac{1}{6}\,p_1^3[\textbf{y}]\biggr)+\cdots.\notag
\end{align}

Equation~\eqref{eqn-2colorconn} enables us to compute the associated series of ${\mc{G}^c}[X,Y]$:
\begin{align}
{\mc{G}^c}(x,y)&=\,\frac{x}{1!}+\,\frac{y}{1!}+\,\frac{x}{1!}\,\frac{y}{1!}+\,\frac{x^2}{2!}\,\frac{y}{1!}+\,\frac{x}{1!}\,\frac{y^2}{2!}+\,\frac{x^3}{3!}\,\frac{y}{1!}+5\,\frac{x^2}{2!}\,\frac{y^2}{2!}+\,\frac{x}{1!}\,\frac{y^3}{3!}+\cdots,\notag\\
\widetilde{{\mc{G}^c}}(x,y)&=x+y+xy+xy^2+x^2y+x^3y+xy^3+2x^2y^2+x^4y+4x^3y^2+4x^2y^3\notag\\
&\phantom{=}\ \,+xy^4+\cdots,\label{eqn-2colorconn-unlabel}\\
Z_{{\mc{G}^c}(X,Y)}&=(p_1[\textbf{x}]+p_1[\textbf{y}])+p_1[\textbf{x}]p_1[\textbf{y}]+\biggl(\frac{1}{2}\,p_1^2[\textbf{x}]p_1[\textbf{y}]+\,\frac{1}{2}\,p_1[\textbf{x}]p_1^2[\textbf{y}]+\,\frac{1}{2}\,p_2[\textbf{x}]p_1[\textbf{y}]\biggr.\notag\\
&\phantom{=}\ \,+\biggl.\,\frac{1}{2}\,p_1[\textbf{x}]p_2[\textbf{y}]\biggr)+\cdots.\notag
\end{align}

The molecular decomposition of the $2$-sort species $\mc{G}(X,Y)$ begins with
\begin{align}
\mc{G}(X,Y)&=1+(X+Y)+\big[\mc{E}_2(X)+\mc{E}_2(Y)+2X\cdot Y\big]+\big[\mc{E}_3(X)+\mc{E}_3(Y)+X\cdot Y^2\notag\\
&\phantom{=}\ \,+X^2\cdot Y+2X\cdot \mc{E}_2(Y)+2\mc{E}_2(X)\cdot Y\big]+\cdots.\notag
\end{align}

The molecular decomposition of the $2$-sort species $\mc{G}^c(X,Y)$ begins with (see Figure~\ref{f-2color4_conn}):
\begin{align}
\mc{G}^c(X,Y) &= (X + Y) + X\cdot  Y + [X\cdot  \mc{E}_2(Y)+Y\cdot  \mc{E}_2(X)] \notag\\
&\phantom{=}\ \, + [X\cdot  \mc{E}_3(Y)+Y\cdot  \mc{E}_3(X)+X^2\cdot  Y^2 +\mc{E}_2(X)\cdot \mc{E}_2(Y)] +\cdots. \notag
\end{align}

\subsection{The Species $\mc{P}(X,Y)$ of bicolored point-determining graphs}

We obtain from Theorems~\ref{thm-2colorps} and~\ref{thm-2colorpd} the associated series for $\mc{P}^s(X, Y), \mc{P}^c(X, Y)$ and $\mc{P}(X, Y)$:
\begin{align}
Z_{\mc{P}^s(X,Y)}&=1+(p_1[\textbf{x}]+p_1[\textbf{y}])+(2p_1[\textbf{x}]p_1[\textbf{y}])+(p_1^2[\textbf{x}]p_1[\textbf{y}]+p_1[\textbf{x}]p_1^2[\textbf{y}])\notag\\
&\phantom{=}\ \,+\biggl(\frac{1}{2}\,p_2[\textbf{x}]p_2[\textbf{y}]+\,\frac{5}{2}\,p_1^2[\textbf{x}]p_1^2[\textbf{y}]\biggr)\notag\\
&\phantom{=}\ \,+(p_1p_2[\textbf{x}]p_2[\textbf{y}]+p_2[\textbf{x}]p_1p_2[\textbf{y}]+2p_1^3[\textbf{x}]p_1^2[\textbf{y}]+2p_1^2[\textbf{x}]p_1^3[\textbf{y}])+\cdots.\notag\\
Z_{\mc{P}^c(X,Y)}&=(p_1[\textbf{x}]+p_1[\textbf{y}])+(p_1[\textbf{x}]p_1[\textbf{y}])+(p_1^2[\textbf{x}]p_1^2[\textbf{y}])\notag\\
&\phantom{=}\ \,+\biggl(\frac{1}{2}\,p_1p_2[\textbf{x}]p_2[\textbf{y}]+\frac{1}{2}\,p_2[\textbf{x}]p_1p_2[\textbf{y}]+\frac{1}{2}\,p_1^3[\textbf{x}]p_1^2[\textbf{y}]+\frac{1}{2}\,p_1^2[\textbf{x}]p_1^3[\textbf{y}]\biggr)+\cdots.\notag
\end{align}
\begin{align}
\mc{P}(x,y)&=1+\,\frac{x}{1!}\,+\,\frac{y}{1!}\,+\,\frac{x}{1!}\,\frac{y}{1!}\,+2\,\frac{x}{1!}\,\frac{y^2}{2!}\,+2\,\frac{x^2}{2!}\,\frac{y}{1!}\,+6\,\frac{x^2}{2!}\,\frac{y^2}{2!}\,+24\,\frac{x^2}{2!}\,\frac{y^3}{3!}\,+24\,\frac{x^3}{3!}\,\frac{y^2}{2!}\,+\cdots,\notag\\
\widetilde{\mc{P}}(x,y)&=1+x+y+xy+x^2y+xy^2+2x^2y^2+3x^3y^2+3x^2y^3+\cdots,\notag\\
Z_{\mc{P}(X,Y)}&=1+(p_1[\textbf{x}]+p_1[\textbf{y}])+(p_1[\textbf{x}]p_1[\textbf{y}])+(p_1^2[\textbf{x}]p_1[\textbf{y}]+p_1[\textbf{x}]p_1^2[\textbf{y}])\notag\\
&\phantom{=}\ \,+\biggl(\frac{1}{2}\,p_2[\textbf{x}]p_2[\textbf{y}]+\,\frac{3}{2}\,p_1^2[\textbf{x}]p_1^2[\textbf{y}]\biggr)\notag\\
&\phantom{=}\ \,+(p_1p_2[\textbf{x}]p_2[\textbf{y}]+p_2[\textbf{x}]p_1p_2[\textbf{y}]+2p_1^3[\textbf{x}]p_1^2[\textbf{y}]+2p_1^2[\textbf{x}]p_1^3[\textbf{y}])+\cdots.\notag
\end{align}

The beginning terms of the molecular decomposition of $\mc{P}(X,Y)$ are (see Figure~\ref{f-2color5_pd}):
\begin{align}
\mc{P}(X,Y) &= 1+(X+Y)+ X\cdot  Y + (X^2\cdot  Y+X\cdot  Y^2) + \big[ \mc{E}_2(X)\cdot  \mc{E}_2(Y) + X^2 \cdot  Y^2 \big] \notag\\
& \phantom{=} \ \, + \big[(X+Y)\cdot  \mc{E}_2(X)\cdot  \mc{E}_2(Y) + (X+Y)\cdot  X^2 \cdot  Y^2 + X^3\cdot  Y^2 \big. \notag\\
& \phantom{=}\ \, \big. +X^2\cdot  Y^3 \big]+\cdots. \notag
\end{align}

\bibliographystyle{gessel} 
\bibliography{pointd}

\begin{thebibliography}{10}

\bibitem{species-book}
F.~Bergeron, G.~Labelle, and P.~Leroux.
\newblock {\em {C}ombinatorial {S}pecies and {T}ree-{L}ike {S}tructures\/},
  vol.~67 of {\em Encyclopedia of Mathematics and its Applications\/}.
\newblock Cambridge University Press, Cambridge, 1998.
\newblock Translated from the 1994 French original by Margaret Readdy.

\bibitem{bull}
J.~J. Bull and C.~M. Pease.
\newblock Combinatorics and variety of mating-type systems.
\newblock {\em Evolution\/} {\bf 43} (1989), 667--671.

\bibitem{corneil}
D.~G. Corneil, H.~Lerchs, and L.~S. Burlingham.
\newblock Complement reducible graphs.
\newblock {\em Discrete Appl. Math.\/} {\bf 3} (1981), 163--174.

\bibitem{phylogenetic1}
L.~R. Foulds and R.~W. Robinson.
\newblock Enumeration of phylogenetic trees without points of degree two.
\newblock {\em Ars Combin.\/} {\bf 17} (1984), no.~A, 169--183.

\bibitem{goulden}
I.~P. Goulden and D.~M. Jackson.
\newblock Labelled graphs with vertices of degree at least three.
\newblock {\em J. Graph Theory\/} {\bf 11} (1987), no.~2, 177--189.

\bibitem{guruswami}
V.~Guruswami.
\newblock Enumerative aspects of certain subclasses of perfect graphs.
\newblock {\em Discrete Math.\/} {\bf 205} (1999), 97--117.

\bibitem{hanlon}
P.~Hanlon.
\newblock The enumeration of bipartite graphs.
\newblock {\em Disc. Math.\/} {\bf 28} (1979), 49--57.

\bibitem{harary0}
F.~Harary.
\newblock On the number of bi-colored graphs.
\newblock {\em Pacific J. Math.\/} {\bf 8} (1958), 743--755.

\bibitem{harary}
F.~Harary and E.~M. Palmer.
\newblock {\em {G}raphical {E}numeration\/}.
\newblock Academic Press, New York, 1973.

\bibitem{harary5}
F.~Harary and G.~Prins.
\newblock Enumeration of bicolourable graphs.
\newblock {\em Canad. J. Math.\/} {\bf 15} (1963), 237--248.

\bibitem{joyal}
A.~Joyal.
\newblock Une th\'{e}orie combinatoire des s\'{e}ries formelles.
\newblock {\em Adv. in Math.\/} {\bf 42} (1981), 1--82.

\bibitem{joyal2}
A.~Joyal.
\newblock Foncteurs analytiques et esp\`{e}ces de structures.
\newblock In G.~Labelle and P.~Leroux (eds.), {\em Combinatoire \'enum\'erative
  ({M}ontreal, {Q}ue., 1985/{Q}uebec, {Q}ue., 1985)\/}, vol. 1234 of {\em
  Lecture Notes in Math.\/}, pp. 126--159. Springer-Verlag, Berlin, Heidelberg,
  and New York, 1986.

\bibitem{kilibarda}
G.~Kilibarda.
\newblock Enumeration of unlabelled mating graphs.
\newblock {\em Graphs and Combinatorics\/} {\bf 23} (2007), 183--199.

\bibitem{newman}
M.~H.~A. Newman.
\newblock On theories with a combinatorial definition of ``equivalence''.
\newblock {\em Ann. of Math. (2)\/} {\bf 43} (1942), 223--243.

\bibitem{read1}
R.~C. Read.
\newblock The enumeration of mating-type graphs.
\newblock Research Report CORR~38, Department of Combinatorics and
  Optimization, University of Waterloo, 1989.

\bibitem{Robinson-nonseparable}
R.~W. Robinson.
\newblock Enumeration of non-separable graphs.
\newblock {\em J. Combinatorial Theory\/} {\bf 9} (1970), 327--356.

\bibitem{schroder}
E.~Schr\"oder.
\newblock Vier combinatorische {P}robleme.
\newblock {\em Z. Math. Physik\/} {\bf 15} (1870), 361--376.

\bibitem{sloan}
N.~Sloane.
\newblock {T}he {O}n-{L}ine {E}ncyclopedia of {I}nteger {S}equences.
\newblock \url{http://www.research.att.com/~njas}.

\bibitem{ec2}
R.~P. Stanley.
\newblock {\em {E}numerative {C}ombinatorics. {V}ol. 2\/}.
\newblock Cambridge University Press, Cambridge, 1999.

\bibitem{stembridge}
J.~R. Stembridge.
\newblock A maple package for symmetric functions.
\newblock \url{http://www.math.lsa.umich.edu/~jrs/maple.html}, 2005.

\bibitem{sumner}
D.~P. Sumner.
\newblock Point determination in graphs.
\newblock {\em Disc. Math.\/} {\bf 5} (1973), 179--187.

\bibitem{wright}
E.~M. Wright.
\newblock Enumeration of smooth labelled graphs.
\newblock {\em Proc. Roy. Soc. Edinburgh Sect. A\/} {\bf 91} (1981/82),
  205--212.

\bibitem{yeh1}
Y.~N. Yeh.
\newblock {\em {O}n the {C}ombinatorial {S}pecies of {J}oyal\/}.
\newblock Ph.D. thesis, State University of New York at Buffalo, 1985.

\bibitem{yeh2}
Y.~N. Yeh.
\newblock The calculus of virtual species and $\mathbb{K}$-species.
\newblock In G.~Labelle and P.~Leroux (eds.), {\em Combinatoire \'enum\'erative
  ({M}ontreal, {Q}ue., 1985/{Q}uebec, {Q}ue., 1985)\/}, vol. 1234 of {\em
  Lecture Notes in Math.\/}, pp. 351--369. Springer-Verlag, Berlin, Heidelberg,
  and New York, 1986.

\end{thebibliography}
\end{document}